\def\tr{\mathop{\hbox{tr}}}
\def\id{\mathop{\hbox{id}}}
\def\i{{\bf i}}
\def\Span{\mathop{\hbox{span}}}
\def\Pol{\mathrm{Pol}}
\def\fB{\mathcal{B}}
\def\fH{\mathcal{H}}
\def\fF{\mathcal{F}}
\def\Z{\mathbb{Z}}
\def\C{\mathbb{C}}
\def\H{\mathbb{H}}
\def\G{\mathbb{G}}
\def\wG{\widehat{G}}
\DeclareRobustCommand\widecheck[1]{{\mathpalette\@widecheck{#1}}}
\def\@widecheck#1#2{%
    \setbox\z@\hbox{\m@th$#1#2$}%
    \setbox\tw@\hbox{\m@th$#1%
       \widehat{%
          \vrule\@width\z@\@height\ht\z@
          \vrule\@height\z@\@width\wd\z@}$}%
    \dp\tw@-\ht\z@
    \@tempdima\ht\z@ \advance\@tempdima2\ht\tw@ \divide\@tempdima\thr@@
    \setbox\tw@\hbox{%
       \raise\@tempdima\hbox{\scalebox{1}[-1]{\lower\@tempdima\box
\tw@}}}%
    {\ooalign{\box\tw@ \cr \box\z@}}}
\theoremstyle{plain}
\newtheorem{thm}{Theorem}[section]
\newtheorem{lem}[thm]{Lemma}
\newtheorem{prop}[thm]{Proposition}
\newtheorem{cor}[thm]{Corollary}
\newtheorem{ques}[thm]{Question}
\theoremstyle{definition}
\newtheorem{defn}[thm]{Definition}
\newtheorem{rem}[thm]{Remark}
\newcommand*{\toccontents}{\@starttoc{toc}}
\titlespacing*{\chapter}{0pt}{3.5ex plus 1ex minus .2ex}{2.3ex plus .2ex}
\pgfplotsset{compat=1.17}
\begin{document}
\title{On Amenable and Coamenable Coideals}

\author{Benjamin Anderson-Sackaney}
\maketitle

\begin{abstract}
    We study relative amenability and amenability of a right coideal $\widetilde{N}_P\subseteq \ell^\infty(\mathbb{G})$ of a discrete quantum group in terms of its group-like projection $P$. We establish a notion of a $P$-left invariant state and use it to characterize relative amenability. We also develop a notion of coamenability of a compact quasi-subgroup $N_\omega\subseteq L^\infty(\widehat{\mathbb{G}})$ that generalizes coamenability of a quotient as defined by Kalantar, Kasprzak, Skalski, and Vergnioux, where $\widehat{\mathbb{G}}$ is the compact dual of $\mathbb{G}$. In particular, we establish that the coamenable compact quasi-subgroups of $\widehat{\mathbb{G}}$ are in one-to-one correspondence with the idempotent states on the reduced $C^*$-algebra $C_r(\widehat{\mathbb{G}})$. We use this work to obtain results for the duality between relative amenability and amenability of coideals in $\ell^\infty(\mathbb{G})$ and coamenability of their codual coideals in $L^\infty(\widehat{\mathbb{G}})$, making progress towards a question of Kalantar et al{.}.
\end{abstract}

{\flushleft\bf Acknowledgements}: I thank Nico Spronk, Brian Forrest, and Michael Brannan for the supervision and support over the fruition of this project. I would like to thank Nico and Michael especially for their careful reading of this article and helpful comments. I would also like to thank Nicholas Manor for useful discussions at the onset of this project, and especially for introducing me to relative amenability of subgroups. The author was supported by a QEII-GSST scholarship and the ANR project ANR-19-CE40-0002. This work was completed as part of the doctoral thesis of the author.

\begin{center}
\bfseries{Contents}
\end{center}
\toccontents
\section{Introduction}
Understanding the tracial states of $C^*$-algebras and simplicity of $C^*$-algebras are problems of interest to operator algebraists (eg. in classification theory). For a discrete group $G$, whenever $C_r(\widehat{G})$ has the unique trace property (which would be the Haar state), then the traces are well-understood: they are comprised of the Haar state alone. When studying these properties of the reduced $C^*$-algebras of groups, an important class of traces to consider are the indicator functions (which are idempotent states) $1_N\in C_r(\widehat{G})^*\subseteq \ell^\infty(G)$, where $N$ is a normal and amenable subgroup of $G$. Besides the Haar state, a distinguished example is $1_{R_a(G)}$ where $R_a(G)$ is the {\bf amenable radical} of $G$, the largest amenable normal subgroup. More precisely, it was achieved in \cite{BKKO17, KK14} that $C_r(\widehat{G})$ has a unique trace if and only if $R_a(G) = \{e\}$ and if $C_r(\widehat{G})$ is simple, then it has a unique trace.

More generally, the idempotent states on the universal $C^*$-algebra $C_u(\widehat{G})$ are exactly those of the form $1_H$ where $H$ is a subgroup of $G$. The traces are those idempotent states $1_H$ where $H$ is normal. The universal idempotent states of a locally compact quantum group $\G$ (the idempotent states on the universal $C^*$-algebra $C_u(\widehat{\G})$) have received a lot of attention in the literature lately (see \cite{SS11, SS16, Soltan, KK17, FST13, K18}). More specifically, their connection to certain group-like projections and compact quasi-subgroups has been established. In particular, the compact quasi-subgroups of a locally compact quantum group $\G$ are in one-to-one correspondence with the universal idempotent states on (see Section $4.1$ for more on compact quasi-subgroups of compact quantum groups). As far as we can tell, aside from the results in \cite{Soltan} concerning normal idempotent states (normal idempotent states on $L^\infty(\G)$), the reduced idempotent states (idempotent states on the reduced $C^*$-algebra $C_r(\G)$) have been left untouched.

Kalantar, Kasprzak, Skalski, and Vergnioux \cite{KKSV22} coined the notion of a coamenable right coideal of quotient type for compact quantum groups. This notion generalizes to compact quasi-subgroups, which we prove has the following characterization.\\
~\\
{\bf Theorem \ref{reducedIdempState}} {\it Let $\G$ be a discrete quantum group and $N_\omega$ a compact quasi-subgroup of $\widehat{\G}$. We have that $N_\omega$ is coamenable if and only if $\omega\in M^r(\widehat{\G}) = C_r(\widehat{\G})^*$.}\\

See sections $4.1$ and $4.2$ for more.
\begin{rem}
    One advantage of the characterization of coamenability in Theorem \ref{reducedIdempState} is that it immediately generalizes to locally compact quantum groups.
\end{rem}
In this work, we establish some basic theory on the reduced idempotent states of compact quantum groups. In particular, as a simple consequence of Theorem \ref{reducedIdempState}, we obtain the following (which is probably known to experts).\\
~\\
{\bf Corollary \ref{redcenidemps}} {\it Let $\G$ be a discrete quantum group. There is a one-to-one correspondence between the amenable quantum subgroups of $\G$ and the central idempotent states on $C_r(\widehat{\G})$. The tracial central idempotent states on $C_r(\widehat{\G})$ are in one-to-one correspondence with the amenable normal quantum subgroups of $\G$ for which their quotients are unimodular.}\\

In light of the exposition in the first paragraph of this section, this work represents a step towards understanding the reduced idempotent states of a compact quantum group, which is a gap for understanding the unique trace property.

Kalantar et al. also coined the notion of a relatively amenable coideal of a discrete quantum group, and proved that relative amenability of a coideal of quotient type $\ell^\infty(\G/\H)$ is equivalent to amenability of $\H$ \cite[Theorem 3.7]{KKSV22}. So, in this case, we have a connection between relative amenability and coamenability of coideals. In this work we coin the notion of an amenable coideal of a discrete quantum group. These notions of relative amenability and amenability (see Section $2.2$) are motivated by their equivalence with relative amenability and amenability of a closed subgroup of a locally compact group respectively \cite{Cap}. In the case of a discrete group $G$, amenability and relative amenability are equivalent, and have the following characterization.
\begin{thm}\cite{D78,AD03, R71, Cap}\label{ClassicalThm}
    Let $G$ be a discrete group and $H$ a subgroup. The following are equivalent:
    \begin{enumerate}
        \item $H$ is amenable;
        \item $\ell^\infty(G/H)$ is amenable;
        \item $\ell^\infty(G/H)$ is relatively amenable;
        \item $J^1(G,H) = \ell^\infty(G/H)_\perp$ has a bounded right approximate identity (brai);
        \item $J^1(G,H)$ has a brai in $\ell^1_0(G) = \{f\in \ell^1(G) : \int_G f = 0\}$;
        \item $J^1(G,H)$ has a brai in $\ell^1_0(H)$.
    \end{enumerate}
\end{thm}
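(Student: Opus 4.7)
The plan is to close the cycle $(1) \Rightarrow (6) \Rightarrow (5) \Rightarrow (4) \Rightarrow (3) \Rightarrow (2) \Rightarrow (1)$. Testing the preannihilator definition against the constant function $1$ and against the coset indicators $\chi_{sH}$ verifies the chain of inclusions $\ell^1_0(H) \subseteq J^1(G,H) \subseteq \ell^1_0(G)$, which makes $(6) \Rightarrow (5) \Rightarrow (4)$ immediate and $(2) \Rightarrow (3)$ tautological. For $(1) \Rightarrow (6)$, I would invoke the Reiter--Day characterization that $H$ is amenable iff the augmentation ideal $\ell^1_0(H)$ admits a brai $(e_\alpha)$ for itself. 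Each such $e_\alpha$ already sits in $J^1(G,H)$ by the above inclusion, so the task reduces to upgrading this to a brai for the larger ideal: I would decompose a general $f \in J^1(G,H)$ cosetwise as $f = \sum_{sH \in G/H} \delta_s * f_s$ with $f_s \in \ell^1_0(H)$ (the cosetwise sums of $f$ vanish, which is what makes this splitting well-defined), and since left convolution by $\delta_s$ commutes with right convolution by $e_\alpha$, the convergence $f_s * e_\alpha \to f_s$ in $\ell^1(H)$ lifts summand by summand — controlled by an $\varepsilon/2^n$-truncation — to $f * e_\alpha \to f$ in $\ell^1(G)$.

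For $(4) \Rightarrow (3)$, I would pass to a weak$*$ cluster point of the net of ucp maps $\Phi_\alpha : \ell^\infty(G) \to \ell^\infty(G)$ whose predual is, morally, truncation by $e_\alpha$, after first normalizing $(e_\alpha)$ to lie in the positive part of the unit ball by a Day convex-combination argument. The brai condition forces any such cluster point $\Phi$ to annihilate the $\ell^1(G)$-module image of $J^1(G,H)$, and the identification $J^1(G,H) = \ell^\infty(G/H)_\perp$ then places $\Phi(\ell^\infty(G)) \subseteq \ell^\infty(G/H)$, while the module-map property survives the weak$*$ limit. The step $(3) \Rightarrow (2)$ exploits discreteness: any ucp right $\ell^1(G)$-module map $\Phi$ into $\ell^\infty(G/H)$ must fix the coset indicators pointwise, because they are projections belonging to the multiplicative domain of $\Phi$ by a Choi-type rigidity argument combined with the module constraint, and hence $\Phi$ restricts to the identity on $\ell^\infty(G/H)$, i.e. it is a projection. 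Finally, $(2) \Rightarrow (1)$ is obtained by evaluating the ucp projection at the coset $eH$ and restricting to $\ell^\infty(H)$, producing a left-invariant mean on $H$.

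The step I expect to be the main obstacle is $(4) \Rightarrow (3)$, where the weak$*$ cluster-point argument must simultaneously preserve the right $\ell^1(G)$-module morphism property and force the range of $\Phi$ into the prescribed coideal $\ell^\infty(G/H)$; the brai structure is precisely the bridge that makes these two requirements compatible, and keeping careful track of both at the limit is where the real work lives. A secondary delicate point is $(3) \Rightarrow (2)$: its discreteness-specific rigidity does not transfer verbatim to the general quantum setting and foreshadows the heavier machinery that the rest of the paper must develop in order to work with genuine right coideals of $\ell^\infty(\G)$.
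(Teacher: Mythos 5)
The paper does not actually prove Theorem \ref{ClassicalThm} --- it is quoted from \cite{D78,AD03,R71} --- so the relevant comparison is with the quantum generalization in Corollary \ref{qsubgroupsAmen} and the machinery behind it (Theorems \ref{RelAmenIffAmen} and \ref{IdealsAmen}, Proposition \ref{InvImpMean}). Your easy arrows are fine: the inclusions $\ell^1_0(H)\subseteq J^1(G,H)\subseteq\ell^1_0(G)$, the cosetwise splitting $f=\sum_s\delta_s*f_s$ with $f_s\in\ell^1_0(H)$, and the truncation argument for $(1)\Rightarrow(6)$ all check out. But the two arrows carrying the real content rest on mechanisms that fail. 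For $(4)\Rightarrow(3)$: you cannot ``normalize $(e_\alpha)$ to lie in the positive part of the unit ball,'' since $J^1(G,H)\subseteq\ell^1_0(G)$ and the only positive element of $\ell^1_0(G)$ is $0$; no Day convex-combination repairs that. The object that needs to be made positive is not $e_\alpha$ but the weak$*$ cluster point $\omega$ of $\delta_e-e_\alpha$, which is unital and satisfies $f*\omega=0$ for $f\in J^1(G,H)$ but is a priori not a state. The missing step is the upgrade from a nonzero invariant functional to an invariant mean --- classically via $\delta_h*|\omega|=|\delta_h*\omega|=|\omega|$ and $\|\,|\omega|\,\|\geq|\omega(1)|=1$ (this is Proposition \ref{InvImpMean}), or via the Hermitian part and the norm-additive Jordan decomposition as in the proof of Theorem \ref{IdealsAmen}.

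The more serious gap is $(3)\Rightarrow(2)$. It is simply false that a ucp right $\ell^1(G)$-module map $\Phi:\ell^\infty(G)\to\ell^\infty(G/H)$ must fix the coset indicators and hence restrict to the identity on $\ell^\infty(G/H)$. Any such $\Phi$ is $E_\omega=\omega*(\,\cdot\,)$ for a left-$H$-invariant state $\omega$, and nothing forces $\omega(1_H)\neq 0$: take $G=\F_2$, $H=\langle a\rangle$, fix $t_0\notin H$, let $m$ be an invariant mean on $\ell^\infty(H)$, and set $\omega(x)=m\bigl(h\mapsto x(ht_0)\bigr)$. Then $\omega$ is left-$H$-invariant, so $E_\omega$ is a ucp module map into $\ell^\infty(G/H)$, yet $\omega(1_H)=0$ and $E_\omega(1_H)(e)=0\neq 1$, so $E_\omega$ is not a projection and $1_H$ is not fixed (multiplicative-domain considerations cannot rescue this, since membership in the multiplicative domain never forces $\Phi(p)=p$). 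The implication is true, but only by constructing a \emph{different} map: either pass through $(3)\Rightarrow(1)$ via a transversal for $G/H$ and then use $m_H\circ(\text{restriction to }H)$, or use the $\tfrac{1}{\omega(P)}P\omega P$ construction of Theorem \ref{RelAmenIffAmen}, which requires first securing $\omega(P)\neq 0$. That non-vanishing is exactly the obstruction the paper isolates (see the remark following Corollary \ref{qsubgroupsAmen} and Corollary \ref{RelAmenIffAmenQS}); your ``rigidity'' argument would make it disappear, which is a sign it cannot be correct.
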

When $G$ is a locally compact group, conditions 1., 2., 4., and 6. are equivalent, and 2. and 5. are equivalent (cf. \cite{Cap}). It is unknown if relative amenability and amenability coincide for locally compact groups in general. We note, however, that the techniques are trivialized in the discrete setting. In the discrete quantum group setting, where we replace subgroups with right coideals, we have to work significantly harder to reach similar results. Both the fact that quantum groups exist only as virtual objects, and that coideals have no underlying closed quantum subgroup (cf. \cite{Daws}) each introduce barriers of their own. For the coideals that are of quotient type, with Corollary \ref{qsubgroupsAmen} we have an analogue of Theorem \ref{ClassicalThm} for discrete quantum groups.

Essential to Caprace and Monod's work on (relative) amenability is the notion of an $H$-invariant state on $\ell^\infty(G)$. While an $\H$-invariant state on $\ell^\infty(\G)$ is a coherent notion (see Section $4.4$), coideals in general are not necessarily quotients by quantum subgroups. Every coideal $N$ of a discrete quantum group, however, can be assigned a group-like projection $P$ such that
$$N = \widetilde{N}_P := \{x\in\ell^\infty(\G) : (1\otimes P)\Delta_\G(x) = x\otimes P\}$$
(see the exposition following Definition \ref{grpprojDef}). Then, to get around this obstruction, we develop a notion of a $P$-invariant state, where we say $m\in \ell^\infty(\G)^*$ is {\bf $P$-left invariant} if either $(fP)*m = f(P)m$ or $(Pf)*m = f(P)m$ holds for every $f\in\ell^1(\G)$.

Above, we are using the predual action of $\ell^\infty(\G)$ on $\ell^1(\G)$:
$$(xf)(y) = f(yx) ~ \text{and} ~ (fx)(y) = f(xy).$$
Then we prove the following.\\
~\\
{\bf Theorem \ref{RelAmenPInv}} {\it Let $\G$ be a discrete quantum group and $\widetilde{N}_P$ a right coideal with group-like projection $P$. We have that $\widetilde{N}_P$ is relatively amenable if and only if there exists a $P$-left invariant state $\ell^\infty(\G)\to\C$.}\\

Given a group-like projection $P$, we demonstrate that the weak$^*$ closed right invariant subspaces
$$\{x\in\ell^\infty(\G) : (1\otimes P)\Delta_\G(x)(1\otimes P) = x\otimes P \} =: M_P\supseteq \widetilde{N}_P$$
are essential to amenability (see Section $3$). The link to relative amenability of $\widetilde{N}_P$ is observed with the following result.\\
~\\
{\bf Theorem \ref{CondRelAmenMP}} {\it Let $P$ be a group-like projection. We have that $M_P$ is amenable if and only if there exists a state $m : \ell^\infty(\G)\to \C$ such that $m(P) \neq 0$ and $(PfP)*m = f(P)m$ for all $f\in\ell^1(\G)$.}\\

With this analogue of an $H$-invariant state for coideals of discrete quantum groups, we obtain an analogue of Theorem \ref{ClassicalThm} for the subspaces $M_P$ and their associated closed left ideals $J^1(M_P) := (M_P)_\perp := \{f\in \ell^1(\G) : f|_{M_P} = 0\}$.\\
~\\
{\bf Theorem \ref{IdealsAmen}} {\it Assume $\G$ is discrete and $P$ is a group--like projection. TFAE:
\begin{enumerate}
        \item $M_P$ is amenable;
        \item $J^1(M_P)$ admits a brai;
        \item $J^1(M_P)$ admits a brai in $\{P\}_\perp$.
\end{enumerate}}
Quantum group duality gives us a one-to-one correspondence between right coideals of $L^\infty(\G)$ and right coideals of $L^\infty(\widehat{\G})$ via their codual coideals (see Section $4.2$). In particular, given an idempotent state $\omega$ and its compact quasi-subgroup $N_\omega\subseteq L^\infty(\G)$, we identify its codual coideal $\widetilde{N_\omega}\subseteq L^\infty(\widehat{\G})$. For a locally compact group $G$, this duality is the identification of the coideals $L^\infty(G/H)$ and $VN(H) = L^\infty(\widehat{H})$.

In general it is not that hard to show coamenability of a LCQG $\G$ implies amenability of its dual $\widehat{\G}$. It is a highly non-trivial result of Tomatsu \cite{Toma} that a compact quantum group $\widehat{\G}$ is coamenable only if the discrete quantum group $\G$ is amenable, generalizing the case of compact and discrete Kac algebras due to Ruan \cite{R96}, which generalizes Leptin's theorem from the classical discrete setting. In the context of compact quasi-subgroups of quotient type and their codual coideals, Kalantar et al{.} posed the following question.
\begin{ques}\label{MainQuestion}\cite{KKSV22}
    Let $\G$ be a discrete quantum group. Let $\widehat{\H}$ be a closed quantum subgroup of $\widehat{\G}$. Is it true that $L^\infty(\widehat{\G}/\widehat{\H})$ is coamenable if and only if $\ell^\infty(\H)$ is relatively amenable?
\end{ques}
This questions extends to compact quasi-subgroups of compact quantum groups in the following manner: is it true that $N_\omega\subseteq L^\infty(\widehat{\G})$ is coamenable if and only if its codual coideal $\widetilde{N_{P_\omega}}$ is relatively amenable?

We make progress for the compact quasi-subgroup version of Question \ref{MainQuestion}. More specifically, we have the converse when we use amenability of the subspace $M_P$ instead of relative amenability of $\widetilde{N}_P$.\\
~\\
{\bf Theorem \ref{biglemmaconverse}} {\it Let $\G$ be a discrete quantum group and $N_\omega\subseteq L^\infty(\widehat{\G})$ a compact quasi-subgroup with $P =\lambda_{\widehat{\G}}(\omega)$. If $N_\omega$ is coamenable then $M_P$ is amenable.}\\

Our progress for the forwards direction is with Lemma \ref{BigLemma}, which is specialized to the case where $P$ is central. A consequence of the above theorem is a full generalization of Caprace and Monod's characterization of amenability and relative amenability for quantum subgroups of discrete quantum groups (see Corollary \ref{qsubgroupsAmen}). Recall that Kalantar et al. have proved that amenability of a quantum subgroup $\H\leq \G$ of a DQG $\G$ is equivalent to relative amenability of $\ell^\infty(\G/\H)$. In particular, we have established the following.\\
~\\
{\bf Corollary \ref{RelAmenIffAmenQS}} {\it Let $\G$ be a DQG and $\H\leq \G$ a quantum subgroup. Then $\ell^\infty(\G/\H)$ is relatively amenable if and only if it is amenable.}\\

The remainder of the paper is organized as follows: in Section $3$, for a group-like projection $P$ we develop a notion of a $P$-invariant state and relate it to relative amenability of $\widetilde{N}_P$ (Theorem \ref{RelAmenIffAmen}). We achieve similar characterizations of both relative amenability and amenability of the subspaces of the form $M_P$. With these characterizations in hand, we are able to establish a version of Theorem \ref{ClassicalThm} (2. $\iff$ 4. $\iff$ 6.) (Theorem \ref{IdealsAmen}).

In Section $4$ we shift gears towards compact quantum groups and their right coideals, with special attention to their compact quasi-subgroups. We prove that a compact quasi-subgroup $N_\omega$ is coamenable if and only if the associated idempotent state $\omega$ factors through the reduced $C^*$-algebra (Theorem \ref{reducedIdempState}). We then classify the central idempotent states on $C_r(\widehat{\G})$ (Theorem \ref{redcenidemps}). Finally, we prove an anlaogue of Theorem \ref{ClassicalThm} for discrete quantum groups and their quantum subgroups.
\begin{rem}
    Kalantar et al{.} achieved an analogue of (some of) Theorem \ref{ClassicalThm} (1. $\iff$ 3.) for discrete quantum groups in \cite[Theorem 4.7]{KKSV22}. In Section 4, among other things, we present an alternative proof of this same result.
\end{rem}

\section{Preliminaries on Coideals of DQGs}
\subsection{Discrete Quantum Groups}
The notion of a quantum group we will be using is the von Neumann algebraic one developed by Kustermans and Vaes \cite{KV00}. A {\bf locally compact quantum group} (LCQG) $\G$ is a quadruple $(L^\infty(\G), \Delta_\G, h_L, h_R)$ where $L^\infty(\G)$ is a von Neumann algebra; $\Delta_\G : L^\infty(\G) \to L^\infty(\G)\overline{\otimes}L^\infty(\G)$ a normal unital $*$--homomorphism satisfying $(\Delta_\G\otimes\id)\circ \Delta_\G = (\id\otimes\Delta_\G)\circ \Delta_\G$ (coassociativity); and $h_L$ and $h_R$ are normal semifinite faithful weights on $L^\infty(\G)$ satisfying
$$h_L(f\otimes \id)(\Delta_\G(x)) =f(1)h_L(x), ~ f\in L^1(\G), x\in \mathcal{M}_{h_L} ~ \text{(left invariance)}$$
and
$$h_R(\id\otimes f)(\Delta_\G(x)) = f(1)h_R(x), ~f\in L^1(\G), x\in \mathcal{M}_{h_R} ~ \text{(right invariance)},$$
where $\mathcal{M}_{h_L}$ and $\mathcal{M}_{h_R}$ are the set of integrable elements of $L^\infty(\G)$ with respect to $h_L$ and $h_R$ respectively. We call $\Delta_\G$ the {\bf coproduct} and $h_L$ and $h_R$ the {\bf left and right Haar weights} respectively, of $\G$. The predual $L^1(\G) := L^\infty(\G)_*$ is a Banach algebra with respect to the product $f*g:= (f\otimes g)\circ\Delta_\G$ known as {\bf convolution}. This naturally provides us with left and right module actions on $L^\infty(\G)$, realized by the equations
$$f*x = (\id\otimes f)(\Delta_\G(x)) ~ \text{and} ~ x*f = (f\otimes\id)(\Delta_\G(x)).$$

Using $h_L$, we can build a GNS Hilbert space $L^2(\G)$ in which $L^\infty(\G)$ is standardly represented. There is a unitary $W_\G\in L^\infty(\G)\overline{\otimes}\fB(L^2(\G))$ such that $\Delta_\G(x) = W_\G^*(1\otimes x)W_\G$. The unitary $W_\G$ is known as the {\bf left fundamental unitary} of $\G$ respectively. The {\bf left regular representation} is the representation
$$\lambda_\G : L^1(\G)\to \fB(L^2(\G)), ~ f\mapsto (f\otimes\id)W_\G.$$
There is a dense involutive subalgebra $L^1_\#(\G)\subseteq L^1(\G)$ that makes $\lambda_\G|_{L^{\#}(\G)}$ a $*$-representation. We denote the von Neumann algebra $L^\infty(\widehat{\G}) = \lambda_\G(L^1(\G))''$. There exists a LCQG $\widehat{\G} = (L^\infty(\widehat{\G}),\Delta_{\widehat{\G}}, \widehat{h_L}, \widehat{h_R})$, where $\Delta_{\widehat{\G}}$ is implemented by $W_{\widehat{\G}} = \Sigma(W_\G)^*$, where $\Sigma : a\otimes b\mapsto b\otimes a$ is the flip map. Pontryagin duality holds: $\hat{\hat{\G}} = \G$.

A {\bf discrete quantum group (DQG)} is a LCQG $\G$ where $L^1(\G) ~(= \ell^1(\G))$ is unital (cf. \cite{R08}, \cite{Wor}). We denote the unit by $\epsilon_\G$, and it satisfies the counit property:
$$(\epsilon_\G\otimes\id)\circ \Delta_\G = \id = (\id\otimes\epsilon_\G)\circ \Delta_\G.$$
Equivalently, $\widehat{\G}$ is a {\bf compact quantum group (CQG)}, which is a LCQG where $\widehat{h_L} = \widehat{h_R}\in L^1(\widehat{\G})$ is a state, known as the Haar state of $\widehat{\G}$. When $\G$ is discrete, the irreducible $*$-representations of $L^1(\widehat{\G})$, are finite dimensional, where a $*$-representation on locally compact $\G$ is a representation that restricts to a $*$-representation on $L^1_\#(\G)$. We denote a family of representives of irreducibles on $L^1(\widehat{\G})$ by $Irr(\widehat{\G})$, and for each $\pi\in Irr(\widehat{\G})$ we let $\fH_\pi$ denote the corresponding $n_\pi$-dimensional Hilbert space (cf. Section $4.1$). Then,
$$L^\infty(\G) = \ell^\infty(\G) = \ell^\infty-\bigoplus_{\pi\in Irr(\widehat{\G})}M_{n_\pi}$$
as von Neumann algebras, and so we obtain the spatial decomposition
$$\ell^1(\G) = \ell^1-\bigoplus_{\pi\in Irr(\widehat{\G})}(M_{n_\pi})_*.$$
\begin{rem}\label{CommutativeCase}
    The examples of DQGs where $\ell^\infty(\G)$ is commutative are the discrete groups (cf. \cite{T69}), where if $G$ is a discrete group, then $G = (\ell^\infty(G), \Delta_G, m_L, m_R)$ where $m_L = m_R = h_L = h_R$ are the left and right Haar measures, and $\Delta_G(f)(s,t) = f(st)$.
    
    The DQGs where $\ell^1(\G)$ is commutative are the duals of compact groups, where if $G$ is a compact group, then $\widehat{G} = (VN(G), \Delta_{\widehat{G}}, \widehat{h})$ where $\widehat{h}$ is the Plancherel weight and $\Delta_{\wG}(\lambda_G(s)) = \lambda_G(s)\otimes\lambda_G(s)$.
\end{rem}

\subsection{Invariant Subspaces, Ideals, and Quotients}
For a LCQG $\G$, we say a subpsace $X\subseteq L^\infty(\G)$ is {\bf right invariant} if $X*f\subseteq L^\infty(\G)$ for every $f\in \ell^1(\G)$, i.e., it is a right $L^1(\G)$-submodule of $L^\infty(\G)$. We define {\bf left invariance} analogously on the left, and we say $X$ is {\bf two-sided} if it is both left and right $L^1(\G)$-invariant. From the bipolar theorem, we obtain a one-to-one correspondence between weak$^*$ closed right $L^1(\G)$-modules of $L^\infty(\G)$ and norm closed left $L^1(\G)$-modules of $L^1(\G)$ (i.e., closed left ideals): $X\subseteq L^\infty(\G)$ is right invariant if and only if $$X_\perp := \{f\in L^1(\G) : f(X) = 0\}\subseteq L^1(\G)$$
is a left ideal. We obtain similar remarks if we replace left with right and vice versa, and if we use two-sided instead. We will use the notation $J^1(X) = X_\perp$.

Of special interest are the invariant subalgebras. If $N$ is a right invariant von Neumann subalgebra of $L^\infty(\G)$, then we call $N$ a {\bf right coideal}. It is not hard to see that a von Neumann subalgabra $N\subseteq L^\infty(\G)$ is right invariant if and only if $\Delta_\G(N)\subseteq  L^\infty(\G)\overline{\otimes} N$. This makes $\Delta_\G|_{N}$ a {\bf coaction} of $\G$ on $N$ so that right coideals are {\bf $\G$-spaces} that are embedded in $L^\infty(\G)$ (see \cite{KS12}).

In the discrete setting, the right coideals have a certain form.
\begin{defn}\label{grpprojDef}
    A {\bf right group-like projection} of $\G$ is a self-adjoint projection $P\in L^\infty(\G)$ satsfying
    $$(1\otimes P)\Delta_\G(P) = P\otimes P$$
    and a left group-like projection is a self-adjoint projection satisfying
    $$(P\otimes 1)\Delta_\G(P) = P\otimes P.$$
\end{defn}
It follows from \cite[Proposition 1.5]{K18} and \cite[Corollary 1.6]{K18} that every right coideal of a DQG is of the form $$\widetilde{N}_P := \{x\in \ell^\infty(\G) : (1\otimes P)\Delta_\G(x) = x\otimes P\}$$
for some group-like projection $P$. Indeed, given a right coideal $N\subseteq\ell^\infty(\G)$, since $\widehat{\G}$ is compact, the Haar state is finite and \cite[Proposition 1.5]{K18} and \cite[Corollary 1.6]{K18} automatically apply to the codual of $N$ (generally, one must be careful with the details of these claims as \cite{K18} is using the right regular representation whereas we are using the left regular representation. The distinctions that arise will be relevant in Section $4$). It turns out that $R_\G(P)$, where $R_\G$ is the unitary antipode of $\G$ (see Section $4.1$) is the orthogonal projection onto $L^2(\widetilde{\widetilde{N}_P})\subseteq L^2(\widehat{\G})\cong \ell^2(\G)$, which is the Hilbert space generated by
$$\widetilde{\widetilde{N}_P} = \{x\in L^\infty(\widehat{\G}) : Px = xP\} = L^\infty(\widehat{\G})\cap \{P\}'.$$
Then $R_\G(P)$ generates the left version of $\widetilde{N}_P$ from which we recover $\widetilde{N}_P$ using the unitary antipode (see Remark \ref{QuantumSetsRemark}). We say that the coideals $\widetilde{\widetilde{N}_P}$ and $\widetilde{N}_P$ are {\bf codual} to one another, and $\widetilde{\widetilde{N}_P}$ is often referred to as the codual of $\widetilde{N}_P$ and vice versa.

We note that if $\widetilde{N}_P$ is a two-sided coideal, by which we mean is both left and right $\ell^1(\G)$-invariant, then $P$ is both a left and right invariant group-like projection, and moreover, we have $\widetilde{N}_P = \ell^\infty(\H)$, where $\widehat{\H}$ is a (Woronowicz) closed quantum subgroup of $\widehat{\G}$ (cf. \cite{NY14} for the claim and Section $4.2$ for the definition of a (Woronowicz) closed quantum subgroup).

We will be studying relative amenability and amenability of coideals of discrete quantum groups. The notion of a relatively amenable coideal was first coined in \cite{KKSV22}.
\begin{defn}
    Let $\G$ be a locally compact quantum group. A right coideal $N\subseteq L^\infty(\G)$ is {\bf relatively amenable} if there exists a unital completely positive (ucp) right $L^1(\G)$-module map $L^\infty(\G)\to N$.
\end{defn}
Inspired by the above, we make the following definition.
\begin{defn}
    Let $\G$ be a locally compact quantum group. A right coideal $N\subseteq L^\infty(\G)$ is {\bf amenable} if there exists a surjective ucp right $L^1(\G)$-module projection $L^\infty(\G)\to N$.
\end{defn}
We will also make use of the terms amenability and relative amenability in reference to weak$^*$ closed right invariant subspaces of $\ell^\infty(\G)$ as well.

A notion related to relative amenability and amenability is the following, which we will develop in Section $3$.
\begin{defn}\label{pinvariantstate}
    Let $P$ be a group-like projection. We say $m\in \ell^\infty(\G)^*$ is {\bf $P$-left invariant} if either $(fP)*m = f(P)m$ or $(Pf)*m = f(P)m$ holds for every $f\in\ell^1(\G)$.
\end{defn}
A certain subclass of right coideals are manufactured from closed quantum subgroups. We outline their formulation here for DQGs.
\begin{defn}
    Given DQGs $\G$ and $\H$, we say $\H$ is a {\bf open quantum subgroup} of $\G$ if there exists a normal unital surjective $*$-homomorphism $\sigma_\H : \ell^\infty(\G)\to\ell^\infty(\H)$ such that
    $$(\sigma_\H\otimes\sigma_\H)\circ \Delta_\G = \Delta_\H\circ\sigma_\H.$$
\end{defn}
\begin{rem}
    There is much subtlety to the notion of a closed quantum subgroup in general (cf. \cite{Daws}). Open quantum subgroups are always closed, and the converse follows in the discrete case (cf. \cite{Kal1}). In light of this, we will always refer to open quantum subgroups of DQGs as simply quantum subgroups.
\end{rem}
The quotient space $\G/\H$ is defined as follows: we denote
$$l_\H = (\id\otimes\sigma_\H)\circ \Delta_\G$$
and set
\begin{align*}
    \ell^\infty(\G/\H) &= \{x\in\ell^\infty(\G) : l_\H(x) = x\otimes 1\}.
\end{align*}
The space $\ell^\infty(\G/\H)$ is a right coideal. Any right coideal of the form $\ell^\infty(\G/\H)$ is called a {\bf right coideal of quotient type}. In this special case, we use the notation
$$J^1(\G,\H) = J^1(\ell^\infty(\G/\H)).$$
If we let $1_\H$ be the central support of $\sigma_\H$, then we obtain an injective $*$-homomorphism:
$$\iota_\H : \ell^\infty(\H)\to\ell^\infty(\G),  ~\sigma_\H(x)\mapsto 1_\H x.$$
It turns out $1_\H$ is the group-like projection that generates $\ell^\infty(\G/\H)$, i.e., $N_{1_\H} = \ell^\infty(\G/\H)$. Conversely, every central group-like projection in $\ell^\infty(\G)$ generates the quotient of a quantum subgroup (cf.\cite{Kal1}).

\subsection{Amenability of LCQGs}
A LCQG $\G$ is said to be {\bf amenable} if there exists a right invariant mean, i.e., a state $m\in L^\infty(\G)^*$ such that $m(x*f) = f(1)m(x)$ for all $f\in L^1(\G)$ and $x\in L^\infty(\G)$. Amenability is equivalent to the existence of a left invariant mean and also the existence of a mean that is both left and right invariant (cf. \cite{Des}).

A LCQG is said to be {\bf coamenable} if $L^1(\G)$ admits a bounded approximate identity (bai). It was shown in \cite{Toma} that a DQG $\G$ is amenable if and only if $\widehat{\G}$ is coamenable. For a locally compact $\G$ it is known that coamenability of $\widehat{\G}$ implies amenability of $\G$ in general, however, the converse remains open.

\subsection{Annihilator Ideals}
We introduce certain ideals and subspaces of the $L^1$-algebra of a DQG which turn out to be fundamental to amenability and relative amenability of the right coideals (and left and two-sided coideals).
\begin{rem}
    Before proceeding, we make a technical remark. We obtain an action of $L^1(\G)$ on $L^\infty(\G)^*$ by taking the adjoint of the action of $L^1(\G)$ on $L^\infty(\G)$: we set
    $$\omega*f(x) := \omega(f*x) = \omega(\id\otimes f)(\Delta_\G(x)), ~ f\in L^1(\G), \omega\in L^\infty(\G)^*, x\in L^\infty(\G).$$
    Given von Neumann algebras $N$ and $M$, it is clear that the slice maps $\varphi\otimes\id : N\overline{\otimes}M\to M$ are defined for normal functionals $\varphi\in N_*$. While less clear, it is the case that slice maps are still defined if we drop normality and additionally satisfy $(\varphi\otimes\id)(\id\otimes\Phi) = \Phi(\varphi\otimes\id)$ for any normal ucp map $\Phi : M\to K$ to another von Neumann algebra $K$ (consult \cite{Des} or \cite{Neuf}). Thus we are justified in writing
    $$\omega*f (x) = (\omega\otimes f)(\Delta_\G(x)) = f( x*\omega)$$
    and similarly for actions on the left.
\end{rem}
From now on, we will assume $\G$ is discrete.

For a functional $m\in \ell^\infty(\G)^*$ and $x\in \ell^\infty(\G)$, we will use the notation
$$Inv_L(m) = \{f\in \ell^1(\G) : f*m = f(1)m \}$$
and likewise for $Inv_R(m)$ but for normal functionals acting on the right of $m$. Then we set $Inv(m) = Inv_L(m,x)\cap Inv_R(m,x)$. We will denote
$$Ann_L(m) := \{f\in \ell^1(\G) : f*m = 0\}$$
and by $Ann_R(m)$ and $Ann(m)$ we mean the analogous thing.

Using our above notation, amenability is this: there exists a state $m\in \ell^\infty(\G)^*$ such that $\ell^1(\G) = Inv_L(m)$. It is easily seen that amenability is equivalent to the existence of a state $m\in \ell^\infty(\G)^*$ such that
$$\ell^1_0(\G):= \{f\in \ell^1(\G) : f(1) = 0\} = Ann_L(m).$$
We have that $\ell^1_0(\G)$ is an ideal of codimension one in $\ell^1(\G)$, which means, if there is an invariant state $m\in\ell^\infty(\G)^*$, then
$$Inv_L(m) = \ell^1(\G) = \ell^1_0(\G) + \C\epsilon_\G.$$
The generalization of this relationship is as follows.
\begin{prop}\label{AnnInv2}
    Assume $\G$ is discrete. Let $m\in \ell^\infty(\G)^*$ such that $m(1)\neq 0$. Then $Ann_L(m) + \C\epsilon_\G = Inv_L(m)$.
\end{prop}
\begin{proof}
    First note $\ker(f\mapsto f(1))\cap Ann_L(m) = Ann_L(m)$. To see this, notice $f*m = 0$ implies
    $$0 = f*m(1) = f(1)m(1)$$
    so $f(1) = 0$ because $m(1)\neq 0$. So, if $f\in Ann_L(m)$ and $c\in \C$, then $(f + c\epsilon_\G)(1) = c$. Then
    $$(f+c\epsilon_\G)*m = cm = [(f+c\epsilon_\G)(1)]m.$$
    On the other hand, given $f\in Inv_L(m)$, if $f(1) = 0$, then $f\in Ann_L(m)$ is automatic, and otherwise, $(f - f(1)\epsilon_\G)*m = 0$. So $f = (f - f(1)\epsilon_\G) + f(1)\epsilon_\G$ is the desired decomposition.
\end{proof}
There is well-known a correspondence between bounded linear right $\ell^1(\G)$--module maps $E_\omega : \ell^\infty(\G)\to \ell^\infty(\G)$ and functionals $\omega\in\ell^\infty(\G)^*$ via the assignment $\omega =  \epsilon_\G \circ E_\omega$
where
$$E_\omega(x) := \omega*x.$$
We will call $\omega$ {\bf right idempotent} if $\omega(\omega*x) = \omega(x)$ for all $x\in\ell^\infty(\G)$. To put it another way, $\omega$ is right idempotent if it is idempotent with respect to the left Arens product on $\ell^\infty(\G)^*$ (see, for example \cite{HNR12} for more on the left and right Arens products). There is a right Arens product too, which is not generally equal to the left Arens product because $\mu(\id\otimes\nu) = \nu(\mu\otimes\id)$ does not necessarily hold for every $\mu,\nu\in \ell^\infty(\G)^*$. Note that the left and right Arens product on $C_u(\widehat{\G})^*$ coincide and are equal to the convolution product, hence there is no need to distinguish between right and left idempotency in that context.
\begin{rem}\label{ModMapsAnn}
    \begin{enumerate}
    \item It is the case that $\omega$ is right idempotent exactly when $E_\omega$ is idempotent, and likewise for (complete) positivity and unitality. In particular, $E_\omega$ is a unital completely positive (ucp) projection exactly when $\omega$ is a right idempotent state (see \cite{JNR09, HNR12} for an account of right $L^1(\G)$--module maps in the setting of LCQGs). Clearly, $\omega*\ell^\infty(\G)$ is a norm closed right invariant subspace of $\ell^\infty(\G)$. Whenever $\omega$ is right idempotent state, $E_\omega$ is a ucp projection.

    \item We also point out that the easy general fact
    $$(B + \C\epsilon_\G)^\perp = \ker(\epsilon_\G)\cap B^\perp,$$
    where $B\subseteq\ell^1(\G)$ is a subset, combined with Proposition \ref{AnnInv2} tells us
    $$\overline{\omega*\ell^\infty(\G)}^{wk*}\cap\ker(\epsilon_\G) = Inv_L(\omega)^\perp = \ker(\epsilon_\G)\cap Ann_L(\omega)^\perp$$
    whenever $\omega(1)\neq 0$.
    \end{enumerate}
\end{rem}
Of course, we can use approximation arguments to study the spaces $Inv_L(m)$ and $Ann_L(m)$.
\begin{lem}\label{ApproxInv}
    Let $m\in L^\infty(\G)^*$ be a state. There exists a net of states $(\omega_i)\subseteq L^1(\G)$ such that
    $$f*\omega_i - f(1)\omega_i\to 0$$
    for all $f\in Inv_L(m)\cup Ann_L(m)$.
\end{lem}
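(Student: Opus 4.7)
The plan is to combine a weak-$*$ approximation of $m$ by normal states with a classical Day-type convexity argument. First, by Goldstine's theorem (applied to the unit ball of $\ell^1(\G)$ inside $\ell^\infty(\G)^*$, together with the density of normal states in the full state space), I would pick a net of states $(\nu_\alpha) \subseteq \ell^1(\G)$ converging to $m$ in the weak-$*$ topology.

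Second, for each fixed $f \in Inv_L(m) \cup Ann_L(m)$, I would verify that $\nu_\alpha * f - f(1)\nu_\alpha \to 0$ in the weak topology of $\ell^1(\G)$. Pairing against an arbitrary $x \in \ell^\infty(\G)$ and using the slice-map identities from the opening remark of Section~3.1, this reduces to showing that $\nu_\alpha(x * f) - f(1)\nu_\alpha(x) \to 0$, which follows from $\nu_\alpha \to m$ weak-$*$ together with the hypothesis $f * m = f(1) m$. For the $Ann_L(m)$ case, $f(1) = 0$ is automatic (apply $(f*m)(1) = m(1 * f) = f(1)$ together with $f*m = 0$), so both terms in the difference vanish in the limit.

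Third, I would upgrade weak convergence to norm convergence via Day's trick. For each finite subset $F \subseteq Inv_L(m) \cup Ann_L(m)$ and integer $n \geq 1$, the set
\begin{equation*}
C_F = \Bigl\{ \bigl(\omega * f - f(1)\omega\bigr)_{f \in F} : \omega \in \ell^1(\G),~ \omega \geq 0,~ \|\omega\| = 1 \Bigr\} \subseteq \bigoplus_{f \in F} \ell^1(\G)
\end{equation*}
is convex, and Step~2 places the zero tuple in its weak closure. By Mazur's theorem, the weak and norm closures of a convex set coincide, so zero lies in the norm closure as well. This produces a state $\omega_{F,n}$ (a convex combination of some of the $\nu_\alpha$'s) with $\|\omega_{F,n} * f - f(1)\omega_{F,n}\| < 1/n$ for every $f \in F$, and indexing by the directed set of pairs $(F, n)$ yields the required net.

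The only genuinely substantive step is the weak-convergence verification in Step~2; the rest is a routine Day--Mazur argument standard in amenability theory. The main obstacle lies in bookkeeping around the left/right action conventions, namely arranging the slice-map evaluation of $(\nu_\alpha * f)(x)$ so that it lines up with the form $m(x*f) = f(1)m(x)$ supplied by $f \in Inv_L(m)$.
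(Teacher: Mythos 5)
Your proposal is correct and follows essentially the same route as the paper: weak-$*$ density of the normal states of $\ell^1(\G)$ in the state space of $\ell^\infty(\G)^*$, followed by a Day--Mazur convexity argument on the product $\prod_{f\in F}\ell^1(\G)$, which is precisely the two-step sketch the paper gives (citing Enock). The only caveat is the left/right bookkeeping you already flag: under the paper's conventions $\nu_\alpha(x*f)=(f*\nu_\alpha)(x)$, so your Step~2 actually establishes $f*\nu_\alpha - f(1)\nu_\alpha\to 0$ rather than $\nu_\alpha*f - f(1)\nu_\alpha\to 0$ --- but this is the form in which the lemma is later applied (the proof of Theorem~\ref{IdealsAmen} uses $f*w_j\to 0$ for $f\in Ann_L(m)$), and the paper's own statement and proof are already inconsistent on this point (the proof invokes $Inv_R(m)\cup Ann_R(m)$ while the statement reads $Inv_L$), so no substantive gap results.
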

\begin{proof}
    The argument follows from the proof of the corresponding statement for left invariant means with minor adjustments (see \cite{Enock}). To elaborate, we obtain a net $(\omega_i)$ such that $f*\omega_i - f(1)\omega_i\to^{wk}0$ from weak density of $\overline{B_1(L^1(\G))_+}$ in $\overline{B_1(L^\infty(\G)^*)_+}$, and we achieve norm convergence from a convexity argument on the space $\prod_{f\in Inv_R(m)\cup Ann_R(m)} L^1(\G)$.
\end{proof}

\section{Amenability and Relative Amenability of Coideals}
We continue to assume $\G$ is a DQG. Recall that the right coideals of $\ell^\infty(\G)$ are of the form $\widetilde{N}_P$ for a group-like projection $P$. We will establish the role $P$ plays in amenability and relative amenability of $\widetilde{N}_P$ as a right coideal.

The following useful lemma is probably well known to experts, but we provide a proof for convenience.
\begin{lem}\label{ProjectionIdentity}
    Assume $\G$ is discrete and $P$ is a group-like projection. Let $m$ be a functional such that $m*\ell^\infty(\G)\subseteq \widetilde{N}_P$. Then $P(m*x) = m(x)P$ for all $x\in \ell^\infty(\G)$.
\end{lem}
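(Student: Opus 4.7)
The plan is to exploit the defining relation of $N_P$ directly and kill one leg with the counit. Since $m*x \in N_P$ by hypothesis, the definition of $N_P$ gives
\[
(1\otimes P)\Delta_\G(m*x) = (m*x)\otimes P.
\]
This is an identity in $\ell^\infty(\G)\overline{\otimes}\ell^\infty(\G)$, and I can now apply the normal slice $\epsilon_\G\otimes\id$ to both sides. On the right this simply produces $\epsilon_\G(m*x)\,P$. On the left, the normal slice commutes past the left multiplier $1\otimes P$ in the sense that $(\epsilon_\G\otimes\id)((1\otimes P)\Delta_\G(m*x)) = P\cdot(\epsilon_\G\otimes\id)\Delta_\G(m*x)$, and the counit property $(\epsilon_\G\otimes\id)\Delta_\G = \id$ collapses this to $P(m*x)$. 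Thus
\[
P(m*x) = \epsilon_\G(m*x)\,P.
\]

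It remains to identify the scalar $\epsilon_\G(m*x)$ with $m(x)$. Unwinding the definition, $\epsilon_\G(m*x) = \epsilon_\G((\id\otimes m)\Delta_\G(x))$, i.e., the iterated slice $(\epsilon_\G\otimes m)\Delta_\G(x)$. Since $\epsilon_\G\in\ell^1(\G)$ is normal, the compatibility condition recorded in the opening remark of Section $3.1$ lets me interchange the order of the two slices, so this equals $m((\epsilon_\G\otimes\id)\Delta_\G(x)) = m(x)$ by the other counit identity. Combining the two equations gives $P(m*x) = m(x)P$ as desired.

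The only subtle point is the Fubini-type interchange in the scalar computation, since $m$ is merely a bounded functional on $\ell^\infty(\G)$ rather than a normal one; but this is precisely the situation covered by the technical remark at the start of Section $3.1$, so no additional work is needed. All other steps are applications of the counit axiom and a routine module-slice identity, so I do not expect any real obstacle.
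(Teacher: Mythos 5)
Your proof is correct and follows essentially the same route as the paper's: both slice the defining relation $(1\otimes P)\Delta_\G(m*x) = (m*x)\otimes P$ with $\epsilon_\G\otimes\id$ and use the counit identity, with the paper merely packaging the first step as the observation that $Px=\epsilon_\G(x)P$ for all $x\in N_P$ before applying it to $m*x$. The scalar identification $\epsilon_\G(m*x)=m(x)$, which the paper leaves implicit, you justify correctly via the Fubini property of slice maps from the opening remark of Section $3.1$.
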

\begin{proof}
    First, notice for $x\in \widetilde{N}_P$ that
    $$P(\epsilon_\G(x)) = \epsilon_\G(x)\otimes P = (\epsilon_\G\otimes\id)(1\otimes P)(\Delta_\G(x)) = 1\otimes  Px= Px.$$
    We point out that above fact appears explicitly in the literature (see the proof of \cite[Theorem 3.1]{FK17}, however, we should emphasize that the claim there does not involve the counit in the general case). Now, for $x\in \ell^\infty(\G)$, by assumption $m*x\in \widetilde{N}_P$, so,
    $$P(m*x) = P\epsilon_\G(m*x) = m(x)P.$$
\end{proof}
As mentioned in the introduction, in classical setting of a discrete group $G$, amenability of a subgroup $H\leq G$ is equivalent to relative amenability of $\ell^\infty(G/H)$, which, in turn, is equivalent to the existence of an $H$-invariant state on $\ell^\infty(G)$. The following theorem establishes an analogue of an $H$-invariant state for coideals of DQGs.
\begin{rem}
    We denote the canonical predual action of $\ell^\infty(\G)$ on $\ell^1(\G)$ as follows:
    $$xf(y) = f(yx) ~ \text{and} ~ fx(y) = f(xy), ~ x,y\in\ell^\infty(\G), ~f\in\ell^1(\G).$$
\end{rem}
Recall the definition of a $P$-left invariant state in Definition \ref{pinvariantstate}.
\begin{thm}\label{CondRelAmen}
    Let $P$ be a group-like projection and $\omega\in\ell^\infty(\G)^*$. Then $\omega*\ell^\infty(\G)\subseteq \widetilde{N}_P$ if and only if $\omega$ is $P$-left invariant.
\end{thm}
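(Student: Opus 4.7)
The plan is to reformulate both conditions in terms of the convolution $(fP)*(\cdot)$ and match them via the natural $\ell^1(\G)$-actions. First I would slice the defining relation of $N_P$ in its second tensor leg: applying $\id\otimes f$ to $(1\otimes P)\Delta_\G(z) = z\otimes P$ yields the characterization
$$z \in N_P \iff (fP)*z = f(P)z \text{ for every } f\in\ell^1(\G),$$
where $fP\in\ell^1(\G)$ denotes the predual functional $y\mapsto f(Py)$. Consequently $\omega*\ell^\infty(\G)\subseteq N_P$ is equivalent to the family
$$(fP)*(\omega*y) = f(P)(\omega*y), \qquad f\in\ell^1(\G),\ y\in\ell^\infty(\G).$$

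For the forward direction, assume $\omega*\ell^\infty(\G)\subseteq N_P$. Lemma \ref{ProjectionIdentity} then supplies the pointwise identity $P(\omega*x) = \omega(x)P$ for every $x\in\ell^\infty(\G)$. Using the Fubini-type rewriting $(fP)*\omega(x) = (fP\otimes\omega)\Delta_\G(x) = fP\bigl((\id\otimes\omega)\Delta_\G(x)\bigr) = f\bigl(P(\omega*x)\bigr)$, which is valid because $fP$ is normal, I would conclude
$$(fP)*\omega(x) = f(\omega(x)P) = f(P)\omega(x)$$
for all $x\in\ell^\infty(\G)$. Hence $(fP)*\omega = f(P)\omega = (fP)(1)\omega$, i.e., $fP\in Inv_L(\omega)$.

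For the reverse direction, assume $\ell^1(\G)P\subseteq Inv_L(\omega)$, equivalently $(fP)*\omega = f(P)\omega$ for every $f\in\ell^1(\G)$. I would then establish the associativity identity $(fP)*(\omega*y) = ((fP)*\omega)*y$; this is immediate from coassociativity when $\omega$ is normal, and in general rests on the slice-map compatibility recalled at the start of Section 3.1 between the non-normal slice $\omega$ and the normal slice $fP$ relative to $\Delta_\G$. Substituting the hypothesis into the right-hand side gives $((fP)*\omega)*y = (f(P)\omega)*y = f(P)(\omega*y)$, which is precisely the identity required in the first paragraph.

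The main technical point is the Fubini/associativity bookkeeping when $\omega$ is not assumed normal, specifically the two identities $(fP)*\omega(x) = f(P(\omega*x))$ used in the forward direction and $(fP)*(\omega*y) = ((fP)*\omega)*y$ used in the reverse direction. Both reduce to the compatibility of non-normal slice maps with the normal $*$-homomorphism $\Delta_\G$ recalled in the opening remark of Section 3.1, and once this bookkeeping is set up the remainder of each direction is a short calculation.
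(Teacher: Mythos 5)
Your proof is correct. The direction $\ell^1(\G)P\subseteq Inv_L(\omega)\Rightarrow\omega*\ell^\infty(\G)\subseteq N_P$ is essentially the paper's argument: your associativity identity $(fP)*(\omega*y)=((fP)*\omega)*y$ is the paper's three-leg coassociativity computation of $(1\otimes P)\Delta_\G(\omega*y)$ read after slicing the second leg against $f$, and your preliminary characterization $z\in N_P\iff (fP)*z=f(P)z$ for all $f$ is just the defining relation of $N_P$ plus separation of points by normal slices. Where you genuinely diverge is the converse. The paper proves $\omega*\ell^\infty(\G)\subseteq N_P\Rightarrow \ell^1(\G)P\subseteq Inv_L(\omega)$ by duality: it identifies $Inv_L(\omega)^\perp$ with $\omega*\ell^\infty(\G)\cap\ker\epsilon_\G=\omega*\ell^\infty(\G)\cap\{x:Px=0\}\subseteq(\ell^1(\G)P)^\perp$ and applies the bipolar theorem; this leans on Remark \ref{ModMapsAnn} (the identification $Ann_L(\omega)^\perp=\omega*\ell^\infty(\G)$) and on Proposition \ref{AnnInv2} applied to the character $1$, hence implicitly on $\omega(1)\neq 0$. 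You instead combine the pointwise identity $(fP)*\omega(x)=f(P(\omega*x))$ --- which the paper also records as its opening observation --- with Lemma \ref{ProjectionIdentity} to conclude $(fP)*\omega=f(P)\omega$ in two lines. Your route is shorter, stays entirely at the level of the Fubini/slice-map bookkeeping you correctly flag as the only technical point, and avoids both the annihilator machinery and the implicit nonvanishing of $\omega$ at $1$; what the paper's argument buys in exchange is the explicit description of $Inv_L(\omega)^\perp$ inside $\omega*\ell^\infty(\G)$, which is of a piece with the $Ann_L$/$Inv_L$ duality it exploits throughout Section 3.
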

\begin{proof}
    Notice that, given $x\in\ell^\infty(\G)$ and $f\in\ell^1(\G)$,
    \begin{align*}
        (f  P)*\omega(x) = f(P(\omega*x))
    \end{align*}
    so $(f P)*\omega = f(P)\omega$ for all $f\in\ell^1(\G)$ if and only if
    \begin{align}
        (\id\otimes \omega)(P\otimes 1)(\Delta_\G(x)) = P(\omega*x) = P\omega(x). ~ \label{CondRelAmenEq1}
    \end{align}
    So, if we assume $\ell^1(\G) P\subseteq Inv_L(\omega)$ then
    \begin{align*}
        (1\otimes P)\Delta_\G(\omega*x) &= (\id\otimes\id\otimes\omega)(1\otimes P\otimes 1)(\Delta_\G\otimes\id)\circ\Delta_\G(x)
        \\
        &= (\id\otimes\id\otimes\omega)(1\otimes P\otimes 1)(\id\otimes\Delta_\G)\circ\Delta_\G(x)
        \\
        &= (1\otimes P)(\id\otimes\omega)(\Delta_\G(x)) ~ \text{(using \eqref{CondRelAmenEq1})}
        \\
        &= (x*\omega)\otimes P.
    \end{align*}
    Conversely,
    $$\overline{\omega*\ell^\infty(\G)}^{wk*}\cap \{x\in \ell^\infty(\G) : Px = 0\} = \overline{\omega*\ell^\infty(\G)}^{wk*}\cap \ker\epsilon_\G = Inv_L(\omega)^\perp,$$
    where the first equality clearly follows from the more general fact $Px = \epsilon_\G(x)P$, for $x\in \widetilde{N}_P$, as pointed out in Lemma \ref{ProjectionIdentity}, and the second was pointed out in Remark \ref{ModMapsAnn}. We have that $x\in\ell^\infty(\G)$ satisfies $0 = (f P)(x) = f(Px)$ for all $f\in\ell^1(\G)$ if and only if $Px = 0$. So
    $$(\ell^1(\G) P)^\perp = \{x\in\ell^\infty(\G) : Px = 0\}.$$
    Hence $Inv_L(\omega)^\perp \subseteq (\ell^1(\G) P)^\perp$, which implies $\ell^1(\G) P\subseteq Inv_L(\omega)$.
\end{proof}
Notice that the above claims follow through if we replace $\ell^1(\G)P$ with $P\ell^1(\G)$. That is $(fP)*\omega = f(P)\omega$ for all $f\in\ell^1(\G)$ if and only if $(Pf)*\omega = f(P)\omega$ for all $f\in\ell^1(\G)$.

In particular, we have proved the following, where a $P$-left invariant state was defined in Section 2.2.
\begin{thm}\label{RelAmenPInv}
    Let $\G$ be a DQG and $\widetilde{N}_P$ a right coideal with group-like projection $P$. We have that $\widetilde{N}_P$ is relatively amenable if and only if there exists a $P$-left invariant state $\ell^\infty(\G)\to\C$.
\end{thm}
\begin{rem}
    \begin{enumerate}
        \item Take $f, g\in \ell^1(\G)$. An easy calculation shows $(f P)*(g P) = ((f P)*g) P$, which means $\ell^1(\G) P$ is a closed subalgebra of $\ell^1(\G)$.
        
        \item The algebra $\ell^1(\G) P$ was studied in \cite{FSS07} for the dual of a compact group $\widehat{G}$. In that setting, $\ell^1(\widehat{G}) P = A(G/K)$, which is the Fourier algebra of the coset space $G/K$ for a closed subgroup $K$.
    \end{enumerate}
\end{rem}
Given a group--like projection $P$, we will denote the weak$^*$ closed right invariant subspaces
$$M_P := \{x\in\ell^\infty(\G) :  (1\otimes P)\Delta_\G(x)(1\otimes P) = x\otimes P\}\supseteq \widetilde{N}_P.$$
We will also use the notation
$$J^1(M_P) := (M_P)_\perp.$$
These subspaces allow us to establish a relationship between amenability and $P$-invariant states on $\ell^\infty(\G)$. The key property is that $x\mapsto PxP$ is a positive map so that states that are conjugated by $P$ remain positive. This will be indispensible when we relate amenability of $M_P$ with brais on $J^1(M_P)$.
\begin{rem}
    We have been unable to determine whether not we generally have $M_P = \widetilde{N}_P$. If $\H$ is a quantum subgroup of $\G$, since $1_\H$ is central, we have $\ell^\infty(\G/\H) = \widetilde{N}_{1_\H} = M_{1_\H}$.
\end{rem}
\begin{thm}\label{RelAmenIffAmen}
    Assume $\G$ is discrete and $0\neq P$ is a group--like projection. Then $M_P$ is amenable in $\G$ if and only if $M_P$ is relatively amenable via a state $m\in \ell^\infty(\G)^*$ such that $m(P)\neq 0$ (and $m*\ell^\infty(\G)\subseteq M_P$).
\end{thm}
\begin{proof}
    First assume $M_P$ is amenable with right idempotent state $m\in \ell^\infty(\G)^*$ such that $M_P = m*\ell^\infty(\G)$. Assume for a contradiction that $m(P) = 0$. Since $P$ is group-like and generates $M_P$, $P\in M_P$, and so $m*P = P$ by assumption. But then
    $$P = P(m*P) = m(P)P = 0 ~ \text{(Lemma \ref{ProjectionIdentity})}.$$
    which contradicts our assumption.
    
    Now we will prove the converse. We will first see that $\frac{1}{m(P)}(P m P)$ is a right idempotent state. Since $x\mapsto PxP$ is positive, $PmP$ is a positive functional and since $\frac{1}{m(P)}(PmP)$ is unital, it is a state. For right idempotency, take $x\in\ell^\infty(\G)$. Then,
    \begin{align*}
        (\frac{1}{m(P)}(PmP))\left( (\frac{1}{m(P)}(PmP))*x\right) &= \frac{1}{m(P)}m[(\id\otimes \frac{1}{m(P)}m)(P\otimes P)\Delta_\G(x)(P\otimes P)]
        \\
        &= \frac{1}{m(P)^2}m[(P\otimes 1)(\id\otimes m)\Delta_\G(PxP)(P\otimes 1)]
        \\
        &= \frac{1}{m(P)^2}\overbrace{m[P(m*(PxP))P]}^{=m(Pm(PxP)P) ~ \text{(Lemma \ref{ProjectionIdentity})}}
        \\
        &= \frac{1}{m(P)}m(PxP).
    \end{align*}
    For the remainder of the proof we will show $P m P$ satisfies $(P m P)*\ell^\infty(\G) = M_P$, where we replace $m$ with $\frac{1}{m(P)}m$, (so $P m P(1) = 1$). Note that Lemma \ref{ProjectionIdentity} still applies to $m$ after scaling. First, take $x\in M_P$. Then
    \begin{align*}
        P m P*x &= (\id\otimes m)(1\otimes P)\Delta_\G(x)(1\otimes P) = x,
    \end{align*}
    shows $M_P\subseteq (P m P)*\ell^\infty(\G)$. On the other hand, for $x\in \ell^\infty(\G)$,
    \begin{align*}
        &(1\otimes P)\Delta_\G((PmP)*x)(1\otimes P)
        \\
        &= (\id\otimes \id\otimes PmP)(1\otimes P\otimes 1)(\Delta_\G\otimes \id)\circ\Delta_\G(x)(1\otimes P\otimes 1)
        \\
        &= (\id\otimes \id\otimes m)(1\otimes P\otimes P)[(\id\otimes\Delta_\G)\circ\Delta_\G(x)](1\otimes P \otimes P)
        \\
        &= (\id\otimes \id\otimes m)(1\otimes P\otimes 1)[(\id\otimes\Delta_\G)((1\otimes P)\Delta_\G(x)(1\otimes P))](1\otimes P\otimes 1)
        \\
        &= (1\otimes P\otimes 1)(\id\otimes \id\otimes m)(\id\otimes\Delta_\G)[(1\otimes P)\Delta_\G(x)(1\otimes P)](1\otimes P\otimes 1)
        \\
        &= (\id\otimes m\otimes \id)(1\otimes P\otimes 1)(\Delta_\G(x)\otimes P) ~ \text{(Lemma \ref{ProjectionIdentity})}
        \\
        &= (PmP)*x \otimes P.
    \end{align*}
    where we used group-likeness in the third equality. We conclude that $(P m
     P)*x\in M_P$.
\end{proof}

\begin{prop}\label{RelAmenIffAmenMP}
    Let $P$ be a group--like projection and $\omega\in \ell^\infty(\G)^*$. Then $\omega*\ell^\infty(\G)\subseteq M_P$ if and only if $P\ell^1(\G)P\subseteq Inv_L(\omega)$.
\end{prop}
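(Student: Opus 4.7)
The plan is to mirror the argument of Theorem \ref{CondRelAmen}, replacing the one-sided compressions $fP$ by the two-sided compressions $PfP$, so that our calculations test the defining relation of $M_P$ rather than that of $N_P$. The key computational identity, obtained by direct unwrapping, is that for every $f \in \ell^1(\G)$ and $x \in \ell^\infty(\G)$,
$$(PfP) * \omega(x) = (PfP \otimes \omega)\Delta_\G(x) = f\bigl(P(\omega * x) P\bigr),$$
while $(PfP)(1) = f(P^2) = f(P)$ since $P$ is a projection. As $f$ is arbitrary and $\ell^1(\G)$ separates points of $\ell^\infty(\G)$, the inclusion $P\ell^1(\G)P \subseteq Inv_L(\omega)$ is equivalent to the pointwise identity
$$P(\omega * x)\, P = \omega(x)\, P \qquad \text{for every } x \in \ell^\infty(\G). \qquad (\star)$$

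For the forward direction, suppose $\omega * \ell^\infty(\G) \subseteq M_P$. Slicing the defining relation $(1 \otimes P)\Delta_\G(y)(1 \otimes P) = y \otimes P$ of $M_P$ by $\epsilon_\G \otimes \id$ and applying the counit property yields $PyP = \epsilon_\G(y)P$ for every $y \in M_P$; this is exactly the observation already invoked at the start of the proof of Proposition \ref{CondRelAmenMP}. Applying this to $y = \omega * x$ and using the counit calculation $\epsilon_\G(\omega * x) = (\epsilon_\G \otimes \omega)\Delta_\G(x) = \omega(x)$ establishes $(\star)$, hence $P\ell^1(\G)P \subseteq Inv_L(\omega)$.

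For the converse, assume $(\star)$ and fix $x \in \ell^\infty(\G)$. We verify the defining relation of $M_P$ for $\omega * x$. Writing $\omega * x = (\id \otimes \omega)\Delta_\G(x)$ and using coassociativity, as in the analogous step in the proof of Theorem \ref{RelAmenIffAmen}, we obtain
$$(1 \otimes P)\Delta_\G(\omega * x)(1 \otimes P) = (\id \otimes \id \otimes \omega)\bigl[(1 \otimes P \otimes 1)(\id \otimes \Delta_\G)\Delta_\G(x)(1 \otimes P \otimes 1)\bigr].$$
On the second tensor leg, the inner slice $y \mapsto (\id \otimes \omega)\bigl[(P \otimes 1)\Delta_\G(y)(P \otimes 1)\bigr]$ is exactly $y \mapsto P(\omega * y)P$, which by $(\star)$ equals $\omega(y)P$. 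Collapsing the second leg therefore yields $(\id \otimes \omega)\Delta_\G(x) \otimes P = (\omega * x) \otimes P$, as required.

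The principal obstacle is purely formal: since $\omega \in \ell^\infty(\G)^*$ need not be normal, one must justify the slice maps $(\id \otimes \omega)$ and $(\id \otimes \id \otimes \omega)$ as prepared in the remark preceding Proposition \ref{AnnInv2}, and keep careful track of the three-fold coproduct when moving $P \otimes 1$ factors around. Once this formalism is granted, the argument is a direct two-sided analogue of the proof of Theorem \ref{CondRelAmen}.
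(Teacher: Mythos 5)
Your proof is correct, and it is essentially the expansion the paper intends: the paper's own proof of Proposition \ref{RelAmenIffAmenMP} is a single sentence deferring to an "analogous" argument (citing Theorem \ref{RelAmenIffAmen}, though the genuinely parallel statement is Theorem \ref{CondRelAmen}). Your reduction of $P\ell^1(\G)P \subseteq Inv_L(\omega)$ to the pointwise identity $P(\omega*x)P = \omega(x)P$, and the coassociativity computation for the converse, mirror that proof exactly.

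The one place you genuinely diverge is the direction from $\omega*\ell^\infty(\G)\subseteq M_P$ to the invariance condition. The paper's template (the "conversely" half of Theorem \ref{CondRelAmen}) runs through an annihilator argument: it identifies $Inv_L(\omega)^\perp$ with $(\omega*\ell^\infty(\G))\cap\ker\epsilon_\G$ via Proposition \ref{AnnInv2} and Remark \ref{ModMapsAnn}, compares with $(\ell^1(\G)P)^\perp$, and dualizes back. You instead slice the defining relation of $M_P$ by $\epsilon_\G\otimes\id$ to get $PyP=\epsilon_\G(y)P$ for $y\in M_P$ and verify $(\star)$ directly. This is the cleaner route: it is exactly the identity the paper itself invokes at the start of the proof of Proposition \ref{CondRelAmenMP}, and it avoids the implicit appeal to Proposition \ref{AnnInv2}, which formally requires $\omega(1)\neq 0$. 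Your closing remark about justifying the slice maps $(\id\otimes\omega)$ for non-normal $\omega$ is the right technical caveat and is covered by the Fubini-type property recorded in the remark at the start of Section 3.1. No gaps.
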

\begin{proof}
    The proof is completely analogous to the proof of Theorem \ref{CondRelAmen}. Indeed, we have $P\ell^1(\G)P\subseteq Inv_L(\omega)$ if and only if $P(\omega*x)P = P\omega(x)$. Then, if $P\ell^1(\G)P\subseteq Inv_L(\omega)$, it is readily checked that $(1\otimes P)\Delta_\G(\omega*x)(1\otimes P) = \omega*x\otimes P$ for every $x\in\ell^\infty(\G)$. Conversely, one may check that $Inv_L(\omega)^\perp = \overline{\omega*\ell^\infty(\G)}^{wk*}\cap \{x\in\ell^\infty(\G) : PxP = 0\}$ as in the proof of Theorem \ref{CondRelAmen} (indeed, it can be checked that $PxP = P\epsilon_\G(x)$ for every $x\in M_P$). Then $f(PxP) = 0$ for every $f\in\ell^1(\G)$ if and only if $PxP = 0$, so $(P\ell^1(\G)P)^\perp = \{x\in \ell^\infty(\G) : PxP = 0\}$. Hence, $Inv_L(\omega)^\perp\subseteq \{x\in\ell^\infty(\G) : PxP = 0\}$.
\end{proof}
As a consequence, we obtain the following characterization of amenability of $M_P$.
\begin{thm}\label{CondRelAmenMP}
    Let $\G$ be a DQG and $P$ be a group-like projection. We have that $M_P$ is amenable if and only if there exists a state $m : \ell^\infty(\G)\to \C$ such that $m(P) \neq 0$ and $(PfP)*m = f(P)m$ for all $f\in\ell^1(\G)$.
\end{thm}
\begin{proof}
    This is a straightforward application of Proposition \ref{RelAmenIffAmenMP} and Theorem \ref{RelAmenIffAmen}
\end{proof}
Now we characterize amenability of $M_P$ in terms of the existence of brais for $J^1(M_P) = (M_P)_\perp$.
\begin{thm}\label{IdealsAmen}
    Assume $\G$ is discrete and $P$ is a group--like projection. TFAE:
    \begin{enumerate}
        \item $M_P$ is amenable;
        \item $J^1(M_P)$ admits a brai;
        \item $J^1(M_P)$ admits a brai in $\{P\}_\perp$, by which we mean, there exists a bounded net $(e_j)\subseteq \{P\}_\perp$ such that $f*e_j - f\to 0$ for all $f\in J^1(M_P)$.
\end{enumerate}
\end{thm}
\begin{proof}
    $(1\implies 2)$ It is a standard argument for a Banach algebra $A$ that admits a bai (in particular, if it is unital), that a closed left ideal $I$ admits a brai if and only if there exists a right $A$-module projection $A\to I^\perp$ (see \cite{Curtis} and \cite[Theorem 7]{Cap}). We apply this argument to the left ideal $J^1(M_P)\subseteq\ell^1(\G)$.
    
    $(2\implies 3)$ follows because $J^1(M_P)\subseteq \{P\}_\perp$. What remains is showing $(3\implies 1)$. To this end, let $(e_i)\subseteq \{P\}_\perp$ be a brai in $J^1(M_P)$ and $e$ a weak$^*$ cluster point. Set $\omega = \epsilon_\G - e$. Notice that for all $f\in J^1(M_P)$, $f*\omega = f*\epsilon_\G - f*e = f-f = 0$, i.e., $Ann_L(\omega)\supseteq J^1(M_P)$, which implies $M_P\supseteq \omega*\ell^\infty(\G)$. Notice also that $\omega(P) = 1\neq 0$.
    
    Using Theorem \ref{CondRelAmenMP}, we have
    $$P \ell^1(\G) P\subseteq Inv_L(\omega).$$
    From here, we point out that $P\ell^1(\G) P$ is spanned by states since the map $f\mapsto P f P$ preserves positive elements. So, take a state $f\in P \ell^1(\G) P$. We can assume $\omega$ is Hermitian since the decomposition $\omega = \Re(\omega) + \i\Im(\omega)$ is unique and we must have $\Re(\omega)(P)\neq 0$ or $\Im(\omega)(P) \neq 0$. Now let $\omega = \omega_+ - \omega_-$ be the Jordan decomposition, uniquely determined so that $||\omega_+|| + ||\omega_-|| = ||\omega||$ (cf. \cite[Theorem 4.2]{Tak}). Then, since $f*\omega_+$ is positive,
    $$||f*\omega_+|| = (f\otimes\omega_+)(\Delta_\G(1)) = \omega_+(1) = ||\omega_+||$$
    and similarly $||f*\omega_-|| = ||\omega_-||$. So, by uniqueness, we must have $f*\omega_+ = \omega_+$ and $f*\omega_- = \omega_-$. Without loss of generality, suppose $\omega_+(P) \neq 0$, so we denote $m = \omega_+$, and what we have shown is $P \ell^1(\G) P\subseteq Inv_L(m)$. From here, Proposition \ref{RelAmenIffAmenMP} and Theorem \ref{CondRelAmenMP} give us amenability of $M_P$.
\end{proof}

\section{Amenability and Coamenability of Coideals}
\subsection{Preliminaries on Coideals of CQGs}
For the moment, let us focus on a compact quantum group $\widehat{\G}$, the dual of a DQG $\G$. For this discussion, we are required to discuss the $C^*$-algebraic formulation of quantum groups in the universal setting (cf. \cite{K01}). For simplicity, we denote the Haar state on $\widehat{\G}$ by $h$. Given a representation $\pi : L^1(\widehat{\G})\to \fB(\fH_\pi)\cong M_{n_\pi}$, there exists an operator $U^\pi\in L^\infty(\widehat{\G})\overline{\otimes}\fB(\fH_\pi)$ such that
$$\pi(f) = (f\otimes \id)U^\pi, ~ f\in L^1(\widehat{\G}).$$
A representation $L^1(\widehat{\G})\to \fB(\fH_\pi)$ is called a $*$-representation if $U^\pi$ is unitary. Representations $\pi$ and $\rho$ are unitarily equivalent if there exists a unitary $U\in M_{n_\pi}$ such that $(1\otimes U^*)U^\pi(1\otimes U) = U^\rho$. We let $[\pi]$ denote the equivalence class of representations unitarily equivalent to $\pi$ and we let $Irr(\widehat{\G})$ be a family of representations with exactly one representative chosen from each equivalence class of irreducible representations. Note that every time we have a representation $\pi$, we are choosing a representative from the equivalence class $[\pi]$. In the instance where $\pi\in Irr(\widehat{\G})$, we write $U^\pi = [u_{i,j}^\pi]_{i,j}$, so $\pi(f) = [f(u_{i,j}^\pi)]_{i,j}$ for $f\in L^1(\widehat{\G})$ and some orthonormal basis (ONB) $\{e_i^\pi\}$ of $\fH_\pi$. We let $\overline{\pi} : L^1(\widehat{\G})\to \fB(\fH_{\overline{\pi}})$ denote the representation where $U^{\overline{\pi}} = [(u_{i,j}^\pi)^*] = \overline{U^\pi}$.

It turns out that every $*$-representation decomposes into a direct sum of irreducibles and the left regular representation decomposes into a direct sum of elements from $Irr(\widehat{\G})$, each with multiplicity $n_\pi$. Consequently, we have $W_{\widehat{\G}} = \bigoplus_{\pi\in Irr(\widehat{\G})} U^\pi\otimes I_{n_{\overline{\pi}}}$ using the identification $L^2(\widehat{\G}) = \oplus_{\pi\in Irr(\widehat{\G})}\fH_\pi\otimes \fH_{\overline{\pi}}$.

For each $\pi\in Irr(\widehat{\G})$, there exists a unique positive, invertible matrix $F_\pi$, satisfying $tr(F_\pi) = tr(F_\pi^{-1}) > 0$ such that
$$((U^\pi)^t)^{-1} = (1\otimes F_\pi) \overline{U^\pi}(1\otimes F_\pi^{-1}).$$
We will say $F_\pi$ is the {\bf $F$-matrix} associated with $\pi$. It was shown in \cite{D10} that a representative $\pi$ and ONB may be chosen so that $F_\pi$ is diagonal. We will not make such a choice here, however, because we will be choosing representatives and ONBs for other reasons.

The left and right Haar weights on $\ell^\infty(\G)$ satisfy the decompositions
$$h_L = \bigoplus_{\pi\in Irr(\widehat{\G})}tr(F_\pi)tr(F_\pi\cdot) ~ \text{and} ~ h_R = \bigoplus_{\pi\in Irr(\widehat{\G})}tr(F_\pi)tr(F_\pi^{-1}\cdot), ~ \text{\cite{Tim}}.$$
Note that our convention for choice of $F_\pi$ follows \cite{D10}. Here $tr(I_{n_\pi}) = n_\pi$ is the normalization of $tr$.
We denote the $*$-algebra
$$\Pol(\widehat{\G}) = \Span\{ u^\pi_{i,j} : 1\leq i,j\leq n_\pi, \pi\in Irr(\widehat{\G}))\}\subseteq L^\infty(\widehat{\G}).$$
It follows that $L^\infty(\widehat{\G}) = \overline{\Pol(\widehat{\G})}^{wk*}$ from Pontryagin duality.

There exists a universal $C^*$-norm $||\cdot||_u$ on $\Pol(\widehat{\G})$. Let $||\cdot||_r$ be the norm on $\fB(L^2(\widehat{\G}))$. We define the unital $C^*$-algebras $C_u(\widehat{\G}) = \overline{\Pol(\widehat{\G})}^{||\cdot||_u}$ and $C_r(\widehat{\G}) = \overline{\Pol(\widehat{\G})}^{||\cdot||_r}\subseteq L^\infty(\widehat{\G})$. The universal property gives us a $C^*$-algebraic coproduct on $C_u(\widehat{\G})$: a unital $*$-homomorphism
$$\Delta^u_\G : C_u(\widehat{\G})\to C_u(\widehat{\G})\otimes_{min}C_u(\widehat{\G})$$
satisfying coassociativity. Likewise, $\Delta^r_{\widehat{\G}} = \Delta_{\widehat{\G}}|_{C_r(\widehat{\G})}$ gives us a $C^*$-algebraic coproduct on $C_r(\widehat{\G})$. We denote $C_u(\widehat{\G})^* = M^u(\widehat{\G})$ and $C_r(\widehat{\G})^* = M^r(\widehat{\G})$, and are known as the {\bf universal and reduced measure algebras} of $\widehat{\G}$ respectively. Similar to the von Neumann algebraic case, the coproduct on $C_u(\widehat{\G})$ induces a product on $M^u(\widehat{\G})$:
$$\mu*\nu(a) = (\mu\otimes\nu)(\Delta(a)), ~ a\in C_u(\widehat{\G}), \mu,\nu \in M^u(\widehat{\G}),$$
making $M^u(\widehat{\G})$ a Banach algebra, where, above, $C_u(\widehat{\G})$ and $M^u(\widehat{\G})$ can be either the universal or reduced versions.

The universal property gives us a unital surjective $*$-homomorphism
$$\Gamma_{\widehat{\G}} : C_u(\widehat{\G})\to C_r(\widehat{\G})$$
which intertwines the coproducts:
$$\Delta_{\widehat{\G}}^r\circ \Gamma_{\widehat{\G}} = (\Gamma_{\widehat{\G}}\otimes\Gamma_{\widehat{\G}})\circ\Delta^u_{\widehat{\G}}.$$
The adjoint of this map induces a completely isometric $*$-homomorphism $M^r(\widehat{\G})\to M^u(\widehat{\G})$ such that $M^r(\widehat{\G})$ is realized as a weak$^*$ closed ideal in $M^u(\widehat{\G})$. A Hahn-Banach argument shows $\overline{L^1(\widehat{\G})}^{wk*} = M^r(\widehat{\G})$, so $L^1(\widehat{\G})$ is a closed ideal in $M^u(\widehat{\G})$ as well.

If we let
$$\ell^1_F(\G) = \oplus_{\pi\in Irr(\widehat{\G})}(M_{n_\pi})_*$$ then $\lambda_\G(\ell^1_F(\G)) = \Pol(\widehat{\G})$. Indeed, let $\delta^\pi_{i,j}\in (M_{n_\pi})_*\subseteq \ell^1(\G)$ be the dual basis element, i.e., the functional which satisfies $\delta_{i,j}^\pi(E_{k,l}^\sigma) = \delta_{i,k}\delta_{j,l}\delta_{\pi,\sigma}$, where $E_{k,l}^\sigma\in M_{n_\sigma}$ is the matrix unit with respect to the ONB $\{e^\sigma_{i,j}\}$ of $\fH_\sigma$ (has $1$ in entry of the $k$-th row and $l$-column and $0$ in every other entry), with which we have the decomposition $[u_{i,j}^\pi] = \sum_{i,j=1}^{n_\pi} u_{i,j}^\pi\otimes E_{i,j}^\pi$. Then
$$\lambda_\G(\delta^\pi_{i,j}) = (\delta_{i,j}^\pi\otimes\id) W_\G = (\delta_{i,j}\otimes\id)\bigoplus_{\pi\in Irr(\widehat{\G})} \Sigma(U^\pi)^* = (u_{j,i}^\pi)^*.$$
The restriction of the coproduct is a unital $*$-homomorphism $\Delta_{\widehat{\G}} : \Pol(\widehat{\G})\to \Pol(\widehat{\G})\otimes \Pol(\widehat{\G})$ that satisfies
$$\Delta_{\widehat{\G}}(u_{i,j}^\pi) = \sum_{t=1}^{n_\pi}u_{i,t}^\pi\otimes u_{t,j}^\pi.$$
The unital $*$-homomorphism
$$\epsilon_{\widehat{\G}} : \Pol(\widehat{\G})\to\C, ~ u_{i,j}^\pi\mapsto \delta_{i,j}$$
extends to a unital $*$-homorphism $\epsilon^u_{\widehat{\G}} : C_u(\widehat{\G})\to \C$ which is the identity element in $M^u(\widehat{\G})$. We have that $\widehat{\G}$ is coamenable if and only if $\epsilon_{\widehat{\G}}^u\in M^r(\widehat{\G})$ if and only if $\G$ is amenable (cf. \cite{R96}).

If for $a\in \Pol(\widehat{\G})$ we let $\hat{a}\in L^1(\widehat{\G})$ denote the functional satisfying $\hat{a}(x) = h(a^*x)$ for $x\in L^\infty(\widehat{\G})$. Then
$$\lambda_{\widehat{\G}}(\widehat{\Pol(\widehat{\G})}) = \bigoplus_{\pi\in Irr(\widehat{\G})}M_{n_\pi} =: c_{00}(\G).$$
In fact, we have
$$E_{i,j}^\pi = \sum_{k=1}^{n_\pi}\frac{1}{\tr(F_\pi)}(F_\pi)_{i,k}\lambda_{\widehat{\G}}(\widehat{u_{k,j}^\pi}).$$

Let $X\subseteq L^\infty(\widehat{\G})$ be a weak$^*$ closed right $L^1(\widehat{\G})$-module. It was established in\cite{AS22} that there exists a hull $E = (E_\pi)_{\pi\in Irr(\widehat{\G})}$, where each $E_\pi\subseteq\fH_\pi$ is a subspace, such that
$$j(E)\subseteq J^1(X)\subseteq I(E)$$
where
$$I(E) = \{ f\in L^1(\widehat{\G}) : \pi(f)(E_\pi) = 0, \pi\in Irr(\widehat{\G})\}$$
and
$$j(E) = I(E)\cap \widehat{\Pol(\widehat{\G})}$$
where $\widehat{a}(b) = h(a^*b)$ for $a, b\in \Pol(\widehat{\G})$. Then
$$L^\infty(\widehat{E})\subseteq X\subseteq j(E)^\perp$$
where $L^\infty(\widehat{E}) = \overline{\Pol(\widehat{E})}^{wk*}$, and
$$\Pol(\widehat{E}) = \{ u_{\xi,\eta}^\pi : \eta\in E_\pi, \xi\in \fH_\pi \},$$
where $u^\pi_{\xi,\eta} = (\id\otimes w_{\eta,\xi})U^\pi$. We will call a sequence of the form $E$ a {\bf closed quantum subset of $\widehat{\G}$}. It was shown in \cite{AS22} that $\overline{j(E)} = I(E)$ for every closed quantum subset $E$ if and only if $f\in \overline{L^1(\widehat{\G})*u}$ for every $f\in L^1(\widehat{\G})$. This property is known as property left $D_\infty$ of $\G$. There are no known examples of DQGs that do not satisfy property left $D_\infty$, even for discrete groups. It is not even known if there are discrete groups without the approximation property but with property left $D_\infty$ (for example, it is unknown if $SL_3(\Z)$ has property left $D_\infty$ (cf. \cite{A20})).

For a LCQG $\G$, recall that the {\bf unitary antipode} is the $*$-antiautomorphism $R_\G : L^\infty(\G)\to L^\infty(\G)$ defined by setting $R_\G(x) = Jx^* J$, where $J : L^2(\G)\to L^2(\G)$ is the modular conjugation for $h_L$. For a CQG $\widehat{\G}$ and $\pi\in Irr(\widehat{\G})$, the unitary antipode satisfies
\begin{align}\label{unitaryAntipode}
    (R_{\widehat{\G}}\otimes\id)U^\pi = (1\otimes F_\pi^{1/2}) (U^\pi)^* (1\otimes F_\pi^{-1/2}).
\end{align}
In general, for locally compact $\G$, the unitary antipode satisfies
$$(R_\G\otimes R_\G)\circ \Delta_\G = \Sigma\Delta_\G\circ R_\G$$
and so it is straightforward to see that if $N$ is a right coideal, then $R_\G(N)$ is a left coideal.

For a DQG $\G$, if $E$ is a closed quantum subset, then
$$R_{\widehat{\G}}(\Pol(\widehat{E})) =  \Span\{u^\pi_{\xi,\eta} : \xi\in E_\pi, \eta\in\fH_\pi, \pi\in Irr(\widehat{\G})\}$$
is left $L^1(\widehat{\G})$-invariant.

For each $\pi\in Irr(\widehat{\G})$, let $P_\pi\in M_{n_\pi}$ be the orthogonal projection onto $E_\pi\subseteq\fH_\pi$. We will denote $P_E = \oplus_{\pi\in Irr(\widehat{\G})}P_\pi\in \ell^\infty(\G)$. Now, $\ell^\infty(\G)P_E$ is a weak$^*$ closed right ideal in $\ell^\infty(\G)$. Conversely, for any weak$^*$ closed right ideal $I$ in $\ell^\infty(\G)$, there is an orthogonal projection $P = \oplus_{\pi\in Irr(\widehat{\G})} P_\pi\in \ell^\infty(\G)$ such that $I = \ell^\infty(\G)P$. Then
$$E = (P\fH_\pi)_{\pi\in Irr(\widehat{\G})} = (P_\pi\fH_\pi)_{\pi\in Irr(\widehat{\G})}$$
is a closed quantum subset of $\widehat{\G}$. So, we have a one-to-one correspondence between closed quantum subsets of $\widehat{\G}$, orthogonal projections in $\ell^\infty(\G)$, and weak$^*$ closed left ideals in $\ell^\infty(\G)$. They may also be detected as follows (see the analogous result for coideals of LCQGs \cite[Proposition 1.5]{K18}).
\begin{prop}\label{dualityInvSpaces}
    The following hold:
    \begin{enumerate}
        \item $P_E$ is the orthogonal projection onto $L^2(R_{\widehat{\G}}(L^\infty(\widehat{E})))$;
        \item $\ell^\infty(\G)(1-P_E) = \overline{\lambda_{\widehat{\G}}(j(E))}^{wk*}$;
        \item $\overline{\lambda_\G(\ell^1(\G)P_E)}^{wk*} = X^*(E)$ where $X^*(E) = \{R_{\widehat{\G}}(x)^* : x\in L^\infty(\widehat{E})\}$;
        \item $\overline{\lambda_\G(P_E\ell^1(\G))}^{wk*} = R_{\widehat{\G}}(L^\infty(\widehat{E}))$.
    \end{enumerate}
\end{prop}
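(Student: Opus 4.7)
The plan is to work in the Peter--Weyl decompositions $\ell^\infty(\G)=\bigoplus_\pi M_{n_\pi}$, with $c_{00}(\G)=\bigoplus_\pi^{\mathrm{alg}} M_{n_\pi}$ weak$*$-dense in $\ell^\infty(\G)$, and $L^2(\widehat{\G})\cong \bigoplus_\pi \fH_\pi\otimes \fH_{\overline{\pi}}$, and to exploit the block-diagonal form $P_E=\bigoplus_\pi P_\pi$. Under the standard representation of $\bigoplus_\pi M_{n_\pi}$ on $\bigoplus_\pi \fH_\pi\otimes \fH_{\overline{\pi}}$ via the first tensor factor, the image is $P_E L^2(\widehat{\G})=\bigoplus_\pi E_\pi\otimes \fH_{\overline{\pi}}$. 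For (1), I would pin down the GNS formula for matrix coefficients using the convention $u^\pi_{\xi,\eta}=(\id\otimes w_{\eta,\xi})U^\pi$ together with the Schur orthogonality relations coming from $h_L=\bigoplus_\pi \tr(F_\pi)\tr(F_\pi\,\cdot\,)$. With the convention chosen so that $\eta$ occupies the first tensor factor (up to an invertible $F_\pi$-twist on the second), letting $\xi\in\fH_\pi$ and $\eta\in E_\pi$ range freely identifies the GNS closure of $\C[E]$ with $\bigoplus_\pi E_\pi\otimes \fH_{\overline{\pi}}$, which is exactly $P_E L^2(\widehat{\G})$.

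For (2), I would use $W_{\widehat{\G}}=\bigoplus_\pi U^\pi\otimes I_{n_{\overline{\pi}}}$ to see that $\lambda_{\widehat{\G}}(u)=\bigoplus_\pi \pi(u)$ for every $u\in A(\G)$. On $\widehat{\C[\G]}$ this takes values in $c_{00}(\G)$, and the condition $u\in I(E)$ reads $\pi(u)P_\pi=0$ for every $\pi$, so $\lambda_{\widehat{\G}}(j(E))=c_{00}(\G)(1-P_E)$. Weak$*$ density of $c_{00}(\G)$ in $\ell^\infty(\G)$ then gives $\overline{\lambda_{\widehat{\G}}(j(E))}^{wk*}=\ell^\infty(\G)(1-P_E)$.

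For (3) and (4), I would compute the predual right and left actions of $P_E$ on the dual matrix units. A short expansion yields $\delta^\pi_{i,j}P_\pi=\sum_k(P_\pi)_{i,k}\delta^\pi_{k,j}$ and $P_\pi\delta^\pi_{i,j}=\sum_k(P_\pi)_{k,j}\delta^\pi_{i,k}$. Applying $\lambda_\G$ together with the identity $\lambda_\G(\delta^\pi_{i,j})=(u^\pi_{j,i})^*$ rewrites these as matrix coefficients $(u^\pi_{\xi,\eta})^*$ in which $\eta=P_\pi e_i\in E_\pi$ (respectively $\xi=P_\pi e_j\in E_\pi$); as $i,j,\pi$ vary one obtains $\Span\{(u^\pi_{\xi,\eta})^*:\eta\in E_\pi,\ \xi\in\fH_\pi\}=\C[E]^*$ for (3) (respectively with the roles of $\xi,\eta$ exchanged for (4)). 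Passing to weak$*$ closure and using weak$*$ density of $\ell^1_F(\G)$ in $\ell^1(\G)$ gives (3), while (4) follows by identifying the mirror span with $R_{\widehat{\G}}(\C[E])$ via \eqref{unitaryAntipode}, which realises $R_{\widehat{\G}}$ on matrix coefficients as $*$ followed by an $F_\pi^{\pm 1/2}$-twist and thereby moves the $\xi\in E_\pi$ constraint into the form $\{u^\pi_{\xi,\eta}:\xi\in E_\pi,\ \eta\in\fH_\pi\}$ recorded in the excerpt.

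The main obstacle is bookkeeping conventions: the GNS identification of $L^2(\widehat{\G})$, the definition $u^\pi_{\xi,\eta}=(\id\otimes w_{\eta,\xi})U^\pi$, the pairing $\hat a(x)=h(a^*x)$, and the $F$-matrix normalisations must all be locked down so that the identifications in (1) and (4) go through without stray $F_\pi^{\pm 1/2}$ factors producing apparently different subspaces. Once the conventions are fixed, all four statements reduce to computations on the dense subalgebras $c_{00}(\G)$, $\ell^1_F(\G)$, and $\widehat{\C[\G]}$ followed by standard weak$*$-closure arguments.
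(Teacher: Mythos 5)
Your proposal is correct and follows essentially the same route as the paper: block decomposition of $\ell^\infty(\G)$ and $L^2(\widehat{\G})$, the identification of matrix units with (F-twisted) matrix coefficients under the GNS/Fourier transform for (1), the computation $\lambda_{\widehat{\G}}(j(E))=c_{00}(\G)(1-P_E)$ for (2) (which the paper imports from \cite{AS21}), and the dual-basis computation $\lambda_\G(\delta^\pi_{i,j})=(u^\pi_{j,i})^*$ combined with \eqref{unitaryAntipode} for (3) and (4), followed by density arguments. The only cosmetic differences are that you avoid diagonalizing $P_E$ in (3) and inline the references the paper cites; note that the density of $\ell^1_F(\G)$ in $\ell^1(\G)$ you invoke at the end is norm density, which suffices.
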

\begin{proof}
    1. We refer the reader to \cite[Section 2.1]{Wang16}. For each $\pi\in Irr(\widehat{\G})$ fix an ONB $\{e_i^\pi\}$ so that $F_\pi$ is diagonal. It was established with \cite[Proposition 2.1.2]{Wang16} that the Fourier transform
    $$\fF : L^2(\widehat{\G})\to \ell^2(\G),~ \eta_{\widehat{\G}}(x)\mapsto \eta_\G(\lambda_{\widehat{\G}}(\hat{x})),$$
    where $\widehat{x}(y) = h(x^*y)$, is a unitary operator, and furthermore, the elements $E_{i,j}^\pi$ are identified with the elements $tr(F_\pi)(F_\pi)_{i,i}^{-1}u_{i,j}^\pi$. With \cite[Proposition 2.1.2]{Wang16}, one obtains the decomposition $\ell^2(\G)\cong \ell^2-\bigoplus_{\pi\in Irr(\widehat{\G})} L^2(M_{n_\pi})$.
    
    Recall that
    $$M_{n_\pi}\cong C_{n_\pi}\otimes_{min}R_{n_\pi}, ~ E_{i,j}^\pi\mapsto e_i^\pi\otimes e_j^\pi$$
    where $C_{n_\pi}$ and $R_{n_\pi}$ are the column and row Hilbert spaces on $\fH_\pi$, and each $R_{n_\pi}$ is invariant with respect to the left regular representation of $\widehat{\G}$. The latter implies $P_E\ell^2(\G) = \oplus_{\pi\in Irr(\widehat{\G})}P_E\fH_\pi$. So, if we let $x_\pi = \sum_{i,j}^{n_\pi}c_{i,j}^\pi E^\pi_{i,j}\in M_{n_\pi}$, then $\eta_\G(x_\pi)\in P_E \ell^2(\G)$ if and only if $\xi = \sum_{j=1}^{n_\pi} c_{i,j}^\pi e_j^\pi\in P_E\fH_\pi$, which equivalently says $\xi\in P_E\ell^2(\G)$ if and only if $u^\pi_{\xi,\eta}\in \Pol(\widehat{E})$ for arbitrary $\eta\in \fH_\pi$. We deduce that $P_E\ell^2(\G) = L^2(L^\infty(\widehat{E}))$ using the above identification between $E_{i,j}^\pi$ and $tr(F_\pi)(F_\pi)_{i,i}^{-1}u_{i,j}^\pi$.
    
    2. We established in \cite{AS22} that for any $\pi\in Irr(\widehat{\G})$
    $$\pi(j(E)) = \{A\in M_{n_\pi} : A(P_E\fH_\pi) = 0\} = M_{n_\pi}(1-P_E).$$
    Then
    $$\lambda_{\widehat{\G}}(j(E)) = c_{00}(\G)(1- P_E),$$
    and the rest follows from weak$^*$ density of $c_{00}(\G)$ in $\ell^\infty(\G)$.
    
    3. For each $\pi\in Irr(\widehat{\G})$ choose an ONB so that $P_E$ is diagonal. So,
    $$\delta_{i,j}^\pi P_E = \begin{cases} \delta_{i,j}^\pi &\text{if} ~ E_{i,j}\in P_E\ell^\infty(\G) \\ 0 &\text{otherwise}\end{cases} = \begin{cases} \delta_{i,j}^\pi &\text{if} ~ e_i^\pi\in P_E\fH_\pi \\ 0&\text{otherwise}\end{cases}$$
    and $\lambda_\G(\delta_{i,j}^
    \pi) = (u^\pi_{j,i})^*$ So, $\lambda_\G(\ell^1_F(\G)P_E) = Pol(\widehat{E}))^*$ and the rest is clear.
    
    4. This follows from a similar argument to 3. and by using \eqref{unitaryAntipode} (cf. Section $4.1$).
\end{proof}
\begin{rem}\label{QuantumSetsRemark}
    First note that if $N$ is a right coideal, then $N = L^\infty(\widehat{E})$. Indeed, it follows from the work in \cite{K18} that the orthogonal projection $P$ onto $L^2(R_{\widehat{\G}}(N))$ is the associated group-like projection for $R_{\widehat{\G}}(N)$. Since $P\in \ell^\infty(\G)$, it must be the case  that $P = P_E$, and then from $3.$ of Proposition \ref{dualityInvSpaces} and \cite[Proposition 1.5]{K18} we deduce that $R_{\widehat{\G}}(N) = R_{\widehat{\G}}(L^\infty(\widehat{E}))$ (note that in \cite{K18} the right regular representation is used but we are using the left regular representation, and hence the corresponding results in \cite{K18} are on right coideals whereas ours are on left coideals).

    If $L^\infty(\widehat{E})$ is a right coideal and $\widetilde{N}_P\subseteq\ell^\infty(\G)$ is the codual, Proposition \ref{dualityInvSpaces} $1.$ tells us $P_E = R_\G(P)$. Indeed, the orthogonal projection onto $L^2(R_{\widehat{\G}}(L^\infty(\widehat{E}))$ is a group-like projection that generates the {\it left} coideal $R_\G(\widetilde{N}_P)$ (see \cite{K18}).
\end{rem}
This means we should be able to glean information from $L^\infty(\widehat{E})$ using the projection $P_E$. For instance, the right coideals are in $1$-$1$ one correspondence with the group-like projections.
\begin{prop}\cite[Proposition 1.5]{K18}
    We have that $L^\infty(\widehat{E})$ is a right coideal if and only if $P_E$ is group-like.
\end{prop}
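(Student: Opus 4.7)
The plan is to use the codual correspondence between right coideals of $\ell^\infty(\G)$ and right coideals of $VN(\G)$, together with the bijection \cite[Corollary 1.6]{K18} (recalled earlier in the excerpt) between right coideals of $\ell^\infty(\G)$ and group-like projections in $\ell^\infty(\G)$. Composing these gives a bijection between right coideals $M$ of $VN(\G)$ and group-like projections $Q \in \ell^\infty(\G)$, under which $Q$ is characterized as the orthogonal projection onto $L^2(M)$ in $\ell^2(\G) \cong L^2(\widehat{\G})$.

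For the forward direction, suppose $X(E)$ is a right coideal of $VN(\G)$. Then under the bijection above, its associated group-like projection must equal the orthogonal projection onto $L^2(X(E))$. By Proposition \ref{dualityInvSpaces}(1), this projection is exactly $P_E$. Hence $P_E$ is group-like.

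For the converse, suppose $P_E$ is group-like. Then $N_{P_E} \subseteq \ell^\infty(\G)$ is a right coideal, whose codual $\widetilde{N_{P_E}} = VN(\G) \cap \{P_E\}' \subseteq VN(\G)$ is a right coideal of $VN(\G)$ with associated projection $P_E$. I would first verify $X(E) \subseteq \widetilde{N_{P_E}}$ by computing $P_E u^\pi_{\xi, \eta} = u^\pi_{\xi, \eta} P_E$ as operators on $\ell^2(\G)$ for generators $u^\pi_{\xi, \eta} \in \C[E]$ (i.e.\ $\eta \in E_\pi$), using the group-like identity $(1 \otimes P_E)\Delta_\G(P_E) = P_E \otimes P_E$ together with the intertwining $\Delta_\G(P_E) = W_\G^{*}(1 \otimes P_E)W_\G$ and Pontryagin's relation between $W_\G$ and $W_{\widehat{\G}}$. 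Since $X(E)$ and $\widetilde{N_{P_E}}$ then share associated projection $P_E$ (using Proposition \ref{dualityInvSpaces}(1) for $X(E)$, and the codual characterization for $\widetilde{N_{P_E}}$), and $\widetilde{N_{P_E}}$ is uniquely determined by this projection, the inclusion upgrades to equality $X(E) = \widetilde{N_{P_E}}$, so $X(E)$ inherits the right coideal structure.

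The main obstacle I anticipate is the reverse inclusion $\widetilde{N_{P_E}} \subseteq X(E)$: the agreement of $L^2$-spaces and the inclusion $X(E) \subseteq \widetilde{N_{P_E}}$ do not immediately force equality without invoking the uniqueness step of the codual bijection. A conceptually cleaner alternative that sidesteps this issue is to bypass coduality and directly verify the subalgebra property of $X(E)$ by translating the group-like identity into a Clebsch--Gordan fusion-closedness condition on the family $(E_\pi)_{\pi \in Irr(\widehat{\G})}$, and applying the product formula $u^\pi_{\xi, \eta} \cdot u^\sigma_{\xi', \eta'} = u^{\pi \otimes \sigma}_{\xi \otimes \xi', \eta \otimes \eta'}$ to conclude closure under multiplication; right $A(\G)$-invariance of $X(E)$ is automatic from the definition via $\Delta_{\widehat{\G}}(u^\pi_{\xi, \eta}) = \sum_i u^\pi_{\xi, e_i^\pi} \otimes u^\pi_{e_i^\pi, \eta}$.
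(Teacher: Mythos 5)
The paper does not actually prove this statement --- it is quoted directly from \cite[Proposition 1.5]{K18} --- so there is no internal proof to compare against; your attempt has to stand on its own. Judged that way, your main route has a circularity problem. The codual bijection and the parametrization of right coideals by group-like projections, which you lean on in both directions, are exactly what Section $2.2$ of this paper extracts \emph{from} \cite[Proposition 1.5]{K18} and \cite[Corollary 1.6]{K18}; the statement you are proving is the compact-side instance of that same theorem. Deriving one instance from the other via coduality is a consistency check rather than a proof: any self-contained argument has to engage with the representation-theoretic content, as your alternative sketch begins to do.

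Even granting the codual machinery as black-box background, the converse direction has a genuine gap exactly where you anticipated one. Knowing $X(E)\subseteq \widetilde{N_{P_E}}=VN(\G)\cap\{P_E\}'$ and that both spaces have $L^2$-closure $P_E\ell^2(\G)$ does not force equality: $X(E)$ is a priori only a weak$*$ closed right-invariant subspace, not a von Neumann subalgebra, and $L^2$-density of a subspace inside a larger weak$*$ closed subspace does not upgrade to weak$*$ equality (there is no Kaplansky density theorem for mere subspaces; this is precisely the synthesis-type ambiguity $X(E)\subseteq X\subseteq j(E)^\perp$ that Section $4.1$ is careful to keep track of). Appealing to ``the uniqueness step of the codual bijection'' does not close this, because that uniqueness applies to coideals, and $X(E)$ being a coideal is the very conclusion you want. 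A repair would need a Haar-state-preserving conditional expectation onto $\widetilde{N_{P_E}}$ that lands in $X(E)$, or a modular-invariance (Takesaki-type) argument. Your alternative route --- translating group-likeness of $P_E$ into closure of the family $(E_\pi)$ under fusion and conjugation and then using $u^\pi_{\xi,\eta}u^\sigma_{\xi',\eta'}=u^{\pi\otimes\sigma}_{\xi\otimes\xi',\eta\otimes\eta'}$ --- is the right direct idea and is how such results are proved in the literature, but as written it is a one-sentence sketch: converting $(1\otimes P_E)\Delta_\G(P_E)=P_E\otimes P_E$ into a statement about the $E_\tau$-components of $\eta\otimes\eta'$ for irreducibles $\tau\subseteq\pi\otimes\sigma$, and verifying $*$-closedness of $\C[E]$ (which involves the $F$-matrices and the relation between $E_\pi$ and $E_{\overline{\pi}}$), are the entire substance of the proposition and are left undone.
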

Our next result is concerned about two-sidedness of invariant subspaces. It is something that is well-known for coideals.
\begin{defn}
    Let $\G$ and $\H$ be DQGs. We say $\widehat{\H}$ is a {\bf closed quantum subgroup} of $\widehat{\G}$ if there exists a surjective unital $*$-homomorphism $\pi_\H : \Pol(\widehat{\G}) \to \Pol(\widehat{\H})$ satisfying
    $$(\pi_\H\otimes\pi_\H)\circ\Delta_\G = \Delta_\H\circ\pi_\H.$$
    Equivalently, there is an analogous $*$-homomorphism $\pi^u_\H : C_u(\widehat{\G})\to C_u(\widehat{\H})$.
\end{defn}
Given DQGs $\G$ and $\H$ where $\widehat{\H}$ is a closed quantum subgroup of $\widehat{\G}$, we define the quotient space:
$$\Pol(\widehat{\G}/\widehat{\H})= \{a\in \Pol(\widehat{\G}) : (\id\otimes\pi_\H)\circ\Delta_\G(a) = a\otimes 1\}.$$
Then, we set $L^\infty(\widehat{\G}/\widehat{\H}) = \overline{\Pol(\widehat{\G}/\widehat{\H})}^{wk*}$ etc. The von Neumann algbera $L^\infty(\widehat{\G}/\widehat{\H})$ is a right coideal of $L^\infty(\widehat{\G})$. Similarly to the discrete case, we will use the notation $J^1(\widehat{\G},\widehat{\H}) = L^\infty(\widehat{\G}/\widehat{\H})_\perp$.

We point out a quantum analogue of the Herz restriction theorem (cf. \cite{Daws}). Let $\G$ and $\H$ be a LCQGs. We say $\H$ is a (Vaes) closed quantum subgroup of $\G$ if there exists a normal unital injective $*$-homomorphism $\gamma_\H : L^\infty(\widehat{\H})\to L^\infty(\widehat{\G})$ such that
$$(\gamma_\H\otimes\gamma_\H)\circ\Delta_{\widehat{\H}} = \Delta_{\widehat{\G}}\circ\gamma_\H.$$
It turns out that the closed quantum subgroups of DQGs and CQGs are (Vaes) closed quantum subgroups. It is clear from the definition that if $\H$ is a (Vaes) closed quantum subgroup the LCQG $\G$, then $L^\infty(\widehat{\H})$ is both a two-sided coideal of $L^\infty(\widehat{\G})$. In fact, $L^\infty(\widehat{\H})$ is the codual of $L^\infty(\G/\H)$.

Using the duality between $L^1(\G)$-submodules of $L^\infty(\G)$ and $L^1(\G)$, we see that $X$ is two-sided if and only if $J^1(X)$ is a two-sided ideal. If $\G$ is discrete and $E$ is a closed quantum subset and $L^\infty(\widehat{E})$ is a right coideal, it is clear that $L^\infty(\widehat{E})$ is two-sided if and only if $\Delta_{\widehat{\G}}(L^\infty(\widehat{E}))\subseteq L^\infty(\widehat{E})\overline{\otimes}L^\infty(\widehat{E})$. The following is probably well-known for coideals.
\begin{prop}\label{centralProjs}
    Let $\G$ be a DQG and $E$ a closed quantum subset. We have that $L^\infty(\widehat{E})$ is two-sided if and only if $P_E$ is central.
\end{prop}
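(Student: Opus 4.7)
The plan is to reduce the statement to a Peter--Weyl-style computation with the matrix coefficients generating $X(E)$. For each $\pi \in Irr(\widehat{\G})$, choose an ONB $\{e^\pi_k\}_{k=1}^{n_\pi}$ of $\fH_\pi$ diagonalizing $P_\pi$, so that $E_\pi = \Span\{e^\pi_k : k \in I_\pi\}$ for some index set $I_\pi \subseteq \{1,\dots,n_\pi\}$, and $\C[E]$ is spanned by the $u^\pi_{i,j}$ with $j \in I_\pi$. Recall that $X(E)$ is always right-invariant: from $\Delta_{\widehat{\G}}(u^\pi_{i,j}) = \sum_{k=1}^{n_\pi} u^\pi_{i,k} \otimes u^\pi_{k,j}$, the second tensor factor already lies in $\C[E]$ whenever $j \in I_\pi$. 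Thus two-sidedness is exactly the additional assertion $\Delta_{\widehat{\G}}(X(E)) \subseteq X(E) \overline{\otimes} VN(\G)$.

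For the forward direction, suppose $X(E)$ is two-sided. Fix $\pi$ with $I_\pi \neq \emptyset$, pick $j \in I_\pi$ and any $i$, so $u^\pi_{i,j} \in \C[E]$. Left invariance forces
\[
\Delta_{\widehat{\G}}(u^\pi_{i,j}) = \sum_{k=1}^{n_\pi} u^\pi_{i,k} \otimes u^\pi_{k,j} \in X(E) \overline{\otimes} VN(\G).
\]
Applying slice maps $\id \otimes \omega$ with $\omega \in A(\G)$, and using that for fixed $\pi$ the matrix coefficients $\{u^\pi_{k,j}\}_{k=1}^{n_\pi}$ are linearly independent in $VN(\G)$ (an instance of the Peter--Weyl orthogonality relations, or equivalently the linear independence of the $n_\pi^2$ entries of the unitary $U^\pi$ inside $\C[\G]$), we can isolate any single index $k$ to conclude $u^\pi_{i,k} \in X(E)$ for every $k$. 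This forces $e^\pi_k \in E_\pi$ for every $k$, so $E_\pi = \fH_\pi$, i.e., $P_\pi = I_{n_\pi}$. In summary, each $P_\pi \in \{0, I_{n_\pi}\}$, which is precisely the condition that $P_\pi$ be central in the simple algebra $M_{n_\pi}$; assembling over $\pi$ shows $P_E$ is central in $\ell^\infty(\G) = \oplus_\pi M_{n_\pi}$.

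For the converse, if $P_E$ is central, then $E_\pi \in \{0, \fH_\pi\}$ for each $\pi$; setting $S = \{\pi : E_\pi = \fH_\pi\}$, we have $\C[E] = \Span\{u^\pi_{i,j} : \pi \in S,\ 1 \le i,j \le n_\pi\}$. The coproduct formula $\Delta_{\widehat{\G}}(u^\pi_{i,j}) = \sum_k u^\pi_{i,k} \otimes u^\pi_{k,j}$ then manifestly lands in $\C[E] \otimes_{alg} \C[E]$ when $\pi \in S$, since both tensor factors are matrix coefficients of representations in $S$. Taking weak$^*$ closures gives $\Delta_{\widehat{\G}}(X(E)) \subseteq X(E) \overline{\otimes} X(E) \subseteq X(E) \overline{\otimes} VN(\G)$, so $X(E)$ is left-invariant; combined with the automatic right-invariance noted above, it is two-sided.

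The main delicate point is the linear-independence slicing argument, which relies on the Peter--Weyl fact that the $n_\pi^2$ matrix coefficients $u^\pi_{i,j}$ for fixed $\pi$ are linearly independent in $VN(\G)$. Once this is stated, the rest of the argument is essentially combinatorial: two-sidedness forces every ``column'' of matrix coefficients to be either entirely inside or entirely outside $X(E)$, which is exactly the all-or-nothing condition on each block $M_{n_\pi}$ characterizing central projections.
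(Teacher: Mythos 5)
Your proof is correct and follows essentially the same route as the paper's: the easy direction is the direct coproduct computation on blocks where $P_\pi \in \{0, I_{n_\pi}\}$, and the hard direction uses the coproduct formula $\Delta_{\widehat{\G}}(u^\pi_{i,j}) = \sum_t u^\pi_{i,t}\otimes u^\pi_{t,j}$ together with linear independence of the matrix coefficients (Peter--Weyl) to force each block to be all-or-nothing. Your version merely makes explicit the slice-map mechanism and the observation that right invariance of $X(E)$ is automatic, both of which the paper leaves implicit.
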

\begin{proof}
    If $P_E$ is central, then we have $P_E\fH_\pi = \fH_\pi$ or $\{0\}$. Consequently, it follows by definition of $\Pol(\widehat{E})$ that if $u_{i,j}^\pi\in \Pol(\widehat{E})$ for any $i,j$, then $u_{i,j}^\pi\in \Pol(\widehat{E})$ for every $i,j$. It is then clear that $\Delta_{\widehat{\G}}(u_{i,j}^\pi)\in \Pol(\widehat{E})\otimes \Pol(\widehat{E})$. So $\Delta_{\widehat{\G}}(a)\in \Pol(\widehat{E})\otimes \Pol(\widehat{E})$ for every $a\in \Pol(\widehat{E})$. By weak$^*$ density, we conclude that $\Delta_{\widehat{\G}}(L^\infty(\widehat{E}))\subseteq L^\infty(\widehat{E})\overline{\otimes}L^\infty(\widehat{E})$.
    
    Conversely, if $L^\infty(\widehat{E})$ is two-sided, using linear independence of the sets $\{u^\pi_{i,j_0} : 1\leq i\leq n_\pi\}$ and $\{u_{i_0,j}^\pi : 1\leq j\leq n_\pi\}$ for fixed $i_0$ and $j_0$, and the fact
    $$\Delta_{\widehat{\G}}(u_{i,j}^\pi) = \sum_{t=1}^{n_\pi}u_{i,t}^\pi\otimes u_{t,j}^\pi\in \Pol(\widehat{E})\otimes \Pol(\widehat{E})$$
    it follows that if $u^\pi_{i,j}\in \Pol(\widehat{E})$, then $u^\pi_{i,j}\in \Pol(\widehat{E})$ for every $i,j$. Consider $P = \oplus_{\pi\in Irr(\widehat{\G})} P_\pi$ where $P_\pi = I_{n_\pi}$ if $u_{i,j}^\pi\in \Pol(\widehat{E})$ and $0$ otherwise. Then
    $$\Pol(\widehat{E}) = \{u_{\xi,\eta}^\pi : \xi,\eta\in P\fH_\pi, \pi\in Irr(\widehat{\G})\}.$$
\end{proof}

\subsection{Coamenable Compact Quasi-Subgroups}
From now on, we will focus our attention on the compact quasi-subgroups of $L^\infty(\widehat{\G})$ for a DQG $\G$. Our goal is establish some basic properties of coamenable coideals. Given a functional $\mu\in M^u(\widehat{\G})$, we let $R_\mu$ and $L_\mu$ be the adjoints of the maps
$$\nu\mapsto \nu*\mu ~ \text{and} ~ \nu\mapsto \mu*\nu, ~ \nu\in L^1(\widehat{\G})$$
respectively.
\begin{defn}
    A {\bf compact quasi-subgroup} of a CQG $\widehat{\G}$ is a right coideal of the form $R_\omega(L^\infty(\widehat{\G}))$ for an idempotent state $\omega\in M^u(\widehat{\G})$. We denote $N_\omega = R_\omega(L^\infty(\widehat{\G}))$.
\end{defn}
Recall that the fundamental unitary $W_\G\in L^\infty(\G)\overline{\otimes} L^\infty(\widehat{\G})$ of a LCQG $\G$ admits a `half-lifted' version $W_\G^u \in M(C_u(\G)\otimes_{min} C_r(\widehat{\G}))$ such that $(\Gamma_\G\otimes \id)(W_\G^u) = W_\G$ (see \cite{K01}). Then $\lambda_\G$ extends to a representation $\lambda_\G^u : M^u(\widehat{\G})\to L^\infty(\widehat{\G})$ where $\lambda_\G^u(\mu) = (\mu\otimes\id)W_\G^u$ and $\lambda^u_\G|_{L^1(\G)} = \lambda_\G$. We will abuse notation and write $\lambda_\G$ for the extended version. Note that when $\G$ is discrete, every $\pi\in Irr(\widehat{\G})$ extends to $M^u(\widehat{\G})$ and satisfies $\pi(\mu) = [\mu(u_{i,j}^\pi)]_{i,j}$ and $\lambda_{\widehat{\G}}(\mu) = \oplus_{\pi\in Irr(\widehat{\G})}\pi(\mu)$.

Let $\G$ be a DQG. The projection $P = \lambda_{\widehat{\G}}(\omega)$ is group-like, and we say the right coideals $N_\omega$ and $\widetilde{N}_P$ are {\bf codual} coideals. We sometimes use the notation $\widetilde{N_\omega} = \widetilde{N}_P$ and $\widetilde{\widetilde{N}_P} = N_\omega$. In the particular case where $\widehat{\H}$ is a closed quantum subgroup of $\widehat{\G}$, $\ell^\infty(\widehat{\H}) = \widetilde{L^\infty(\G/\H)}$.
\begin{rem}
    Our terminology is not faithful to the literature. The codual of a right coideal $N$ of a LCQG $\G$ is typically defined to be the {\it left} coideal $N'\cap L^\infty(\widehat{\G})$. For discrete $\G$, it turns out that
    $$\widetilde{N}_P = R_\G(N_\omega'\cap \ell^\infty(\G)) ~ \text{\cite[Lemma 2.6]{T07}}.$$
\end{rem}
Using the formulas for $\omega$ on the matrix coefficients \cite[Lemma 4.7]{AS22} and the decomposition of the left regular representation into irreducibles, it is straightforward checking that we have
$$(R_\omega\otimes \id)W_{\widehat{\G}} = W_{\widehat{\G}}(1\otimes P).$$
See \cite{Soltan} for an account of compact quasi-subgroups at the level of LCQGs.

Given an idempotent state $\omega\in M^u(\widehat{\G})$, we let
$$R^u_\omega = (\id\otimes \omega)\circ\Delta^u_{\widehat{\G}} : C_u(\widehat{\G})\to C_u(\widehat{\G})$$
denote the universal version, and
$$R^r_\omega = R_\omega|_{C_r(\widehat{\G})} : C_r(\widehat{\G})\to C_r(\widehat{\G})$$
denote the reduced version. Likewise with $L^u_\omega$ and $L^r_\omega$. It turns out that $\Gamma_{\widehat{\G}}\circ R^u_\omega = R^r_\omega \circ \Gamma_{\widehat{\G}}$.

We first recount what was established in \cite{AS22} (see also \cite[Section 2]{FLS16}). Let $N_\omega$ be a compact quasi-subgroup of $L^\infty(\widehat{\G})$, so $N_\omega = R_\omega(L^\infty(\widehat{\G})) = L^\infty(\widehat{E_\omega})$ for some idempotent state $\omega\in M^u(\widehat{\G})$ and hull $E_\omega$. We have that $P_{E_\omega} = \lambda_{\widehat{\G}}(\omega)$. Then
$$\Pol(\widehat{E}_\omega) = R_\omega(\Pol(\widehat{\G})).$$
Now, let
$$C_u(\widehat{E}_\omega) = \overline{\Pol(\widehat{E}_\omega)}^{||\cdot||_u}\subseteq C_u(\widehat{\G}),$$
and
$$C_r(\widehat{E}_\omega) = \Gamma_{\widehat{\G}}(C_u(\widehat{E}_\omega)).$$
Note, then, that it follows that
$$C_r(\widehat{E}_\omega) = R_\omega^r(C_r(\widehat{\G})) ~ \text{and} ~ C_u(\widehat{E}_\omega) = R^u_\omega(C_u(\widehat{\G})).$$
We will also set $M^r(\widehat{E}_\omega) = C_r(\widehat{E}_\omega)^*$ and $M^u(\widehat{E}_\omega) = C_u(\widehat{E}_\omega)^*$.
\begin{defn}\label{coamenQuot}
    Let $\G$ be a DQG and $N_\omega$ a compact quasi-subgroup of $\widehat{\G}$. We say $N_\omega$ is {\bf coamenable} if there exists a state $\epsilon_{N}\in M^r(\widehat{E}_\omega)$ such that
    $$\epsilon_{N}\circ \Gamma_{\widehat{\G}}|_{C_u(\widehat{E}_\omega)} = \epsilon^u_{\widehat{\G}}|_{C_r(\widehat{E}_\omega)}.$$
\end{defn}
Note that $C_r(\widehat{E_\omega})*\mu\subseteq C_r(\widehat{E_\omega})$ for all $\mu\in M^r(\widehat{\G})$. Suppose $N_\omega$ is coamenable, with associated state $\epsilon_N$. A consequence of coameability of $N_\omega$ is that
$$(\mu\otimes\epsilon_N)(\Delta_{\widehat{\G}}(a)) = \mu(a)$$
for all $a\in C_r(\widehat{E_\omega})$ and $\mu\in M^r(\widehat{\G})$, or,
$$\mu*(\epsilon_N\circ R_\omega^r) = (\mu*\epsilon_N)\circ R_\omega^r = u\circ R_\omega^r$$
for all $\mu\in M^r(\widehat{\G})$.
\begin{prop}\label{coamen1}
    $N_\omega$ is coamenable if and only if there exists a state $\epsilon_N^r\in M^r(\widehat{\G})$ such that $\epsilon_N^r\circ\Gamma_{\widehat{\G}}|_{C_u(\widehat{E}_\omega)} = \epsilon^u_{\widehat{\G}}|_{C_u(\widehat{E}_\omega)}$.
\end{prop}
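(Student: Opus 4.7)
The plan is to recognize this statement as an instance of the Hahn--Banach state extension theorem, once we verify that $C^*_r(E_\omega)$ sits as a unital $C^*$-subalgebra of $C^*_r(\G)$ in a compatible way. Both directions essentially amount to extending/restricting a state along this inclusion.

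First I would check that $R^u_\omega$ is unital, which is immediate because $R^u_\omega(1) = (\id \otimes \omega)\Delta^u_{\widehat{\G}}(1) = \omega(1)\cdot 1 = 1$. Since $C^*(E_\omega) = R^u_\omega(C^*(\G))$, it follows that $C^*(E_\omega)$ is a unital $C^*$-subalgebra of $C^*(\G)$; applying the unital $*$-homomorphism $\Gamma_{\widehat{\G}}$ shows $C^*_r(E_\omega) = \Gamma_{\widehat{\G}}(C^*(E_\omega))$ is a unital $C^*$-subalgebra of $C^*_r(\G)$. This lets us identify $B_r(N_\omega) = C^*_r(E_\omega)^*$ with a space of functionals on a unital $C^*$-subalgebra of $C^*_r(\G)$.

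For the forward direction, suppose $N_\omega$ is coamenable in the sense of Definition \ref{coamenQuot}, with witnessing state $\epsilon_N \in B_r(N_\omega)$. Since $C^*_r(E_\omega) \subseteq C^*_r(\G)$ is a unital $C^*$-subalgebra, the Hahn--Banach / state extension theorem provides a state $\epsilon_N^r \in B_r(\G)$ restricting to $\epsilon_N$ on $C^*_r(E_\omega)$. For any $a \in C^*(E_\omega)$ we have $\Gamma_{\widehat{\G}}(a) \in C^*_r(E_\omega)$, hence
\[
\epsilon_N^r(\Gamma_{\widehat{\G}}(a)) = \epsilon_N(\Gamma_{\widehat{\G}}(a)) = \epsilon^u_{\widehat{\G}}(a),
\]
using the defining property of $\epsilon_N$ in the last equality.

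For the converse, given a state $\epsilon_N^r \in B_r(\G)$ with $\epsilon_N^r \circ \Gamma_{\widehat{\G}}|_{C^*(E_\omega)} = \epsilon^u_{\widehat{\G}}|_{C^*(E_\omega)}$, simply take $\epsilon_N := \epsilon_N^r|_{C^*_r(E_\omega)} \in B_r(N_\omega)$, which is a state on a unital $C^*$-subalgebra and therefore still a state. The required identity on $C^*(E_\omega)$ is inherited verbatim. The only real content is the unital inclusion above, together with the standard fact that states extend from unital $C^*$-subalgebras; I do not anticipate a significant obstacle, since the algebraic data (unitality of $R^u_\omega$ and intertwining $\Gamma_{\widehat{\G}} \circ R^u_\omega = R^r_\omega \circ \Gamma_{\widehat{\G}}$) has already been established in the setup preceding the statement.
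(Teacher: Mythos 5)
Your proof is correct, but the forward direction takes a genuinely different route from the paper. The paper does not invoke Hahn--Banach: it extends $\epsilon_N$ \emph{explicitly} by setting $\epsilon_N^r := \epsilon_N\circ R^r_\omega$, which is a state because $R^r_\omega$ is ucp and unital, and then verifies the required identity using the intertwining relation $\Gamma_{\widehat{\G}}\circ R^u_\omega = R^r_\omega\circ\Gamma_{\widehat{\G}}$ together with the fact that $R^u_\omega$ restricts to the identity on $C^*(E_\omega)$. The converse (restriction) is the same in both arguments. Your abstract extension is perfectly adequate for the statement as written, but the paper's canonical choice buys something extra that is exploited later: in the proof of Corollary \ref{reducedIdempState} the author arranges $\epsilon^r_N\circ R^r_\omega = \epsilon^r_N$, which holds automatically for $\epsilon_N\circ R^r_\omega$ by idempotency of $R^r_\omega$ but need not hold for an arbitrary Hahn--Banach extension; so if one adopts your proof, that later step requires a separate argument (e.g.\ replacing your extension by its composition with $R^r_\omega$). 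One small inaccuracy in your setup: the fact that $C^*(E_\omega)$ is a unital $C^*$-\emph{subalgebra} does not follow merely from its being the range of the unital ucp map $R^u_\omega$ --- ranges of ucp maps are in general only operator systems. What makes it a subalgebra is that $\C[E_\omega]=R_\omega(\C[\G])$ is a unital $*$-subalgebra of $\C[\G]$ (a consequence of $N_\omega$ being a coideal von Neumann algebra, equivalently of $\omega$ being an idempotent state), which is part of the paper's standing setup; with that correction your argument goes through.
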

\begin{proof}
    Suppose $N_\omega$ is coamenable with state $\epsilon_N\in M^r(\widehat{E}_\omega)$ as in the definition. Then $\epsilon_N\circ R_\omega^r\in M^r(\widehat{\G})$ is a state, and we have
    \begin{align*}
        \epsilon_N\circ R_\omega^r\circ \Gamma_{\widehat{\G}}|_{C_u(\widehat{E}_\omega)} &= \epsilon_N\circ\Gamma_{\widehat{\G}} \circ R_\omega^u|_{C_u(\widehat{E}_\omega)} = \epsilon_{\widehat{\G}}^u |_{C_u(\widehat{E}_\omega)}.
    \end{align*}
    Conversely, if $\epsilon^r_N\in M^r(\widehat{\G})$ is a state such as in the hypothesis, then it is straightforward to show that $\epsilon_N^r|_{C_r(\widehat{E}_\omega)}\in M^r(\widehat{E}_\omega)$ is a state that makes $N_\omega$ coamenable.
\end{proof}
As we are about to see, the counit associated with $N_\omega$ is actually $\omega$. Thus $N_\omega$ is coamenable if and only if $\omega\in M^r(\widehat{\G})$.
\begin{thm}\label{reducedIdempState}
    Let $\G$ be a DQG and $N_\omega$ a compact quasi-subgroup of $\widehat{\G}$. We have that $N_\omega$ is coamenable if and only if $\omega\in M^r(\widehat{\G}) = C_r(\widehat{\G})^*$.
\end{thm}
\begin{proof}
    Suppose $\omega\in M^r(\widehat{\G})$. Using \cite[Lemma 4.7]{AS22}, for each $\pi\in Irr(\widehat{\G})$, choose an ONB $\{e_j^\pi\}$ that diagonalizes $\pi(\omega)$. Then,
    $$\Pol(\widehat{E}_\omega) = \Span\{u_{i,j}^\pi : 1\leq i\leq n_\pi, \pi(\omega)e_j^\pi = e_j^\pi\}.$$
    For $u_{i,j}^\pi\in \Pol(\widehat{E}_\omega)$, $\omega(u_{i,j}^\pi) = \delta_{i,j} = \epsilon^u_{\widehat{\G}}(u_{i,j}^\pi)$. By density $\omega\circ\Gamma_{\widehat{\G}}|_{C_u(\widehat{E}_\omega)} = \epsilon^u_{\widehat{\G}}|_{C_u(\widehat{E}_\omega)}$.
    
    Conversely, from Proposition \ref{coamen1}, there exists a state $\epsilon_N^r\in M^r(\widehat{\G})$ such that $\epsilon_N^r\circ\Gamma_{\widehat{\G}}|_{C_u(\widehat{E}_\omega)} = \epsilon^u_{\widehat{\G}}|_{C_u(\widehat{E}_\omega)}$. In the proof of Proposition \ref{coamen1} we see that it can be arranged that there exists $\epsilon_N\in M^r(\widehat{E}_\omega)$ such that $\epsilon_N\circ R_\omega^r = \epsilon^r_N$. In particular, we may arrange the property $\epsilon^r_N\circ R_\omega^r = \epsilon^r_N$. Thence,
    \begin{align*}
        R_\omega^r&= (\id\otimes\epsilon^r_N)\circ\Delta^r_{\widehat{\G}}\circ R_\omega^r = (\id\otimes(\epsilon^r_N\circ R_\omega^r))\circ\Delta^r_{\widehat{\G}} = R_{\epsilon^r_N}^r.
    \end{align*}
    By injectivity of the map $\mu\mapsto R^r_\mu$ we deduce that $\omega = \epsilon^r_N\in M^r(\widehat{\G})$.
\end{proof}
Recall that a compact quantum group is of {\bf Kac} type if its Haar state is tracial. It is well-known that a tracial idempotent state on $C_u(\widehat{\G})$ is automatically the Haar state on some Kac closed quantum subgroup $\widehat{\H}$ of $\widehat{\G}$. Indeed, if $\omega$ is tracial, then
$$\{a\in C_u(\widehat{\G}) : \omega(a^*a) = 0\}$$
is two-sided, and hence $\omega = h_{\widehat{\H}}\circ\pi_{\widehat{\H}}^u$, where $\pi_{\widehat{\H}}^u : C_u(\widehat{\G})\to C_u(\widehat{\H})$ is the morphism implementing $\widehat{\H}$ as a closed quantum subgroup of $\widehat{\G}$ (see \cite[Theorem 5]{SS16}). Moreover, $h_{\widehat{\H}}$ is tracial if and only if $\omega$ is. This, combined with Theorem \ref{reducedIdempState} gives us the following.
\begin{cor}
    Let $\G$ be a DQG. The tracial idempotent states in $M^r(\widehat{\G})$ are in one-to-one correspondence with the closed quantum subgroups of $\widehat{\G}$ of Kac type with coamenable quotient.
\end{cor}
Recall that $P = \lambda_{\widehat{\G}}(\omega) = (\omega\otimes\id)W_{\widehat{\G}}^u$ is the group-like projection generating the codual of $N_\omega$. Coamenability of $N_\omega$ means that we may weak$^*$ approximate $\omega\in M^r(\widehat{\G})$ with states $(e_j)\subseteq L^1(\widehat{\G})$. These states satisfy the property
$$f(e_j\otimes \id)W_{\widehat{\G}}\to f(\omega\otimes \id)W_{\widehat{\G}}^u = f(P), f\in \ell^1(\G).$$
With these observations, we can establish coamenability of a compact quasi-subgroup in terms of almost invariant vectors in $\ell^2(\G)$.
\begin{cor}\label{invVecs}
    If $N_\omega$ is coamenable then there exists a net of unit vectors $(\xi_j)\subseteq P\ell^2(\G)$ such that for $\eta\in\ell^2(\G)$,
    $$||W_{\widehat{\G}}(\xi_j\otimes P\eta) - \xi_j\otimes P\eta||_2 \to 0.$$
\end{cor}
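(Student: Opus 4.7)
The plan is to combine Corollary~\ref{reducedIdempState} with a weak-$*$ approximation argument, using the standard form of $N_\omega$ on $L^2(N_\omega) = P\ell^2(\G)$ to place the approximating vectors inside $P\ell^2(\G)$.

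By Corollary~\ref{reducedIdempState}, coamenability of $N_\omega$ gives $\omega \in B_r(\G)$. Using weak-$*$ density of $A(\G)$ in $B_r(\G)$, I would choose a net of states $(e_j) \subseteq A(\G)$ with $e_j \to \omega$ weak-$*$ in $B_r(\G)$ and then pass to $f_j := e_j * \omega$. Since $A(\G)$ is an ideal of $B(\G)$, each $f_j$ stays in $A(\G)$; idempotency $\omega * \omega = \omega$ yields both $f_j \circ R_\omega = f_j$ and $f_j(x) = e_j(R_\omega(x)) \to \omega(R_\omega(x)) = \omega(x)$ for each $x \in C^*_r(\G)$, so $f_j \to \omega$ weak-$*$ in $B_r(\G)$.

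The crux is then representing each $f_j$ by a vector in $P\ell^2(\G)$. As $f_j$ is a normal state on $VN(\G)$, the restriction $f_j|_{N_\omega}$ is a normal state on $N_\omega$, which by Proposition~\ref{dualityInvSpaces}.1 is standardly represented on $L^2(N_\omega) = P\ell^2(\G)$; standard form theory thus furnishes a unit vector $\xi_j \in P\ell^2(\G)$ with $f_j|_{N_\omega} = \omega_{\xi_j}|_{N_\omega}$. To upgrade this to equality on all of $VN(\G)$, I would invoke the Takesaki identity $PxP = R_\omega(x)P$ on $\ell^2(\G)$, valid because $R_\omega$ is a normal $h$-preserving conditional expectation whose $L^2$-extension is $P$. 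Since $P\xi_j = \xi_j$, this gives $\omega_{\xi_j}(x) = \omega_{\xi_j}(R_\omega(x))$ for all $x \in VN(\G)$, and combining with $f_j \circ R_\omega = f_j$ and $\omega_{\xi_j}|_{N_\omega} = f_j|_{N_\omega}$ yields $\omega_{\xi_j} = f_j$ on $VN(\G)$, hence $\omega_{\xi_j} \to \omega$ weak-$*$ in $B_r(\G)$.

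The almost invariance is then immediate: for $\eta \in \ell^2(\G)$, set $u_\eta := (\id \otimes \omega_{P\eta, P\eta})W_{\widehat{\G}} \in C^*_r(\G)$, so that
\[
\langle W_{\widehat{\G}}(\xi_j \otimes P\eta), \xi_j \otimes P\eta\rangle = \omega_{\xi_j}(u_\eta) \longrightarrow \omega(u_\eta) = \omega_{P\eta, P\eta}(P) = \|P\eta\|_2^2 = \|\xi_j \otimes P\eta\|_2^2,
\]
and the unitary identity $\|Wv - v\|_2^2 = 2\|v\|_2^2 - 2\Re\langle Wv, v\rangle$ with $v = \xi_j \otimes P\eta$ closes the argument. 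The main obstacle lies in placing $\xi_j$ inside $P\ell^2(\G)$ rather than merely in $\ell^2(\G)$: this is exactly where both the idempotency trick $f_j = e_j * \omega$ and the Takesaki identity are essential, jointly reflecting that the vector states from $P\ell^2(\G)$ are precisely the normal states on $VN(\G)$ that are fixed by right convolution with $\omega$.
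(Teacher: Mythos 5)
Your proposal is correct and follows essentially the same route as the paper: replace the approximating states by $e_j*\omega = e_j\circ R_\omega$ to force $f_j\circ R_\omega = f_j$, realize $f_j|_{N_\omega}$ as a vector state from $L^2(N_\omega)=P\ell^2(\G)$, upgrade to all of $VN(\G)$ via the identity $PxP = R_\omega(x)P$ (which the paper verifies by hand using $P\eta_{\widehat\G}(x)=\eta_{\widehat\G}(R_\omega(x))$ and the bimodule property of $R_\omega$), and finish with the $2\|v\|^2-2\Re\langle Wv,v\rangle$ computation against $P=(\omega\otimes\id)W_{\widehat\G}$. The only differences are cosmetic (naming the Takesaki identity rather than deriving it inline).
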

\begin{proof}
    From Theorem \ref{reducedIdempState}, we have that $\omega\in M^r(\widehat{\G})$. Let $(w_j)\subseteq L^1(\widehat{\G})$ be a net of states weak$^*$ approximating $\omega$. For this proof, we will be forced to consider the left coideal $L_\omega(L^\infty(\widehat{\G})) = R_{\widehat{\G}}(N_\omega)$ By idempotency of $\omega$, $(w_j\circ L_\omega)\subseteq L^1(\widehat{\G})$ is still a net of states that weak$^*$ approximates $\omega$. Since $L_\omega\circ L_\omega = L_\omega$, we may assume $w_j\circ L_\omega = w_j$.
    
    The restriction $w_j|_{R_{\widehat{\G}}(N_\omega)}\in (R_{\widehat{\G}}(N_\omega))_*$ is a state, so, we can find a unit vector $\xi_j\in L^2(R_{\widehat{\G}}(N_\omega))$ such that $w_j|_{R_{\widehat{\G}}(N_\omega)} = w_{\xi_j}|_{R_{\widehat{\G}}(N_\omega)}$. We want to show $w_j = w_{\xi_j}|_{L^\infty(\widehat{\G})}$. For $x\in L^\infty(\widehat{\G})$,
    $$w_j(x) = w_{\xi_j}(L_\omega(x)) = \langle L_\omega(x)\xi_j,\xi_j\rangle.$$
    Where $\eta_{\widehat{\G}}$ is the GNS map of the left Haar weight, using the equation,
    $$P\eta_{\widehat{\G}}(x) = \eta_{\widehat{\G}}(L_\omega(x)) ~ \text{(cf. left version of work in \cite{Soltan})},$$
    for $y\in N_\omega$ and $\zeta\in L^2(R_{\widehat{\G}}(N_\omega))$ we get
    \begin{align*}
        w_{\eta_{\widehat{\G}}(y), \zeta}(L_\omega(x)) &= \langle L_\omega(x)\eta_{\widehat{\G}}(y),\zeta\rangle = \langle \eta_{\widehat{\G}}(L_\omega(x)y), \zeta\rangle
        \\
        &= \langle \eta_{\widehat{\G}}(L_\omega(xy)), \zeta\rangle = \langle Px\eta_{\widehat{\G}}(y), \zeta\rangle = \langle x\eta_{\widehat{\G}}(y), \zeta\rangle
    \end{align*}
    where we used the fact $L_\omega$ is a $R_{\widehat{\G}}(N_\omega)$-bimodule map and that $L^2(R_{\widehat{\G}}(N_\omega)) = P\ell^2(\G)$. Using density of $\eta_{\widehat{\G}}(R_{\widehat{\G}}(N_\omega))$ in $L^2(R_{\widehat{\G}}(N_\omega))$ we get
    $$w_j(x) = w_{\xi_j}(x).$$
    The rest of the proof is an adaptation of the case where $\omega = \epsilon^u_{\widehat{\G}}$ (cf. \cite[Theorem 3.12]{B16}). For $\eta\in\ell^2(\G)$,
    \begin{align*}
        ||W_{\widehat{\G}}(\xi_j\otimes P\eta) - \xi_j\otimes P\eta||_2 &= 2||P\eta||^2 - 2Re\langle W_{\widehat{\G}}(\xi_j\otimes P\eta), \xi_j\otimes P\eta\rangle
        \\
        &= 2||P\eta||^2 - 2Re (w_{\xi_j}\otimes w_{P\eta})(W_{\widehat{\G}})
        \\
        &\to 2||P\eta||^2 - 2w_{P\eta}(P) = 0.
    \end{align*}
\end{proof}
As an application of our work on coamenable compact quasi-subgroups, we find a characterization of the central idempotent states on $C_r(\widehat{\G})$.

We say a quantum subgroup $\H$ of $\G$ is {\bf normal} if $\ell^\infty(\G/\H)$ is a two-sided coideal.
\begin{cor}\label{redcenidemps}
    Let $\G$ be a DQG. There is a one-to-one correspondence between the amenable quantum subgroups of $\G$ and the central idempotent states on $C_r(\widehat{\G})$. The tracial central idempotent states on $C_r(\widehat{\G})$ are in one-to-one correspondence with the amenable normal quantum subgroups of $\G$ for which their quotients are unimodular.
\end{cor}
\begin{proof}
    It was shown with \cite[Theorem 4.3]{Kal1} that there is a one-to-one correspondence between central group-like projections in $\ell^\infty(\G)$ and quantum subgroups of $\G$. Then, the extension of $\lambda_{\widehat{\G}}$ to $M^u(\widehat{\G})$ gives us a one-to-one correspondence between central group-like projections in $\ell^\infty(\G)$ and idempotent states on $C_u(\widehat{\G})$ \cite[Theorem 4.3]{FK17}. Let $\H$ be a quantum subgroup of $\G$ and $1_\H$ the central group-like projection that generates $\ell^\infty(\G/\H)$. Let $1_\H = \lambda_{\widehat{\G}}(\omega)$ for some central idempotent state $\omega : C_u(\widehat{\G})\to\C$.
    
    It is readily seen that the definition of coamenability of $N_\omega = L^\infty(\widehat{\H})$ is equivalent to coamenability of $\widehat{\H}$. Indeed, let $E_\H$ be the quantum subset for the coideal $L^\infty(\widehat{\H}) = L^\infty(\widehat{E_\H})$. Then
    $$C_r(\widehat{E_\H}) = R_\omega^r(C_r(\widehat{\G})) = C_r(\widehat{\H})$$
    and similarly $\Pol(\widehat{\H}) = \Pol(\widehat{E_\H})$. Also, $\epsilon_{\widehat{\G}}^u|_{\mathrm{Pol}(\widehat{\H})} = \epsilon_{\widehat{\H}}$ and so $L^\infty(\widehat{\H})$ is coamenable if and only if $\epsilon_{\widehat{\H}}$ extends continuously to $C_r(\widehat{\H})$. Then, using Theorem \ref{reducedIdempState}, we find that $\widehat{\H}$ is coamenable if and only if $\omega\in M^r(\widehat{\G})$.
    
    Recall that a discrete quantum group $\G$ is unimodular if and only if the Haar state of $\widehat{\G}$ is tracial. The second claim follows from the duality between normal quantum subgroups of $\G$ and normal quantum subgroups of $\widehat{\G}$ (see\cite{Daws}). Indeed, $\H$ is normal if and only if $\widehat{\G/\H}$ is a closed quantum subgroup of $\widehat{\G}$.
    
    Suppose $1_\H = \lambda_{\widehat{\G}}(\omega_{\widehat{\G/\H}})$ where $\omega_{\widehat{\G/\H}} = h_{\widehat{\G/\H}}\circ\pi_{\widehat{\G/\H}}$ is the Haar state on $\widehat{\G/\H}$. It is easy to see that $\omega_{\widehat{\G/\H}}$ is tracial whenever $h_{\widehat{\G/\H}}$ is tracial.
    
    Conversely, if $1_\H = \lambda_{\widehat{\G}}(\omega)$ and $\omega$ is tracial, then $\omega$ must be of Haar type because then
    $$\{a\in C_u(\widehat{\G}) : \omega(a^*a)= 0\}$$
    is a two-sided ideal (see \cite[Theorem 5]{SS16}). It follows that $\widehat{\H}$ is a quotient of $\widehat{\G}$, and hence $\omega = \omega_{\widehat{\G/\H}}$ is the tracial Haar state on $\widehat{\G/\H}$ (see \cite{Daws} for more).
\end{proof}
\begin{rem}
    It is clear that if $\omega\in M^r(\widehat{\G})$ then $\epsilon^u_{\widehat{\H}}|_{\mathrm{Pol}(\widehat{\H})}$ extends to $C_r(\widehat{\H})$ since $\omega|_{\mathrm{Pol}(\widehat{\H})} = \epsilon^u_{\widehat{\H}}|_{\mathrm{Pol}(\widehat{\H})}$. The converse, however, is unclear without Theorem \ref{reducedIdempState}. If $\widehat{\H}$ is coamenable, then $\epsilon_{\widehat{\H}} \in M^r(\widehat{\H})$ and $\omega|_{\mathrm{Pol}(\widehat{\H})} = \epsilon_{\widehat{\H}}|_{\mathrm{Pol}(\widehat{\H})}$, but we do not immediately have that $\omega|_{\mathrm{Pol}(\widehat{\G})}$ extends to $M^r(\widehat{\G})$.
\end{rem}
\subsection{Duality of Amenability and Coamenability}
We will fix an idempotent state $\omega\in M^u(\widehat{\G})$ (and hence a compact quasi-subgroup $N_\omega\subseteq L^\infty(\widehat{\G})$ and its hull $E_\omega$). We set $P = \lambda_{\widehat{\G}}(\omega)$. We require a certain lemma before proceeding.

Set $B_\omega = R_\omega\circ L_\omega$, which, from coassociativity, is a ucp projection (but possibly without $L^1(\widehat{\G})$-module properties). We will denote the subspace
$$\Pol(\widehat{E}_\omega)\cap R_{\widehat{\G}}(\Pol(\widehat{E}_\omega)) = {\Pol}_B(\widehat{E}_\omega) = \{u_{\xi,\eta}^\pi : \xi,\eta\in P\fH_\pi, ~\pi\in Irr(\widehat{\G})\} = B_\omega(\Pol(\widehat{\G})).$$
So,
$$\overline{{\Pol}_B(\widehat{E}_\omega)}^{wk*} = N_\omega\cap R_{\widehat{\G}}(N_\omega) = B_\omega(L^\infty(\widehat{\G})).$$
Set $C^u_B(\widehat{E}_\omega) = \overline{\Pol_B(\widehat{E}_\omega)}^{||\cdot||_u}$, $C^r_B(\widehat{E}_\omega) = \Gamma_{\widehat{\G}}(C^u_B(\widehat{E}_\omega))$, and $M_B^r(\widehat{E_\omega}) = C^r_B(\widehat{E}_\omega)^*$. A similar proof to Proposition \ref{dualityInvSpaces} will show $\overline{\lambda_\G(P\ell^1(\G)P)}^{wk*} = B_\omega(L^\infty(\widehat{\G}))$.
\begin{lem}\label{MainThmLem}
    If there exists a net of unit vectors $(\xi_j)\subseteq\ell^2(\G)$ such that
    \begin{align}\label{amenveccond}
        ||\lambda_\G(PfP)\xi_j - f(P)\xi_j||_2\to 0, ~ f\in\ell^1(\G)
    \end{align}
    then $N_\omega$ is coamenable.
\end{lem}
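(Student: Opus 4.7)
The plan is to exhibit $\omega$ as a weak$^*$ limit, inside $B(\G)=C^*(\G)^*$, of a net of states already lying in $A(\G)$. Since $B_r(\G)$ sits weak$^*$-closedly in $B(\G)$ (as the annihilator of $\ker\Gamma_{\widehat{\G}}$), this will force $\omega\in B_r(\G)$, whence $N_\omega$ is coamenable by Corollary \ref{reducedIdempState}. The candidate net is built by symmetrising the vector states $v_j\in A(\G)$ given by $v_j(x)=\langle x\xi_j,\xi_j\rangle$ on $VN(\G)$: set $w_j:=\omega*v_j*\omega$. Because $A(\G)$ is a two-sided ideal in $B(\G)$, each $w_j$ lies in $A(\G)\subseteq B_r(\G)$ and is a state on $C^*(\G)$. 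Coassociativity, together with the coincidence $R^u_\omega\circ L^u_\omega=L^u_\omega\circ R^u_\omega=B^u_\omega$, yields
$$w_j(a)=v_j\bigl(B^u_\omega(a)\bigr), \qquad a\in C^*(\G).$$

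The technical heart of the argument is a cluster of identities for the universal left regular representation $\lambda^u_\G:\ell^1(\G)\to C^*(\G)$, $f\mapsto (f\otimes\id)\W_\G$. Combining $P=\lambda_{\widehat{\G}}(\omega)=(\omega\otimes\id)W_{\widehat{\G}}$ with $W_{\widehat{\G}}=\Sigma\W_\G^*$ and self-adjointness of $P$ yields $(\id\otimes\omega)\W_\G=P$, whence
$$\omega\bigl(\lambda^u_\G(f)\bigr)=f(P).$$
Applying $(\id\otimes\omega\otimes\id)$ and $(\id\otimes\id\otimes\omega)$ to the pentagon identity $(\id\otimes\Delta^u_{\widehat{\G}})\W_\G=\W_{\G,12}\W_{\G,13}$ and using $(\id\otimes\omega)\W_\G=P$ to collapse the appropriate slice produces
$$L^u_\omega\bigl(\lambda^u_\G(f)\bigr)=\lambda^u_\G(fP),\qquad R^u_\omega\bigl(\lambda^u_\G(f)\bigr)=\lambda^u_\G(Pf),$$
and therefore $B^u_\omega\bigl(\lambda^u_\G(f)\bigr)=\lambda^u_\G(PfP)$.

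Putting the two ingredients together, for $a=\lambda^u_\G(f)$, using $\Gamma_{\widehat{\G}}\circ\lambda^u_\G=\lambda_\G$ and the hypothesis,
$$w_j(a)=v_j\bigl(\lambda^u_\G(PfP)\bigr)=\bigl\langle\lambda_\G(PfP)\xi_j,\xi_j\bigr\rangle\;\longrightarrow\; f(P)=\omega(a).$$
Since $\lambda^u_\G(\ell^1(\G))$ is norm-dense in $C^*(\G)$ (it already contains $\lambda^u_\G(\ell^1_F(\G))=\C[\G]$) and $(w_j)$ is uniformly norm-bounded (being states), a standard density argument upgrades this pointwise convergence on a dense subset to weak$^*$ convergence $w_j\to\omega$ on all of $C^*(\G)$. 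This places $\omega$ in $B_r(\G)$, which gives coamenability of $N_\omega$.

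The main obstacle is the identification $B^u_\omega(\lambda^u_\G(f))=\lambda^u_\G(PfP)$: this is the only step that is not purely formal, demanding a careful slice-by-slice calculation with the universal multiplicative unitary and the pentagon relation to reduce the two-sided convolution $\omega*\,\cdot\,*\omega$ to the bi-module action $P\,\cdot\,P$ on $\ell^1(\G)$. Once this identification is in place, the remaining ingredients -- the ideal property of $A(\G)\subseteq B(\G)$, weak$^*$-closedness of $B_r(\G)$ in $B(\G)$, and density of $\lambda^u_\G(\ell^1(\G))$ -- are standard.
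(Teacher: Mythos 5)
Your argument is correct, but it reaches the key conclusion $\omega\in B_r(\G)$ by a different mechanism than the paper, so it is worth comparing the two. The paper does not approximate $\omega$ at all: it observes that \eqref{amenveccond} yields the single norm inequality $|f(P)|\le\|\lambda_\G(PfP)\|$, so that $\lambda_\G(PfP)\mapsto f(P)$ is a contractive functional on $\lambda_\G(P\ell^1(\G)P)$, extends to $\widetilde{\epsilon_P}\in B^r_B(E_\omega)$, and then verifies on matrix coefficients that $\omega=\widetilde{\epsilon_P}\circ B_\omega$, i.e.\ $\omega$ factors through the reduced algebra. You instead symmetrize the vector states to $w_j=\omega*\omega_{\xi_j}*\omega\in A(\G)$ and use weak$^*$ closedness of $B_r(\G)$ in $B(\G)$; this is the standard ``counit approximable by normal states'' template, and the symmetrization is exactly the right device to convert $\langle\lambda_\G(PfP)\xi_j,\xi_j\rangle\to f(P)$ into $w_j\to\omega$ pointwise on the dense set $\lambda^u_\G(\ell^1(\G))\supseteq\C[\G]$, after which uniform boundedness finishes the job. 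Both routes hinge on the same structural identity $B_\omega(\lambda_\G(f))=\lambda_\G(PfP)$; for the step you flag as the technical heart, note that you do not actually need the semi-universal fundamental unitary or the pentagon relation: since everything in sight is bounded and $\lambda^u_\G(\ell^1_F(\G))=\C[\G]$ is dense, it suffices to check the identity for $f=\delta^\pi_{i,j}$ using $\lambda_\G(\delta^\pi_{i,j})=(u^\pi_{j,i})^*$, $\Delta_{\widehat{\G}}(u^\pi_{i,j})=\sum_t u^\pi_{i,t}\otimes u^\pi_{t,j}$ and the formula for $\omega$ on matrix coefficients from \cite[Lemma 4.7]{AS21} --- a finite computation, which is essentially what the paper's proof does. (In that computation one finds $L_\omega\lambda_\G(f)=\lambda_\G(Pf)$ and $R_\omega\lambda_\G(f)=\lambda_\G(fP)$ with the paper's conventions, the reverse of what you wrote, but the composite $B_\omega\lambda_\G(f)=\lambda_\G(PfP)$ is unaffected.) The trade-off: the paper's proof isolates the weaker hypothesis $|f(P)|\le\|\lambda_\G(PfP)\|_r$ as all that is needed, while yours stays closer to the classical Leptin-type argument and avoids introducing the auxiliary algebra $C^r_B(E_\omega)$ and its dual.
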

\begin{proof}
    The proof follows from a similar statement in the proof that amenability of $\G$ implies coamenability of $\widehat{\G}$ (cf. \cite[Theorem 3.15]{B16}). Consider
    $$\epsilon_P : P\ell^1(\G)P\to \C, ~ f\mapsto f(1).$$
    Then \eqref{amenveccond} tells us $|f(P)| \leq ||\lambda_\G(PfP)||$ for every $f\in \ell^1(\G)$, so $\epsilon_{\widetilde{N}_P}$ extends to a functional $\widetilde{\epsilon_P}\in M^r_B(\widehat{E}_\omega)$. For each $\pi\in Irr(\widehat{\G})$, using \cite[Lemma 4.7]{AS22}, we can choose an ONB $\{e_j^\pi\}$ of $\fH_\pi$ that diagonalizes $\pi(\omega)$. Then,
    $$\Pol(\widehat{E}_\omega) = \Span\{u_{i,j}^\pi : 1\leq i\leq n_\pi, \pi(\omega)e_j^\pi = e_j^\pi\}.$$
    Then, since $\omega\circ R_{\widehat{\G}} = \omega$ \cite[Proposition 4]{SS16}, we have $\omega\circ R_{\widehat{\G}}(u_{i,j}^\pi) = \delta_{i,j}$ if $u_{i,j}^\pi\in R_{\widehat{\G}}(\Pol(\widehat{E}_\omega))$ and zero otherwise. So, for $u^\pi_{i,j}\in \Pol(\widehat{\G})$,
    \begin{align*}
        \widetilde{\epsilon_P}\circ B_\omega(u^\pi_{i,j}) &= (\omega\otimes \widetilde{\epsilon_P}\otimes\omega)(\sum_{t,s=1}^{n_\pi}u^\pi_{i,t}\otimes u^\pi_{t,s}\otimes u^\pi_{s,j})
        \\
        &= \omega(u^\pi_{i,i})\omega(u^\pi_{j,j})\widetilde{\epsilon_P}(u_{i,j}^\pi)
        \\
        &= \omega(u^\pi_{i,i})\omega(u^\pi_{j,j})\delta_{i,j}^\pi(P)
        \\
        &= \omega(u_{i,j}^\pi).
    \end{align*}
    By density of $\Pol(\widehat{\G})$ in $C_r(\widehat{\G})$, we deduce that $\omega = \widetilde{\epsilon_P}\circ B_\omega\in M^r(\widehat{\G})$ and we apply Theorem \ref{reducedIdempState}.
\end{proof}
The following results illustrate how can we relative amenability and coamenability of coideals via Pontryagin duality. In both proofs we use an adaptation of the proof that $\G$ is amenable if and only if $\widehat{\G}$ is coamenable (due to \cite{Toma} but we follow \cite{B16}).
\begin{thm}\label{biglemmaconverse}
    Let $\G$ be a DQG and $N_\omega\subseteq L^\infty(\widehat{\G})$ a compact quasi-subgroup with $P =\lambda_{\widehat{\G}}(\omega)$. If $N_\omega$ is coamenable then $M_P$ is amenable.
\end{thm}
\begin{proof}
    Assume $N_\omega$ is coamenable. Using Lemma \ref{invVecs}, obtain a net of unit vectors $(\xi_j)\subseteq P\ell^2(\G)$ such that
    $$||W_{\widehat{\G}}(\xi_j\otimes P\eta) - \xi_j\otimes P\eta||_2\to 0.$$
    Since $W_{\widehat{\G}} = \Sigma W_\G^* \Sigma$, we have
    $$||W_\G(P\eta\otimes\xi_j) - (P\eta\otimes\xi_j)||_2\to 0.$$
    So, for $w_{\eta,\zeta} = f\in \ell^1(\G)$ and $x\in\ell^\infty(\G)$,
    \begin{align*}
        &|(Pw_{\eta,\zeta}P*w_{\xi_j}(x) - w_{\eta,\zeta}(P)w_{\xi_j}(x)|
        \\
        &\leq |\langle (1\otimes x)[W_\G (P\eta\otimes \xi_j) - (P\eta\otimes\xi_j)], W_\G(P\zeta\otimes \xi_j)\rangle|
        \\
        &+|\langle (1\otimes x) P\eta\otimes \xi_j, W_\G(P\zeta\otimes \xi_j) - P\zeta\otimes \xi_j\rangle|
        \\
        &= |\langle (1\otimes x)[W_\G (P\eta\otimes \xi_j) - (P\eta\otimes\xi_j)], W_\G(P\zeta\otimes \xi_j)\rangle|
        \\
        &+|\langle (1\otimes x) P\eta\otimes \xi_j, W_\G(P\zeta\otimes \xi_j) - P\zeta\otimes \xi_j\rangle|
        \\
        &\leq ||x||\,||P\zeta||\,||W_\G(P\eta\otimes\xi_j) - P\eta\otimes \xi_j||_2 + ||x||\,||P\eta||\,||W_\G(P\zeta\otimes \xi_j) - P\zeta\otimes \xi_j||_2
        \\
        &\to 0.
    \end{align*}
    If we let $m$ be a weak$^*$ cluster point of the net $(w_{\xi_j}|_{\ell^\infty(\G)})$, then it is straightforward to show $m$ is a state satisfying $(PfP)*m = f(P)m$ for all $f\in\ell^1(\G)$. Finally, since $(\xi_j)\subseteq P\ell^2(\G)$, $w_{\xi_j}(P) = 1$ for all $j$, so $m(P) = 1$. Using Proposition \ref{RelAmenIffAmenMP} and Theorem \ref{CondRelAmenMP}, we deduce that $M_P$ is amenable.
\end{proof}
It follows from the work of \cite{Kal1} that the central group-like projections in $\ell^\infty(\G)$ are in one-to-one correspondence with the quantum subgroups of $\G$, which are in one-to-one correspondence with the central idempotent states in $C_u(\widehat{\G})^*$. Here, we then have that $\widetilde{N}_P = \ell^\infty(\G/\H)$ where $\H$ is a quantum subgroup of $\G$ and $N_\omega = L^\infty(\widehat{\H})$ (see the proof of Corollary \ref{redcenidemps} for a full justification).
\begin{lem}\label{BigLemma}
    Let $\G$ be a DQG and $N_\omega\subseteq L^\infty(\widehat{\G})$ a compact quasi-subgroup such that $\omega$ is central. Denote $P =\lambda_{\widehat{\G}}(\omega)$. If $\widetilde{N}_P$ is relatively amenable then $N_\omega$ is coamenable.
\end{lem}
\begin{proof}
    The proof follows with very few changes to the proof in \cite{BV02}. We give a sketch for the benefit of the reader nonetheless. Let $m : \ell^\infty(\G)\to\C$ be a $P$-left invariant state. Using Lemma \ref{ApproxInv} and that $\ell^\infty(\G)$ is in standard form, we can find a net of unit vectors $(\xi_\alpha)\subseteq \ell^2(\G)$ such that $(w_{\xi_\alpha}|_{\ell^\infty(\G)})$ weak$^*$ approximates $m$, so that we have
    $$||Pf*w_{\xi_\alpha} - f(P)w_{\xi_\alpha}||_1\to 0, ~ f\in \ell^1(\G).$$
    
    Note that since $P$ is central, we either have $P_\pi = I_{n_\pi}$ or $P_\pi = 0$. When $P_\pi = 0$,
    $$||\lambda_\G(P_\pi f_\pi P_\pi)\xi_\alpha - f_\pi(P_\pi)\xi_\alpha||_2 = 0$$
    for every $f_\pi\in (M_{n_\pi})_*$
    
    Assume $P_\pi=  1_{n_\pi}$. Here the proof follows exactly as in \cite{BV02}, so we only give a sketch. Define the functionals $\mu_\alpha, \eta_\alpha\in (M_{n_\pi}(\ell^\infty(\G)))_*$ by setting
    \begin{align*}
        \eta_\alpha(x) = (\tr\otimes w_{\xi_\alpha})\left(x\right) = (\tr\otimes w_{\xi_\alpha})\left(x\right), ~ x = [x_{m,n}]\in M_{n_\pi}(\ell^\infty(\G))
    \end{align*}
    and
    \begin{align*}
        \mu_\alpha(x) = \sum_{n,m}^{n_\pi} \delta^\pi_{m,n}*w_{\xi_\alpha}(x_{m,n}), ~x = [x_{m,n}]\in M_{n_\pi}(\ell^\infty(\G)).
    \end{align*}
    As in \cite{BV02}, we find that $\eta_\alpha$ and $\mu_\alpha$ are positive functionals, and after some straightforward (but tedious) computations,
    \begin{align*}
        ||(F_\pi^{1/2} \otimes 1)\mu_\alpha(F_\pi^{1/2} \otimes 1) - (F_\pi^{1/2} \otimes 1)\eta_\alpha(F_\pi^{1/2} \otimes 1)||_{(M_{n_\pi}(\ell^\infty(\G)))_*}\to 0
    \end{align*}
    where $F_\pi$ is the $F$-matrix associated with $\pi$. It was shown in \cite{BV02} that
    \begin{align*}
        F_\pi^{1/2} \otimes\xi_\alpha\in L^2(M_{n_\pi})\otimes_2 \ell^2(\G)
    \end{align*}
    and
    \begin{align*}
        (F_\pi^{1/2}\otimes 1)[\lambda_\G(\delta_{k,l}^\pi)\xi_\alpha]_{l,k}\in L^2(M_{n_\pi})\otimes_2 \ell^2(\G)
    \end{align*}
    lie in the positive cone of $L^2(M_{M_{n_\pi}})\otimes_2 \ell^2(\G)$ and that
    \begin{align*}
        w_{F_\pi^{1/2} \otimes\xi_\alpha} = (F_\pi^{1/2} \otimes 1)\eta_\alpha(F_\pi^{1/2}\otimes 1)
    \end{align*}
    and
    \begin{align*}
        w_{(F_\pi^{1/2}\otimes 1)[\lambda_\G(\delta_{k,l}^\pi)\xi_\alpha]_{l,k}} = (F_\pi^{1/2} \otimes 1)\mu_\alpha(F_\pi^{1/2}\otimes 1).
    \end{align*}
    Using the Powers-St\o rmer inequality (cf. \cite{H75}), we have
    \begin{align*}
        &||(F_\pi^{1/2}\otimes 1)[\lambda(\delta_{k,l}^\pi)\xi_\alpha]_{l,k} - F_\pi^{1/2}\otimes\xi_\alpha ||_{L^2(M_{n_\pi})\otimes_2\ell^2(\G)} \to 0.
    \end{align*}
    Thus we deduce the following limit
    \begin{align*}
        &||[\lambda(\delta_{k,l}^\pi)\xi_\alpha]_{l,k} - 1_{n_\pi}\otimes\xi_\alpha ||_{L^2(M_{n_\pi})\otimes_2\ell^2(\G)} \to 0.
    \end{align*}
    Then, for $\sum \alpha_{i,j}\delta^\pi_{i,j} = f_\pi\in (M_{n_\pi})_*\subseteq\ell^1(\G)$
    \begin{align*}
        ||\lambda_\G(1_{n_\pi}f_\pi 1_{n_\pi})\xi_\alpha - f_\pi(1_{n_\pi})\xi_\alpha||_2 = ||(f_\pi^t\otimes\id)([\lambda_\G(\delta_{k,l}^\pi)\xi_\alpha]_{l,k} - 1_{n_\pi}\otimes\xi_\alpha)||_2\to 0.
    \end{align*}
    where $f^t_\pi = \sum \alpha_{l,k}\delta_{k,l}^\pi$.
    
    Density of $\oplus_{\pi\in Irr(\widehat{\G})}M_{n_\pi}$ in $\ell^\infty(\G)$ tells us that $||\lambda_\G(PfP)\xi_\alpha - f(P)\xi_\alpha||_2\to 0$ for all $f\in\ell^1(\G)$. Then Lemma \ref{MainThmLem} tells us $N_\omega$ is coamenable.
\end{proof}
We showed in the proof of Lemma \ref{BigLemma} that if $\widetilde{N}_P = M_P$, then coamenability of $N_\omega$ implies amenability of $\widetilde{N}_P$. This occurs in the particular case where $P$ is central, so that $\ell^\infty(\G/\H) = \widetilde{N}_P$ for some quantum subgroup $\H\leq\G$. Therefore, Lemma \ref{BigLemma} and Theorem \ref{biglemmaconverse} give us the following.
\begin{cor}\label{RelAmenIffAmenQS}
    Let $\G$ be a DQG and $\H\leq \G$ a quantum subgroup. Then $\ell^\infty(\G/\H)$ is relatively amenable if and only if it is amenable.
\end{cor}
\begin{rem}\label{LemsRemark}
\begin{enumerate}
    \item Let $\G$ be a DQG and $\H$ a quantum subgroup. While Kalantar et al{.} \cite{KKSV22} first achieved a characterization of amenability of $\H$ with relative amenability of $\ell^\infty(\G/\H)$, they did not provide any results on the {\it amenability} of {\it coideals}. In particular, Corollary \ref{RelAmenIffAmenQS} is new.
    
    \item Let us maintain the same notation as in Lemma \ref{BigLemma} and the paragraph above it. Since $P$ is central, $M_P = \widetilde{N}_P = \ell^\infty(\G/\H)$. Using the definition of coamenability of $N_\omega = L^\infty(\widehat{\H})$, it is not too difficult to prove that coamenability of $L^\infty(\widehat{\H})$ as a coideal is equivalent to coamenability of $\widehat{\H}$ as a CQG (see the proof of Corollary \ref{redcenidemps}). Thus we have given a proof that $\ell^\infty(\G/\H)$ is relatively amenable if and only if $\widehat{\H}$ is coamenable using the techniques of Vaes and Blanchard \cite{BV02} for Tomatsu's theorem \cite{Toma}, and our work on coamenable compact quasi-subgroups in Section $4.2$. Another application of Tomatsu's theorem gives us that $\ell^\infty(\G/\H)$ is relatively amenable if and only if $\H$ is amenable. Thus, we have found a different way to obtain \cite[Theorem 3.7]{KKSV22}. In their work, they use the natural action of $\H$ on $\ell^\infty(\G)$ and achieve their result by working with amenability of $\H$. In our work, we use the `dual side' of amenability, and work with coamenability of $\widehat{\H}$ instead. In Section $4.4$, we expand on this equivalence of relative amenability of $\ell^\infty(\G/\H)$ with amenability of $\H$ (see also Remark \ref{AmenRelAmenHRemark}).
\end{enumerate}
\end{rem}

\subsection{Amenability of Quantum Subgroups}
Given a DQG $\G$ and quantum subgroup $\H$, we will show that amenability and relative amenability of $\ell^\infty(\G/\H)$ characterizes amenability of $\H$. Since the group-like projection $1_\H\in\ell^\infty(\G)$ associated with $\ell^\infty(\G/\H)$ is central (cf. \cite{Kal1}), we point out that $\ell^\infty(\G/\H) = N_{1_\H} = M_{1_\H}$.

We denote the natural bimodule action of $\ell^1(\H)$ on $\ell^1(\G)$ as follows:
$$\varphi*_\H f = (\varphi\otimes f)l_\H = (\varphi\circ\sigma_\H)*f ~ \text{and} ~ f*_\H\varphi = f*(\varphi\circ\sigma_\H), ~\varphi\in \ell^1(\H), f\in \ell^1(\G),$$
where $\sigma_\H$ and $l_\H$ are defined in Section $2$.
\begin{defn}
    We will say $m\in\ell^\infty(\G)^*$ is {\bf $\H$-invariant} if
    $$\varphi(\sigma_\H\otimes m)\circ\Delta_\G = \varphi*_\H m = \varphi(1)m, ~ \varphi\in\ell^1(\H).$$
\end{defn}
We can immediately characterize the $\H$-invariant functionals as those that the annihilate the left ideals $J^1(\G,\H)$ using our preceding work.
\begin{lem}\label{DualInv}
    A non--zero functional $\mu\in \ell^\infty(\G)^*$ is $\H$-invariant if and only if $f*\mu = 0$ for all $f\in J^1(\G,\H)$.
\end{lem}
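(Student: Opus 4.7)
The plan is to recast both conditions as properties of the slice map $E_\mu: \ell^\infty(\G) \to \ell^\infty(\G)$ defined by $E_\mu(x) = (\id \otimes \mu)\Delta_\G(x)$, which is legitimate for non-normal $\mu$ by the slice-map machinery cited in the remark opening Section 3.

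First I would unpack the annihilator condition. Writing $(f * \mu)(x) = (f \otimes \mu)\Delta_\G(x) = f(E_\mu(x))$, we see that $f * \mu = 0$ for every $f \in J^1(\G, \H)$ is exactly the assertion that $E_\mu(\ell^\infty(\G)) \subseteq J^1(\G, \H)^\perp = \ell^\infty(\G/\H)$, using that $\ell^\infty(\G/\H)$ is weak$*$ closed. Second, I would rewrite $\H$-invariance in operator form: since $\ell^1(\H)$ separates points of $\ell^\infty(\H)$, the condition $\varphi((\sigma_\H \otimes \mu)\Delta_\G(x)) = \varphi(1)\mu(x)$ for all $\varphi \in \ell^1(\H)$ is equivalent to the vector-valued identity
\[
(\sigma_\H \otimes \mu)\Delta_\G(x) = \mu(x) \cdot 1_{\ell^\infty(\H)}, \qquad x \in \ell^\infty(\G).
\]
Noting the factorization $(\sigma_\H \otimes \mu)\Delta_\G = \sigma_\H \circ E_\mu$, this says $\sigma_\H(E_\mu(x)) = \mu(x) \cdot 1$ for all $x$.

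With these reformulations, both directions collapse to a single coassociativity computation. Assuming the operator identity, I would mimic the proof of Lemma \ref{MeanCond}: coassociativity gives
\[
(\id \otimes \sigma_\H)\Delta_\G(E_\mu(x)) = (\id \otimes (\sigma_\H \otimes \mu)\Delta_\G)\Delta_\G(x) = (\id \otimes \mu)\Delta_\G(x) \otimes 1 = E_\mu(x) \otimes 1,
\]
where the middle equality uses the operator identity to replace $(\sigma_\H \otimes \mu)\Delta_\G$ by $\mu(\cdot)\cdot 1$. Thus $E_\mu(x) \in \ell^\infty(\G/\H)$. Conversely, assuming $E_\mu(\ell^\infty(\G)) \subseteq \ell^\infty(\G/\H)$, I would apply $\epsilon_\G \otimes \id$ to the identity $(\id \otimes \sigma_\H)\Delta_\G(E_\mu(x)) = E_\mu(x) \otimes 1$: the counit property $(\epsilon_\G \otimes \id)\Delta_\G = \id$ produces $\sigma_\H(E_\mu(x))$ on the left, while the right becomes $\epsilon_\G(E_\mu(x)) \cdot 1 = \mu(x) \cdot 1$, recovering the operator identity.

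The only piece requiring care is the handling of the non-normal slice $(\sigma_\H \otimes \mu)$ and its factorization as $\sigma_\H \circ E_\mu$, which is covered by the machinery referenced at the start of Section 3. Everything else is pure coassociativity and the counit identity; the nonzero hypothesis on $\mu$ plays no role in the equivalence and seems to be included only to rule out the trivial case.
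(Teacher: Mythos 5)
Your proof is correct and follows essentially the same route as the paper: the forward direction is exactly the coassociativity computation of Lemma \ref{MeanCond}, and the converse reduces, as in the paper, to identifying $\sigma_\H(E_\mu(x))$ with the scalar $\mu(x)\cdot 1$. The only cosmetic difference is that you get this identification by applying $\epsilon_\G\otimes\id$ to $l_\H(E_\mu(x)) = E_\mu(x)\otimes 1$ in one step, whereas the paper first establishes $\sigma_\H(\ell^\infty(\G/\H)) = \C 1$ and then evaluates the scalar; your remark that the non-vanishing hypothesis on $\mu$ is not needed for the equivalence is also accurate.
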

\begin{proof}
    We first claim
    \begin{align}
        \sigma_\H(\ell^\infty(\G/\H)) = \C ~ \label{Hcoset}.
    \end{align}
    Indeed, if $x\in \ell^\infty(\G/\H)$ then
    $$\Delta_\H(\sigma_\H(x)) = (\sigma_\H\otimes\sigma_\H)(\Delta_\G(x)) = \sigma_\H(x)\otimes 1,$$
    which means $\sigma_\H(x)\in \ell^\infty(\H/\H) = \C$.
    
    Now, to proceed with the proof, take $f\in J^1(\G,\H)$. Then
    \begin{align*}
        f*\mu(x) &= f\otimes \mu( \Delta_\G(x)) = f( E_{\mu}(x)) = 0
    \end{align*}
    since $E_{\mu}(x)\in \ell^\infty(\G/\H) = J^1(\G,\H)^\perp$.
    
    Conversely, because
    \begin{align*}
        f(E_{\mu}(x)) &=  f* \mu(x) = 0
    \end{align*}
    for all $f\in J^1(\G,\H)$, it follows that $E_{\mu}(x)\in \ell^\infty(\G/\H) = J^1(\G,\H)^\perp$. So, if we take $\varphi\in \ell^1(\H)$ and $x\in \ell^\infty(\G)$, then
    \begin{align*}
        \varphi *_\H \mu(x) &=  (\varphi\circ\sigma_\H)(E_\mu(x)) = \varphi(1) \overbrace{\sigma_\H(E_{\mu}(x))}^{\in\C} ~ \text{(using \eqref{Hcoset})}
        \\
        &= \varphi(1)\sigma_\H(E_{\mu}(x)) = \varphi(1) \epsilon_\G(E_{\mu}(x)) = \varphi(1)\mu(\sigma_\H(x)).
    \end{align*}
\end{proof}
Recall that $1_\H$ is the group-like projection that generates $\ell^\infty(\G/\H)$. An immediate consequence of Theorem \ref{CondRelAmen} and Lemma \ref{DualInv} is the following.
\begin{cor}\label{HInvariance1H}
    A functional $m\in \ell^1(\G)^*$ is $\H$-invariant if and only if it is left $1_\H$-invariant.
\end{cor}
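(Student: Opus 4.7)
The plan is to chain together two results already established in the paper: Lemma \ref{DualInv} and Theorem \ref{CondRelAmen} applied at $P = 1_\H$. The combined effect will be that both conditions in the statement turn out to be equivalent to the single intermediate condition $m * \ell^\infty(\G) \subseteq \ell^\infty(\G/\H)$.

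First I would use Lemma \ref{DualInv} to reformulate $\H$-invariance: $m$ is $\H$-invariant if and only if $f * m = 0$ for every $f \in J^1(\G,\H)$, i.e., if and only if $J^1(\G,\H) \subseteq Ann_L(m)$. By the standard duality recorded in Remark \ref{ModMapsAnn}, where $Ann_L(m) = (m*\ell^\infty(\G))_\perp$ and the bipolar theorem gives $(m*\ell^\infty(\G))^{\perp} {}_\perp = m*\ell^\infty(\G)$ (since this space is already weak$*$ closed), the inclusion $J^1(\G,\H) \subseteq Ann_L(m)$ is equivalent to $m*\ell^\infty(\G) \subseteq J^1(\G,\H)^\perp = \ell^\infty(\G/\H) = N_{1_\H}$.

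Next I would apply Theorem \ref{CondRelAmen} with the group-like projection $P = 1_\H$: this gives that $m*\ell^\infty(\G) \subseteq N_{1_\H}$ if and only if $\ell^1(\G)\, 1_\H \subseteq Inv_L(m)$, which is precisely the definition of $1_\H$-left invariance. Because $1_\H$ is central in $\ell^\infty(\G)$ (it generates a quotient of type coideal, cf. \cite{Kal1}), the two clauses in the definition of $P$-left invariance coincide here, so there is no ambiguity in the statement.

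I do not expect any real obstacle in this corollary, since the heavy lifting already happened in Theorem \ref{CondRelAmen} (where group-likeness of $P$ and the projection identity from Lemma \ref{ProjectionIdentity} do the work) and in Lemma \ref{DualInv} (where the fact $\sigma_\H(\ell^\infty(\G/\H)) = \C$ was used to convert annihilation by $J^1(\G,\H)$ into $\H$-invariance). The only thing one must verify carefully is that the two annihilator/preannihilator passages line up correctly, and that $1_\H$ is indeed the group-like projection generating $\ell^\infty(\G/\H)$, which is recalled in Section $2.2$.
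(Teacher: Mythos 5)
Your proof is correct and follows exactly the route the paper intends: the corollary is stated there as an immediate consequence of Theorem \ref{CondRelAmen}, read through Lemma \ref{DualInv} and the annihilator duality $Ann_L(m)^\perp = m*\ell^\infty(\G)$, $J^1(\G,\H)^\perp = \ell^\infty(\G/\H) = N_{1_\H}$ from Remark \ref{ModMapsAnn}. Your added observation that centrality of $1_\H$ makes the two clauses of $P$-left invariance coincide is a correct and worthwhile clarification.
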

\begin{rem}
    Kalantar et al{.} \cite{KKSV22} defined relative amenability of a quantum subgroup $\H$ of a DQG $\G$. They said that $\H$ is relatively amenable if there exists an $\H$-invariant state on $\ell^\infty(\G)$. They showed relative amenability of $\H$ is equivalent to amenability of $\H$ in \cite[Theorem 3.7]{KKSV22}. Corollary \ref{HInvariance1H} establishes the connection between relative amenability of $\H$ and the existence of $1_\H$-left invariant states on $\ell^\infty(\G)$.
\end{rem}
As discussed in Remark \ref{LemsRemark}, with Theorem \ref{BigLemma} and \ref{biglemmaconverse}, we have a proof of the following:
\begin{align*}
    \ell^\infty(\G/\H) ~ \text{is relatively amenable} &\iff \ell^\infty(\G/\H) ~\text{is amenable}
    \\
    &\iff \widehat{\H} ~ \text{is coamenable}
    \\
    &\iff \H~\text{is amenable (Tomatsu's theorem).}
\end{align*}
With Corollary \ref{HInvariance1H} and Theorem \ref{RelAmenPInv}, we deduce the following, which we note was proved in \cite{KKSV22}.
\begin{cor}\label{HIinvariantState}
    Let $\G$ be a DQG and $\H$ a quantum subgroup. Then $\H$ is amenable if and only if there exists an $\H$-invariant state on $\ell^\infty(\G)$.
\end{cor}
Combining the work of sections $4$ and $3$, we are able to achieve the following.
\begin{cor}\label{qsubgroupsAmen}
    Let $\G$ be a DQG and $\H$ a quantum subgroup. The following are equivalent:
    \begin{enumerate}
        \item $\H$ is amenable;
        \item $\ell^\infty(\G/\H)$ is amenable;
        \item $\ell^\infty(\G/\H)$ is relatively amenable;
        \item $J^1(\G,\H)$ has a brai;
        \item $J^1(\G,\H)$ has a brai in $\ell^1_0(\G) := \{f\in \ell^1(\G) : f(1) = 0\}$;
        \item $J^1(\G,\H)$ has a brai in $\ell^1_0(\H)$.
    \end{enumerate}
\end{cor}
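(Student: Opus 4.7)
The plan is to pivot on the centrality of $1_\H \in \ell^\infty(\G)$, which forces $N_{1_\H} = M_{1_\H} = \ell^\infty(\G/\H)$ and $J^1(M_{1_\H}) = J^1(\G,\H)$, as pointed out in the remark preceding Corollary \ref{AmenMPNP}. Granted this, Theorem \ref{IdealsAmen} applied with $P = 1_\H$ gives $(2) \iff (4)$ directly. Furthermore, since $1 \in \ell^\infty(\G/\H)$ yields $J^1(\G,\H) \subseteq \ell^1_0(\G)$ automatically, any brai in $J^1(\G,\H)$ is tautologically a brai in $\ell^1_0(\G)$, and $(4) \iff (5)$ is free.

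For the triangle $(1) \iff (2) \iff (3)$ I would argue as follows: $(2) \Rightarrow (3)$ is immediate, and $(3) \Rightarrow (2)$ is Corollary \ref{RelAmenIffAmenQS} applied with $P = 1_\H$. To link these to $(1)$, Theorem \ref{RelAmenIffAmen} recasts $(2)$ as the existence of a state $\mu \in \ell^\infty(\G)^*$ with $\mu(1_\H) \neq 0$ and $\mu * \ell^\infty(\G) \subseteq \ell^\infty(\G/\H)$; by Proposition \ref{RelAmenCoideals} this says $J^1(\G,\H) \subseteq Ann_L(\mu)$, which by Lemma \ref{DualInv} means $\mu$ is $\H$-invariant, and Proposition \ref{RelAmen} then produces amenability of $\H$. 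The reverse implication is even easier: pulling back an invariant mean on $\ell^\infty(\H)$ through $\sigma_\H$ produces an $\H$-invariant state on $\ell^\infty(\G)$ with $\mu(1_\H) = 1$, closing the circle.

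The genuinely new step is $(1) \iff (6)$; only $(1) \Rightarrow (6)$ needs work, as $(6) \Rightarrow (4)$ is tautological. The first move is to apply Theorem \ref{IdealsAmen} \emph{to $\H$ itself} with the trivial projection $P = 1 \in \ell^\infty(\H)$---so that $M_1 = \C$ and $J^1(M_1) = \ell^1_0(\H)$---to produce a brai $(g_i) \subseteq \ell^1_0(\H)$ from amenability of $\H$. I then claim $(g_i \circ \sigma_\H) \subseteq \ell^1_0(\H)$ is a brai for $J^1(\G,\H)$. A short computation using $(\sigma_\H \otimes \sigma_\H)\Delta_\G = \Delta_\H \circ \sigma_\H$ yields $(\omega \circ \sigma_\H) * (g_i \circ \sigma_\H) = (\omega * g_i) \circ \sigma_\H$ for every $\omega \in \ell^1_0(\H)$, so $f * (g_i \circ \sigma_\H) \to f$ for each $f$ in the subspace $\ell^1(\G) * (\ell^1_0(\H) \circ \sigma_\H) \subseteq J^1(\G,\H)$.

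The main obstacle is to upgrade this pointwise convergence to all of $J^1(\G,\H)$; the crux is norm density of $\ell^1(\G) * (\ell^1_0(\H) \circ \sigma_\H)$ in $J^1(\G,\H)$, which I plan to prove by Hahn--Banach. If $x \in \ell^\infty(\G)$ annihilates every $h * (\omega \circ \sigma_\H)$, then $(\id \otimes \omega)\, l_\H(x) = 0$ for every $\omega \in \ker \epsilon_\H = \ell^1_0(\H)$, which pushes the $\ell^\infty(\H)$-leg of $l_\H(x)$ into $\C \cdot 1$; slicing against $\epsilon_\H$ and using $\epsilon_\H \circ \sigma_\H = \epsilon_\G$ then forces $l_\H(x) = x \otimes 1$, so $x \in \ell^\infty(\G/\H) = J^1(\G,\H)^\perp$. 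Once density is secured, a standard $\varepsilon/3$-argument leveraging the uniform bound $\|g_i \circ \sigma_\H\| \leq \|g_i\|$ extends the brai property to all of $J^1(\G,\H)$, completing $(1) \Rightarrow (6)$.
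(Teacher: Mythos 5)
Most of your network of implications is sound, and in two places you are actually more careful than the paper: you supply a genuine link $(2)\iff(4)$ by applying Theorem \ref{IdealsAmen} with $P=1_\H$ (the paper never explicitly derives $(4)$ from anything), and your $(1)\Rightarrow(6)$ — producing a brai $(g_i)\subseteq\ell^1_0(\H)$ from amenability of $\H$, pushing it forward along $\sigma_\H$, and then proving norm density of $\ell^1(\G)*(\ell^1_0(\H)\circ\sigma_\H)$ in $J^1(\G,\H)$ by the Hahn--Banach/slice-map argument — is a correct and honest replacement for the paper's bare assertion that ``$(4)\Rightarrow(6)$ is clear''. The $(1)\iff(2)\iff(3)$ triangle you describe is exactly the paper's route (Lemma \ref{DualInv}, Proposition \ref{RelAmen}, Corollary \ref{RelAmenIffAmenQS}).

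The genuine gap is the claim that ``$(4)\iff(5)$ is free''. The observation $1\in\ell^\infty(\G/\H)$ gives $J^1(\G,\H)\subseteq\ell^1_0(\G)$ and hence only $(4)\Rightarrow(5)$: a brai whose terms lie in $J^1(\G,\H)$ automatically has terms in $\ell^1_0(\G)$. The converse is not tautological, because $\ell^1_0(\G)$ strictly contains $J^1(\G,\H)$ and $J^1(\G,\H)$ is only a \emph{left} ideal, so one cannot push a net $(e_i)\subseteq\ell^1_0(\G)$ into $J^1(\G,\H)$ by convolving ($f*e_i$ need not lie in $J^1(\G,\H)$ for $f\in J^1(\G,\H)$). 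Tracing through your implication graph, $(5)$ is then never used as a hypothesis, so you have shown $(1)\iff(2)\iff(3)\iff(4)\iff(6)\Rightarrow(5)$ but nothing out of $(5)$, and the six conditions are not proved equivalent. The paper closes this loop by a separate argument $(5)\Rightarrow(1)$: take a weak$*$ cluster point $\mu\in\ell^\infty(\G)^*$ of the brai, observe that $f*(\epsilon_\G-\mu)=0$ for all $f\in J^1(\G,\H)$ while $(\epsilon_\G-\mu)(1)=1\neq 0$, and feed $\epsilon_\G-\mu$ into Lemma \ref{DualInv} and Proposition \ref{RelAmen}. You need some argument of this kind (or any other implication with $(5)$ as hypothesis) to complete the proof.
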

\begin{proof}
    (1. $\iff$ 2. $\iff$ 3.) This is due to Lemma \ref{BigLemma}, Theorem \ref{biglemmaconverse}, and Corollary \ref{RelAmenIffAmenQS} (see the paragraph above the statement of this corollary).
    
    (1. $\implies$ 4.) This follows from Theorem \ref{IdealsAmen} after noting that $\ell^\infty(\G/\H) = M_{1_\H}$ and $J^1(\G,\H) = J^1(M_{1_\H})$ because $1_\H$ is central.
    
    (4. $\implies$ 6.) is clear.
    
    (6. $\implies$ 5.) Since $\sigma_\H$ is unital, we deduce that $\ell^1_0(\H)\circ\sigma_\H\subseteq \ell^1_0(\G)$. Then, if $(e_j)\subseteq\ell^1_0(\H)$ is a brai for $J^1(\G,\H)$, it is clear that $(e_j\circ \sigma_\H)\subseteq \ell^1_0(\G)$ is a brai for $J^1(\G,\H)$.
    
    (5. $\implies$ 3.) Let $(e_j)\subseteq\ell^1_0(\G)$ be a brai for $J^1(\G,\H)$, with weak$^*$ cluster point $\mu\in \ell^\infty(\G)^*$. Then, $f*(\epsilon_\G - \mu) = 0$ for every $f\in J^1(\G,\H)$, and so an application of Proposition \ref{DualInv} tells us $\omega = \epsilon_\G - \mu$ is a non-zero $\H$-invariant functional on $\ell^\infty(\G)$. We will manufacture an $\H$-invariant state from $\omega$. Since the decomposition $\omega = \Re(\omega) + \i\Im(\omega)$ is unique, we can assume $\omega$ is Hermitian. Also, since $\omega(1) = 1$, $\Re(\omega)(1) = 1$. Now, let $\omega = \omega_+ - \omega_1$ be the Jordan decomposition, uniquely determined so that $||\omega_+|| + ||\omega_-|| = ||\omega||$ (cf. \cite[Theorem 4.2]{Tak}). Let $\varphi\in\ell^1(\H)$ be a state. Since $\varphi*_\H\omega_+$ is positive,
    $$||\varphi*_\H \omega_+|| = (\varphi\circ\sigma_\H)*\omega_+(1) = \omega_+(1) = 1$$
    and similarly $||\varphi*_\H \omega_-|| = ||\omega_-||$. So, by uniqueness, we must have $\varphi*_\H\omega_+ = \omega_+$. Therefore, $\omega_+$ is an $\H$-invariant state. From Corollary \ref{HIinvariantState} and Theorem \ref{RelAmenPInv} we deduce that $\ell^\infty(\G/\H)$ is relatively amenable.
\end{proof}
\begin{rem}\label{AmenRelAmenHRemark}
    We must point out that Kalantar et al{.} \cite{KKSV22} achieved Corollary \ref{qsubgroupsAmen} $1\iff 3$. To obtain their result, they build an injective right $\ell^1(\G)$-module map $\ell^\infty(\H)\to \ell^\infty(\G)$, generalizing how one builds such a map $\ell^\infty(H)\to \ell^\infty(G)$ for a discrete group $G$ and subgroup $H$, using a set of representatives for the coset space $G/H$. They then used this map to establish a one-to-one correspondence between $\H$-invariant states on $\ell^\infty(\G)$ and right $\ell^1(\G)$-module maps $\ell^\infty(\G)\to \ell^\infty(\G/\H)$.
    
    In our proof, we use the correspondence between $1_\H$-invariant states on $\ell^\infty(\G)$ and right $\ell^1(\G)$-module maps $\ell^\infty(\G)\to \ell^\infty(\G/\H)$ established in Section $3$ for coideals. Then we work on the `dual side' of $\G$. We use Blanchard and Vaes' techniques in \cite{BV02} to prove that relative amenability of $\ell^\infty(\G/\H)$ is equivalent to coamenability of $\widehat{\H}$ as well as amenability of $\ell^\infty(\G/\H)$.
\end{rem}

\bibliography{AmenCoamCoideals}

\providecommand{\bysame}{\leavevmode\hbox to3em{\hrulefill}\thinspace}
\providecommand{\MR}{\relax\ifhmode\unskip\space\fi MR }
\providecommand{\MRhref}[2]{%
  \href{http://www.ams.org/mathscinet-getitem?mr=#1}{#2}
}
\providecommand{\href}[2]{#2}
\begin{thebibliography}{10}

\bibitem{AD03}
C{.} Anantharaman-Delaroche, \emph{{On Spectral Characterizations of
  Amenability}}, Israel J{.} Math{.} \textbf{137} (2003), 1--33.

\bibitem{AS22}
B{.} Anderson-Sackaney, \emph{{On Ideals of $L^1$-algebras of Compact Quantum
  Groups}}, International Journal of Mathematics \textbf{33} (2022), no.~12,
  2250074.

\bibitem{A20}
D{.} Andreou, \emph{{Crossed Products of Dual Operator Spaces and a
  Characterization of Groups With the Approximation Property}}, Journal of
  Operator Theory \textbf{89} (2023), no.~2, 521--570.

\bibitem{BV02}
E{.} Blanchard and S{.} Vaes, \emph{{A Remark on Amenability of Discrete
  Quantum Groups}}, Unplublished Manuscript (2002).

\bibitem{B16}
M{.} Brannan, \emph{{Approximation Properties for Locally Compact Quantum
  Groups}}, Banach Center Publications \textbf{111} (2017), 185--232.

\bibitem{BKKO17}
E{.} Breillard, M{.} Kalantar, M{.} Kennedy, and N{.} Ozawa,
  \emph{{$C^*$-simplicity and the Unique Trace Property for Discrete Groups}},
  Publications math\'ematiques de l'IH\'ES \textbf{126} (2017), no.~1, 35--71.

\bibitem{Cap}
P{.} Caprace and N{.} Monod, \emph{{Relative Amenability}}, Groups, Geometry,
  and Dynamics \textbf{8} (2013), 747--774.

\bibitem{D10}
M{.} Daws, \emph{{Operator Biprojectivity of Compact Quantum Groups}},
  Proceedings of the American Mathematical Society \textbf{138} (2010), no.~4,
  1349--1359.

\bibitem{Daws}
M{.} Daws, P{.} Kasprzak, A{.} Skalski, and P{.}~M{.} So{\l}tan, \emph{{Closed
  Quantum Subgroups of Locally Compact Quantum Groups}}, Advances in
  Mathematics \textbf{231} (2012), no.~6, 3473--3501.

\bibitem{D78}
A{.} Derighetti, \emph{{Some Remarks on $L^1(G)$}}, Math{.} Z{.} \textbf{164}
  (1978), 189--194.

\bibitem{Des}
P{.} Desmedt, J{.} Quagebeur, and S{.} Vaes, \emph{{Amenability and the
  Bicrossed Product Construction}}, Illinois Journal of Mathematics \textbf{46}
  (2002), no.~4, 1259--1277.

\bibitem{Enock}
M{.} Enock and J{.}-M{.} Schwartz, \emph{{Alg\`ebres de Kac Moyennables}},
  Pacific J{.} Math \textbf{125} (1986), no.~2, 363--379.

\bibitem{FK17}
R{.} Faal and P{.} Kasprzak, \emph{{Group-like Projections for Locally Compact
  Quantum Groups}}, Journal of Operator Theory \textbf{80} (2017), no.~1,
  153--166.

\bibitem{FSS07}
B{.} Forrest, E{.} Samei, and N{.} Spronk, \emph{{Convolutions on Compact
  Groups and Fourier Algebras of Coset Spaces}}, Studia Mathematica
  \textbf{196} (2007), no.~3.

\bibitem{FLS16}
U{.} Franz, H{.}H{.} Lee, and A{.} Skalski, \emph{{Integration Over the Quantum
  Diagonal Subgroup and Associated Fourier--like Algebras}}, International
  Journal of Mathematics \textbf{27} (2016), no.~9, 1650073.

\bibitem{FST13}
U{.} Franz, A{.} Skalski, and R{.} Tomatsu, \emph{{Idempotent States on Compact
  Quantum Groups and Their Classification on {$U_q(2)$}, {$SU_q(2)$}, and
  {$SO_q(3)$}}}, J{.} Noncommut{.} Geom{.} \textbf{7} (2013), 221--254.

\bibitem{H75}
U{.} Haagerup, \emph{{The Standard Form of von Neumann Algebras}}, Math{.}
  Scand{.} \textbf{37} (1975), no.~2, 271--283.

\bibitem{HNR12}
Z{.} Hu, M{.} Neufang, and Z{.}-J{.} Ruan, \emph{{Module Maps Over Locally
  Compact Quantum Groups}}, Studia Mathematica \textbf{211} (2012), 111--145.

\bibitem{JNR09}
M{.} Junge, M{.} Neufang, and Z{.}-J{.} Ruan, \emph{{A Representation Theorem
  for Locally Compact Quantum Groups}}, Internat{.} J{.} Math{.} \textbf{20}
  (2009), no.~3, 377--400.

\bibitem{Kal1}
M{.} Kalantar, P{.} Kasprzak, and A{.} Skalski, \emph{{Open Quantum Subgroups
  of Locally Compact Quantum Groups}}, Advances in Mathematics \textbf{303}
  (2016), 322--359.

\bibitem{KKSV22}
M{.} Kalantar, P{.} Kasprzak, A{.} Skalski, and R{.} Vergnioux,
  \emph{{Noncommutative Furstenberg Boundary}}, Analysis \& PDE \textbf{15}
  (2022), no.~3, 795–842.

\bibitem{KK14}
M{.} Kalantar and M{.} Kennedy, \emph{{Boundaries of Reduced $C^*$-algebras of
  Discrete Groups}}, Journal f\.ur die reine und angewandte Mathematik
  \textbf{2017} (2014), no.~727, 247--267.

\bibitem{K18}
P{.} Kasprzak, \emph{{Shifts of Group--Like Projections and Contractive
  Idempotent Functionals for Locally Compact Quantum Groups}}, International
  Journal of Mathematics \textbf{29} (2018), no.~2, 1850092.

\bibitem{KK17}
P{.} Kasprzak and F{.} Khosravi, \emph{{Coideals, Quantum Subgroups and
  Idempotent States}}, The Quarterly Journal of Mathematics \textbf{68} (2017),
  no.~2, 583--615.

\bibitem{KS12}
P{.} Kasprzak and P{.} So{\l}tan, \emph{{Embeddable Quantum Homogeneous
  Spaces}}, Journal of Mathematical Analysis and Applications \textbf{411}
  (2012), no.~2, 574--591.

\bibitem{Soltan}
\bysame, \emph{{The Lattice of Idempotent States of a Locally Compact Quantum
  Group}}, Publications of the Research Institute for Mathematical Sciences
  \textbf{56} (2018), no.~1, 33–53.

\bibitem{K01}
J{.} Kustermans, \emph{{Locally Compact Quantum Groups in the Universal
  Setting}}, International Journal of Mathematics \textbf{12} (2001), no.~3.

\bibitem{KV00}
J{.} Kustermans and S{.} Vaes, \emph{{Locally Compact Quantum Groups}}, Ann{.}
  Sci{.} {\'E}c{.} Norm{.} Sup{\'e}r \textbf{33} (2000), no.~6, 837--934.

\bibitem{Neuf}
M{.} Neufang, \emph{{Amplification of Completely Bounded Operators and
  Tomiyama's slice maps}}, J{.} Funct{.} Anal{.} \textbf{207} (2000), no.~2,
  300--329.

\bibitem{Curtis}
Jr~P{.} C{.}~Curtis and R{.}~L{.} Roy, \emph{{The Structure of Amenable Banach
  Algebras}}, J{.} London Math{.} Soc{.} \textbf{40} (1989), no.~2, 889--104.

\bibitem{R71}
Hans Reiter, \emph{{$L^1$-algebras and Segal Algebras}}, Lecture Notes in
  Mathematics \textbf{231} (1971).

\bibitem{R96}
Z{.}-J{.} Ruan, \emph{{Amenability of Hopf von Neumann Algebras and Kac
  Algebras}}, Journal of Functional Analysis \textbf{139} (1996), no.~2,
  466--499.

\bibitem{R08}
V{.} Runde, \emph{{Characterizations of Compact and Discrete Quantum Groups
  Through Second Duals}}, Journal of Operator Theory \textbf{60} (2008), no.~2,
  415--428.

\bibitem{SS11}
P{.} Salmi and A{.} Skalski, \emph{{Idempotent States on Locally Compact
  Quantum Groups}}, The Quarterly Journal of Mathematics \textbf{63} (2011),
  no.~4, 1009--1032.

\bibitem{SS16}
\bysame, \emph{{Idempotent States on Locally Compact Quantum Groups II}}, The
  Quarterly Journal of Mathematics \textbf{68} (2017), 421--431.

\bibitem{NY14}
S{.}Neshveyev and M{.} Yamashita, \emph{{Categorical Duality for
  {Y}etter-{D}rinfeld Algebras}}, Doc{.} Math{.} \textbf{19} (2014),
  1105--1140.

\bibitem{T69}
M{.} Takesaki, \emph{{A Characterization of Group Algebras as a Converse of
  {Tannaka-Stinespring-Tatsuuma} Duality Theorem}}, Amer{.} J{.} Math{.}
  \textbf{91} (1969), 529--564.

\bibitem{Tak}
\bysame, \emph{{Theory of Operator Algebras I}}, Operator Algebras and
  Non--Commutative Geometry \textbf{V} (1979).

\bibitem{Tim}
T{.} Timmermann, \emph{{An Invitation to Quantum Groups and Duality}}, European
  Mathematical Society (2008).

\bibitem{Toma}
R{.} Tomatsu, \emph{{Amenable Discrete Quantum Groups}}, Journal of the
  Mathematical Society of Japan \textbf{58} (2006), no.~4, 949--964.

\bibitem{T07}
\bysame, \emph{{A Characterization of Right Coideals of Quotient Type and its
  Application to Classification of Poisson Boundaries}}, Communications in
  Mathematical Physics \textbf{275} (2007), 271--296.

\bibitem{Wang16}
S{.} Wang, \emph{{Some Problems in Harmonic Analysis on Quantum Groups}}, Ph.D
  Thesis, Universit\'e de Franche-Comt\'e - \'Ecole Doctorale Carnot-Pasteur
  and Institute of Mathematics, Polish Academy of Sciences, Besan\c{c}on,
  France and Warszawn, Poland (2016).

\bibitem{Wor}
S{.}~L{.} Woronowicz, \emph{{Compact Quantum Groups}}, Sym{\'e}tries quantiques
  (Les Houches, 1995), Amsterdam (1998), 845--884.

\end{thebibliography}
\bibliographystyle{amsplain}

\end{document}